\documentclass[11pt]{amsart}


\issueinfo{00}{0}{Month}{Year}
\copyrightinfo{2008}{University of Calgary}
\pagespan{1}{2}
\PII{ISSN 1715-0868}


\usepackage{graphicx}

\usepackage{amssymb}
\usepackage{hyperref}
\usepackage[capitalize]{cleveref}

\renewcommand{\MR}[1]{\href{http://www.ams.org/mathscinet-getitem?mr=#1}{MR#1}}

\numberwithin{equation}{section}

\theoremstyle{plain}
\newtheorem{thm}[equation]{Theorem}
\newtheorem{prop}[equation]{Proposition}
\newtheorem{cor}[equation]{Corollary}
\newtheorem{lem}[equation]{Lemma}

\newtheorem{problem}[equation]{Problem}

\crefmultiformat{thm}{Theorems~#2#1#3}{ and~#2#1#3}{, #2#1#3}{, and~#2#1#3}

\Crefname{cor}{Corollary}{Corollaries}
\Crefname{lem}{Lemma}{Lemmas}
\Crefname{prop}{Proposition}{Propositions}
\Crefname{thm}{Theorem}{Theorems}

\theoremstyle{definition}
\newtheorem{notation}[equation]{Notation}
\newtheorem{defn}[equation]{Definition}

\theoremstyle{definition}
\newtheorem{rem}[equation]{Remark}

\newtheorem{warn}[equation]{Warning}

\crefformat{rems}{Remark~#2#1#3}

\Crefname{figure}{Figure}{Figures}
\crefmultiformat{figure}{Figures~#2#1#3}{ and~#2#1#3}{, #2#1#3}{, and~#2#1#3}

\Crefname{page}{page}{pages}

 \newcounter{case}
 \newenvironment{case}[1][\unskip]{\refstepcounter{case}\bf
 \medskip \noindent Case \thecase\ #1.\normalfont\em\ }{\unskip\upshape}
 \renewcommand{\thecase}{\arabic{case}}
 \crefformat{case}{Case~#2#1#3}

 \newcounter{subcase}
 \newenvironment{subcase}[1][\unskip]{\refstepcounter{subcase}\bf
 \medskip \noindent \hskip2\parindent Subcase \thesubcase\ #1. \it}{\unskip\upshape}
\numberwithin{subcase}{case}
\crefformat{subcase}{Subcase~#2#1#3}
\crefname{subcase}{subcase}{subcases}
\Crefname{subcase}{Subcase}{Subcases}

 \renewcommand{\Cref}{\cref}
 \newcommand{\pref}[1]{(\ref{#1})}
 \newcommand{\fullref}[2]{\ref{#1}\pref{#1-#2}}
 \newcommand{\fullcref}[2]{\cref{#1}\pref{#1-#2}}
 
 \newcommand{\fullCref}[2]{\Cref{#1}\pref{#1-#2}}
\newcommand{\csee}[1]{(see \cref{#1})}
\newcommand{\fullcsee}[2]{(see \fullcref{#1}{#2})}

\newcommand{\boardonly}{\mathcal{B}}
\newcommand{\board}[2]{\boardonly_{#1,#2}}
\newcommand{\Bmn}{\board{m}{n}}
\newcommand{\subdiag}{\mathord{S}}
\newcommand{\hampath}{\mathcal{H}}
\newcommand{\init}{\iota}
\newcommand{\term}{\tau}

\newcommand{\mf}{\mathord{\mathchoice
	{\hbox to 0pt{\vrule width 9.5pt height 2.75pt depth -1.95pt\hss}m}%
	{\hbox to 0pt{\vrule width 9.5pt height 2.75pt depth -1.95pt\hss}m}%
	{\hbox to 0pt{\vrule width 7.5pt height 2pt depth -1.6pt\hss}m}%
	{\hbox to 0pt{\vrule width 6.5pt height 1.6pt depth -1.1pt\hss}m}}}
\newcommand{\mm}{\mathord{\mf^-}}
\renewcommand{\mp}{\mathord{\mf^+}}
\newcommand{\nf}{\mathord{\mathchoice
	{\hbox to 0pt{\vrule width 6.5pt height 2.75pt depth -1.95pt\hss}n}%
	{\hbox to 0pt{\vrule width 6.5pt height 2.75pt depth -1.95pt\hss}n}%
	{\hbox to 0pt{\vrule width 5pt height 2pt depth -1.6pt\hss}n}%
	{\hbox to 0pt{\vrule width 4.3pt height 1.6pt depth -1.1pt\hss}n}}}

\newcommand{\nm}{\mathord{\nf^-}}
\newcommand{\np}{\mathord{\nf^+}}

\newcommand{\NN}{\mathbb{N}}
\newcommand{\ZZ}{\mathbb{Z}}

\makeatletter
\newcommand{\noprelistbreak}{\smallskip\@nobreaktrue\nopagebreak} 
\makeatother

\usepackage{color}
\setlength{\marginparwidth}{0.6in}
\newcommand{\refnote}[1]{%
	\marginpar{%
		\color{blue}
		\vbox to 0pt{\vss
		$\begin{pmatrix} \hbox{note} \\ \hbox{\ref{#1}} \end{pmatrix}$%
		\vskip -11pt}}}
\newcommand{\refnotelower}[1]{%
	\marginpar{%
		\color{blue}
		\vbox to 0pt{\vskip 2pt
		$\begin{pmatrix} \hbox{note} \\ \hbox{\ref{#1}} \end{pmatrix}$%
		\vss}}}
\theoremstyle{definition}
\newtheorem{aid}{}
\newcommand{\oldendaid}{}
\let\oldendaid=\endaid
\renewcommand{\endaid}{\oldendaid\bigskip\hrule width\textwidth \bigbreak}
\numberwithin{aid}{section}

\begin{document}

\title[Hamiltonian paths in $m \times n$ projective checkerboards]%
{Hamiltonian paths in $m \times n$ \\ projective checkerboards}

\author{Dallan McCarthy and Dave Witte Morris}
\address{Department of Mathematics and Computer Science,
University of Lethbridge, Lethbridge, Alberta, T1K~6R4, Canada}
\email{mccarthyd@uleth.ca}
{\catcode`~ = 12 
\email{Dave.Morris@uleth.ca, http://people.uleth.ca/~dave.morris/}
}

\date{\today.} 
\subjclass[2000]{%
05C20, 
05C45
}
\keywords{hamiltonian path, directed graph, projective plane, checker\-board, chessboard}

\begin{abstract}
For any two squares $\init$ and~$\term$ of an $m \times n$ checkerboard, we determine whether it is possible to move a checker through a route that starts at~$\init$, ends at~$\term$, and visits each square of the board exactly once. Each step of the route moves to an adjacent square, either to the east or to the north, and may step off the edge of the board in a manner corresponding to the usual construction of a projective plane by applying a twist when gluing opposite sides of a rectangle. 
This generalizes work of M.\,H.\,Forbush et al.\ for the special case where $m = n$.
\end{abstract}

{
\mathversion{bold} 
\maketitle
}

\section{Introduction}

Place a checker in any square of an $m \times n$ checkerboard (or chessboard). We determine whether it is possible for the checker to move through the board, visiting each square exactly once. (In graph-theoretic terminology, we determine whether there is a hamiltonian path that starts at the given square.) Although other rules are also of interest (such as the well-known knight moves discussed in \cite{WatkinsBook} and elsewhere), we require each step of the checker to move to an adjacent square that is either directly to the east or directly to the north, except that we allow the checker to step off the edge of the board. 

Torus-shaped checkerboards are already understood (see, for example, \cite{GallianWitte-HamCbds}), so we allow the checker to step off the edge of the board in a manner that corresponds to the usual procedure for creating a projective plane, by applying a twist when gluing each edge of a rectangle to the opposite edge:

\begin{defn}[cf.\ {\cite[Defn.~1.1]{Forbush}}]
 The squares of an $m \times n$ checkerboard can be naturally identified
with the set $\ZZ_m \times \ZZ_n$ of ordered pairs $(p,q)$ of integers with $0 \le p \le m-1$ and $0 \le q \le n-1$.
 Define $E \colon \ZZ_m \times \ZZ_n \to \ZZ_m \times \ZZ_n$ and $N \colon \ZZ_m \times \ZZ_n \to \ZZ_m \times \ZZ_n$ by
 \begin{align*}
 (p,q)E &= 
  \begin{cases}
 (p+1,q)& \text{if $p<m-1$}\\
 (0,n-1-q)& \text{if $p = m-1$}\\
 \end{cases}
 \\
 \intertext{and}
 (p,q)N &= 
 \begin{cases}
 (p,q+1)& \text{if $q<n-1$} \\
 (m-1-p,0)& \text{if $q = n-1$}
 . \end{cases}
 \end{align*}
 The  \emph{$m \times n$ projective checkerboard}~$\Bmn$ is the digraph whose vertex
set is $\ZZ_m \times \ZZ_n$, with a directed edge from~$\sigma$ to~$\sigma E$ and from~$\sigma$
to~$\sigma N$, for each $\sigma \in \ZZ_m \times \ZZ_n$. We usually refer to the vertices
of~$\Bmn$ as  \emph{squares}.
 \end{defn}

In a projective checkerboard~$\Bmn$ (with $m,n \ge 3$), only certain squares can be
the initial square of a hamiltonian path, and only certain squares can be the
terminal square. A precise determination of these squares was found by M.\,H.\,Forbush et al.\ \cite{Forbush} in the special
case where $m = n$ (that is, when the checkerboard is square, rather than
properly rectangular).
In this paper, we find both the initial squares and the terminal squares in the general case. (Illustrative examples appear in \cref{Initmx5Fig,Initmx10Fig,Termmx5Fig,Termmx10Fig} on \cpageref{Initmx5Fig,Initmx10Fig,Termmx5Fig,Termmx10Fig}.)

\begin{notation}
 For convenience, let 
 	\begin{align*}
 	\mf &= \lfloor m/2 \rfloor,
	&  
	\mm &= \lfloor (m-1)/2 \rfloor,
	&
	\mp &= \lfloor (m+1)/2 \rfloor = \lceil m/2 \rceil = \mm + 1, 
	\\
	\nf &= \lfloor n/2 \rfloor,
	&
	\nm &= \lfloor (n-1)/2 \rfloor, 
	& 
	\np &= \lfloor (n+1)/2 \rfloor \ = \lceil n/2 \rceil \ = \nm + 1
	. \end{align*}
 \end{notation} 
 
\begin{thm} \label{InitSquares}
Assume $m \ge n \ge 3$. There is a hamiltonian path in $\Bmn$ whose initial square is $(p,q)$ if and only if either:
\noprelistbreak
	\begin{enumerate} \itemsep=\smallskipamount 
	
	\item $p = 0$ and $\nm \le q \le n-1$,
	or
	
	\item $\nm \le p \le m-1$ and $q = 0$,
	or
	
	\item $\mp \le p \le m-1$ and $q = \nf$,
	or
	
	\item $0 \le p \le \nf$ and $q = \nm$,
	or
	
	\item $\mf \le p \le m - \np$ and $\nf + 1 \le q \le n-1$,
	or
	
	\item $\nm \le p \le \mm$ and $0 \le q \le \nf$.

	\end{enumerate}
\end{thm}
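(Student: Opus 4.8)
The plan is to prove the two directions of the biconditional separately: for ``if'', construct an explicit hamiltonian path of $\Bmn$ beginning at each square in one of the six regions; for ``only if'', rule out every other square by an edge-boundary (cut) argument. Before either step I would record the relevant symmetry and local structure. The $180^\circ$ rotation $R\colon(p,q)\mapsto(m-1-p,n-1-q)$ conjugates $E$ to~$E^{-1}$ and $N$ to~$N^{-1}$, hence is an isomorphism of $\Bmn$ onto its reverse digraph; this shows a square $(p,q)$ is the \emph{terminal} square of some hamiltonian path exactly when $(m-1-p,n-1-q)$ is an initial square, which is how the present theorem feeds the companion statement on terminal squares. (Since $m\neq n$ in general there is no coordinate swap, so the six regions genuinely are not symmetric in $p$ and~$q$.) Two further structural facts are worth isolating: both out-edges of $(m-1,n-1)$ land on $(0,0)$ and both in-edges of $(0,0)$ come from $(m-1,n-1)$; and when $n$ is odd the middle row $q=\nm$ is by itself a cycle of length~$m$ in the $E$-edges alone, while when $m$ is odd the middle column $p=\mm$ is a cycle of length~$n$ in the $N$-edges alone. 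These corners and cycles are exactly the features one must work around.

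For the ``if'' direction the workhorse is a small library of reusable gadgets: a ``snake'' hamiltonian path through a block of consecutive columns (or rows) that enters and leaves at prescribed boundary squares, turn-arounds, and — crucially — transitions across the two twisted portals $(m-1,q)\mapsto(0,n-1-q)$ and $(p,n-1)\mapsto(m-1-p,0)$, which is precisely what prevents a single snake of the whole board from working. A hamiltonian path of $\Bmn$ is then assembled as a sequence of snake-sweeps glued at these portals and at ordinary edges, the start square $\init$ dictating in which sweep, and at which point within it, the path begins. I expect this to be organized into a bounded number of explicit families indexed by the parities of $m$ and~$n$, together with a finite list of small boards verified directly (or by computer), and that the ``two-dimensional'' regions~(5) and~(6) will require the most care, since there $\init$ sits in the interior of a sweep rather than at an end.

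For the ``only if'' direction the plan is the cut argument. Given a hamiltonian path $\hampath$ from $\init$ to~$\term$ and a set $X$ of squares, $\hampath$ meets $X$ in some number $k$ of vertex-disjoint subpaths, of which $k-[\init\in X]$ are entered by an edge of $V\setminus X$ and $k-[\term\in X]$ are left by an edge to $V\setminus X$; comparing these counts with the actual edges crossing $\partial X$ forces constraints on the positions of $\init$ and~$\term$. The sets I would use are ``subdiagonal'' bands~$\subdiag$: regions whose boundary is a short staircase running between the two twisted portals, arranged so that all but a few of the wrap-edges are internal to $X$ or to its complement. For such an~$X$ the only edges that can cross $\partial X$ lie on that staircase or among a handful of portal edges, so the inequalities above become tight and confine $\init$ to a short arc; running this for several choices of $\subdiag$ together with their images under~$R$, and splitting on the parities of $m$ and~$n$ (the exceptional middle row/column above needing separate treatment), should whittle the admissible positions of~$\init$ down to precisely the union of the six regions, matching the constructions.

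The step I expect to be the main obstacle is this last one: finding subdiagonal cut-sets whose boundaries are restrictive enough to exclude exactly the complement of the six regions, while simultaneously handling the four parity cases and the special pure cycles on the middle row and column. The ``if'' direction is far longer but essentially routine once the snake-and-portal gadgets are in place; its only real danger is that the bookkeeping over (region)~$\times$~(parity)~$\times$~(which boundary square serves as the port) becomes unwieldy, which a fixed normal form for the sweeps, plus the rotation~$R$, should keep under control.
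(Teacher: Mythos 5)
This is a plan rather than a proof, and the part you yourself flag as the main obstacle --- the ``only if'' direction --- is exactly where the proposal is missing the idea that makes the problem tractable. The engine of the paper's necessity argument is not an edge-cut count but the direction-forcing property of diagonals (\cref{DiagMustTravel}): if $\sigma N=\rho=\tau E$ and $\rho$ is not the initial square, then $\rho$ has exactly one in-edge in the path, so $\sigma$ travels north iff $\tau$ travels north; chaining this around a diagonal $\subdiag_i\cup\subdiag_{m+n-3-i}$ forces every square of each non-terminal diagonal to travel the same way, and hence (\cref{uniqueness}) a hamiltonian path is \emph{uniquely determined} by its endpoints together with the east/north assignment of its non-terminal diagonals. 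Necessity is then proved by showing that every forbidden choice of data forces the resulting spanning subdigraph to contain a closed cycle. A cut inequality on a band $X$ only bounds the number of path-edges crossing $\partial X$ by the number of digraph edges crossing it, which is a far coarser, in-degree-$2$/out-degree-$2$ counting statement; it cannot see which of the two in-edges of a given square is used, and so cannot by itself carve out regions whose boundaries involve $\mf$, $\nm$, $\np$ and the interaction between the initial square and the east/north pattern. You give no candidate band for which the inequality is tight enough to exclude even one specific non-initial square, so this direction is a genuine gap, not just an omitted verification.

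The ``if'' direction is closer in spirit to the paper (which also writes down explicit paths, e.g.\ $[\term_+E]\bigl((N^{2n-1},E)^{\mm},N^{2n-1}\bigr)$), but your plan to build snakes directly for all six regions and all parities ignores the paper's key reduction: \cref{IgnoreEast} and \cref{stretch} show that a rowful diagonal travelling east merely widens the board, so one only ever needs constructions in the special case where all non-terminal diagonals travel north (\cref{UsualEndpt}, \cref{init=tauplus}), with the general case obtained by re-inserting east-travelling rowful diagonals. Without that reduction, and without \cref{InitIsTau+E} pinning the initial square down to $\term_+E$ or $(p,0)$ up to inversion, the bookkeeping you worry about is not merely unwieldy --- there is no mechanism in your plan for proving that the list of constructions is \emph{complete}, which is what the theorem's ``only if'' requires. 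Your preliminary observations (the rotation $R$ realizing \cref{InverseHP}, the behaviour of $(0,0)$ and $(m-1,n-1)$, the middle row/column cycles) are all correct and do appear in the paper, but they are the easy part.
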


By rotating the checkerboard $180^\circ$ (cf.\ \cref{InverseHP}), this \lcnamecref{InitSquares} can be restated as follows:

\begin{thm} \label{TermSquares}
Assume $m \ge n \ge 3$. There is a hamiltonian path in $\Bmn$ whose terminal square is $(x,y)$ if and only if either:
\noprelistbreak
	\begin{enumerate} \itemsep=\smallskipamount 
	
	\item \label{TermSquares-m}
	$x = m-1$ and $0 \le y \le \nf$,
	or
	
	\item \label{TermSquares-n}
	$0 \le x \le m - \np$ and $y = n - 1$,
	or
	
	\item \label{TermSquares-nm}
	$0 \le x \le \mf - 1$ and $y = \nm$,
	or
	
	\item \label{TermSquares-nf}
	$m - \nf - 1 \le x \le m - 1$ and $y = \nf$,
	or
	
	\item \label{TermSquares-bottom}
	$\nm \le x \le \mm$ and $0 \le y \le \nm - 1$,
	or
	
	\item \label{TermSquares-top}
	$\mf \le x \le m - \np$ and $\nm \le y \le n - 1$.
	
	\end{enumerate}
\end{thm}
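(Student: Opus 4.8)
The plan is to obtain \cref{TermSquares} from \cref{InitSquares} by means of the symmetry of~$\Bmn$ under a $180^\circ$ rotation of the board; since $m \ge n \ge 3$ is assumed in both statements, \cref{InitSquares} will apply to the rotated board with no change of hypotheses. First I would define $\rho \colon \ZZ_m \times \ZZ_n \to \ZZ_m \times \ZZ_n$ by $(p,q)\rho = (m-1-p,\ n-1-q)$ and show that $\rho$ is an involution that reverses every directed edge of~$\Bmn$, i.e., an isomorphism from~$\Bmn$ onto its converse digraph. Granting this, if $v_0 \to v_1 \to \cdots \to v_{mn-1}$ is a hamiltonian path in~$\Bmn$, then $v_{mn-1}\rho \to v_{mn-2}\rho \to \cdots \to v_0\rho$ is a hamiltonian path as well, with initial square $v_{mn-1}\rho$ and terminal square $v_0\rho$; conversely $\rho$ undoes this. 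Hence $\Bmn$ has a hamiltonian path with terminal square $(x,y)$ if and only if it has one with initial square $(x,y)\rho = (m-1-x,\ n-1-y)$. (This step is what I expect to be recorded as \cref{InverseHP}.)

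To prove that $\rho$ reverses edges, it suffices to verify the two identities $\sigma E\rho E = \sigma\rho$ and $\sigma N\rho N = \sigma\rho$ for every square~$\sigma$. The first shows that $\rho$ maps the edge $\sigma \to \sigma E$ onto the edge $(\sigma E)\rho \to \big((\sigma E)\rho\big)E = \sigma\rho$, reversing its orientation, and the second does the same for the edges of the form $\sigma \to \sigma N$; since every edge of~$\Bmn$ is of one of these two types and $\rho$ is a bijection on squares, this is exactly the claim. Each identity is a short case check: in the ``interior'' branch of~$E$ (resp.~$N$) all of the maps involved act by a translation and the computation is immediate, while in the wrap-around branch the twist $q \mapsto n-1-q$ (resp.~$p \mapsto m-1-p$) is precisely cancelled by the corresponding reflection already built into~$\rho$.

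It then remains only to rewrite each of the six conditions of \cref{InitSquares}, evaluated at the square $(m-1-x,\ n-1-y)$, as a condition on $(x,y)$. Substituting $p = m-1-x$ and $q = n-1-y$ and using the elementary identities $\mf + \mp = m$, $\mm + \mf = m-1$, $\mp = \mm + 1$ together with their $n$-analogues $\nf + \np = n$, $\nm + \nf = n-1$, $\np = \nm + 1$ (each valid regardless of the parities of~$m$ and~$n$), one checks that conditions~(1)--(6) of \cref{InitSquares} become, respectively, conditions \pref{TermSquares-m}, \pref{TermSquares-n}, \pref{TermSquares-nm}, \pref{TermSquares-nf}, \pref{TermSquares-bottom}, and \pref{TermSquares-top} of \cref{TermSquares}; for example, condition~(6), namely $\nm \le p \le \mm$ and $0 \le q \le \nf$, transforms into $\mf \le x \le m - \np$ and $\nm \le y \le n-1$, which is~\pref{TermSquares-top}. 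The symmetry argument is essentially formal once~$\rho$ has been written down, so the only real obstacle — a modest one — is keeping this floor/ceiling bookkeeping straight across all six cases and both parities of~$m$ and~$n$.
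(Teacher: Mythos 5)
Your rotation argument is correct as far as it goes: your map $\rho$ is exactly the inverse map $\sigma \mapsto \widetilde\sigma$ of the paper, the edge-reversal identities you describe are the content of \cref{InverseHP}, and your translations of the six conditions of \cref{InitSquares} under $p = m-1-x$, $q = n-1-y$ all check out (e.g.\ $m-1-\mm = \mf$ and $m-1-\nm = m-\np$, so condition~(6) of \cref{InitSquares} does become \pref{TermSquares-top}). The difficulty is that this establishes only the \emph{equivalence} of \cref{InitSquares,TermSquares} --- which is precisely the first paragraph of the paper's combined proof of the two theorems --- and not either theorem itself. In the paper the logical flow runs in the opposite direction from yours: \cref{TermSquares} is the one actually proved, by a case analysis that assembles the terminal squares supplied by \cref{TermFor0q,TermForp0Small,TermForp0Large} (together with the inverses of the admissible initial squares) and verifies that their union is exactly the six regions \pref{TermSquares-m}--\pref{TermSquares-top}; \cref{InitSquares} is then the theorem deduced by the $180^\circ$ rotation. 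If your argument were installed as the proof of \cref{TermSquares}, the paper would contain no proof of \cref{InitSquares} that does not appeal back to \cref{TermSquares}, so the argument is circular in context.

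The missing content is the bulk of the paper: \cref{InitIsTau+E} and \cref{tau+E} to pin down the possible initial squares (up to inversion) as $\term_+E = (0,q)$ or as $(p,0)$; the three propositions \cref{TermFor0q,TermForp0Small,TermForp0Large} determining the terminal squares reachable from each such initial square; and, behind those, the machinery of \cref{AllNorth} and \cref{ReductionSect} (notably \cref{Init=TermE}, \cref{UsualEndpt}, and \cref{stretch}), which reduces everything to hamiltonian paths in which all non-terminal diagonals travel north. A complete proof must either reproduce that chain or supply an independent proof of \cref{InitSquares}; the rotation alone cannot carry the argument. Your floor/ceiling bookkeeping, however, is a correct and useful verification of the equivalence that the paper asserts without detail.
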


\begin{figure}[b]
\hrule\bigskip
\hbox to \textwidth{%
\includegraphics{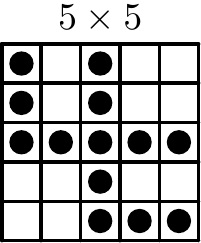}
\hfil
\includegraphics{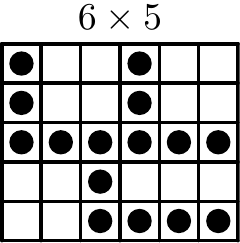}
\hfil
\includegraphics{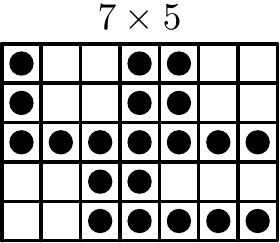}
\hfil
\includegraphics{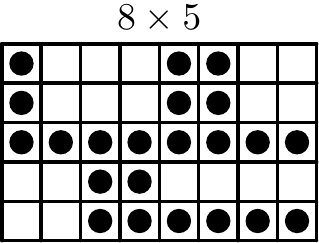}%
}
\bigskip
\hbox to \textwidth{%
\includegraphics{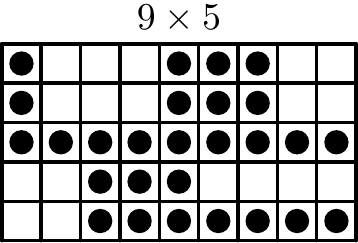}
\hfil
\includegraphics{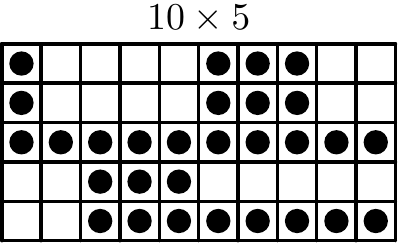}
\hfil
\includegraphics{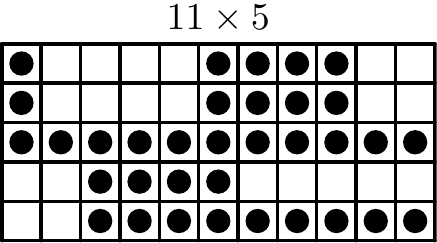}%
}
\bigskip
\centerline{
\includegraphics{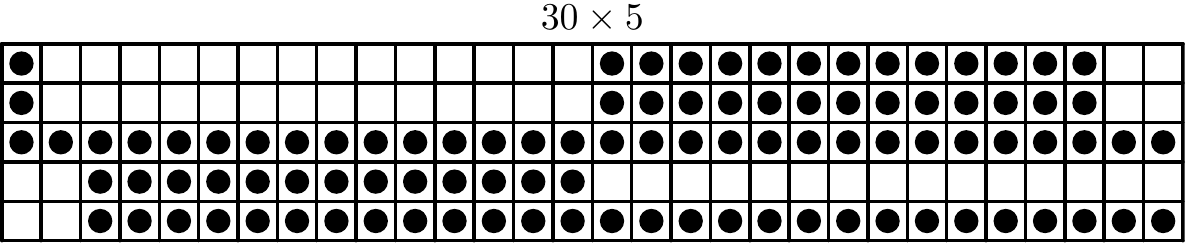}
}
\caption{The initial squares (\lower1pt\hbox{\larger[2]$\bullet$})
of hamiltonian paths in some $m \times 5$ projective checkerboards.}
\label{Initmx5Fig}
\end{figure}

\begin{figure}[b]
\hrule\bigskip
\centerline{
\includegraphics{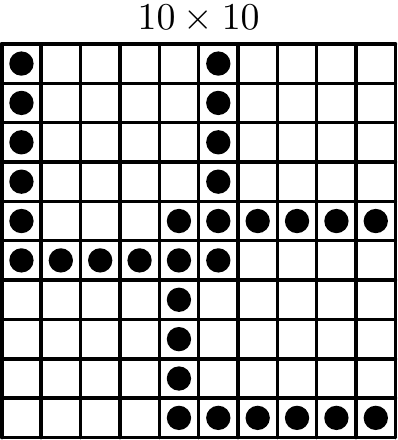}
\hfil
\includegraphics{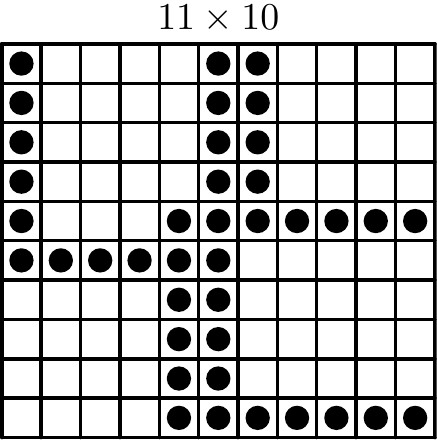}
}
\bigskip
\centerline{
\includegraphics{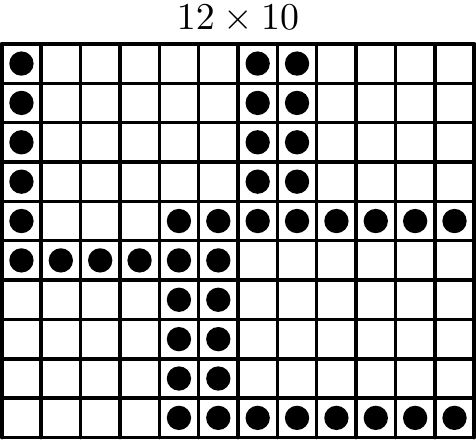}
\hfil
\includegraphics{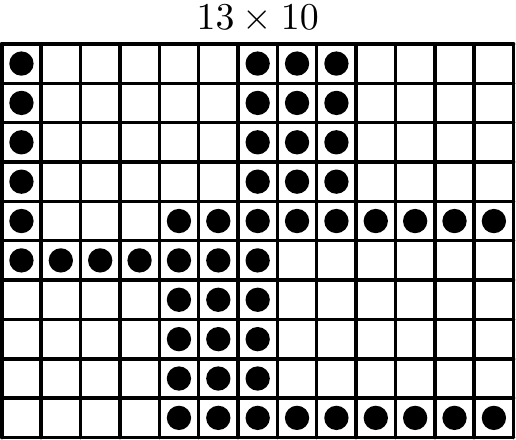}
}
\bigskip
\centerline{
\includegraphics{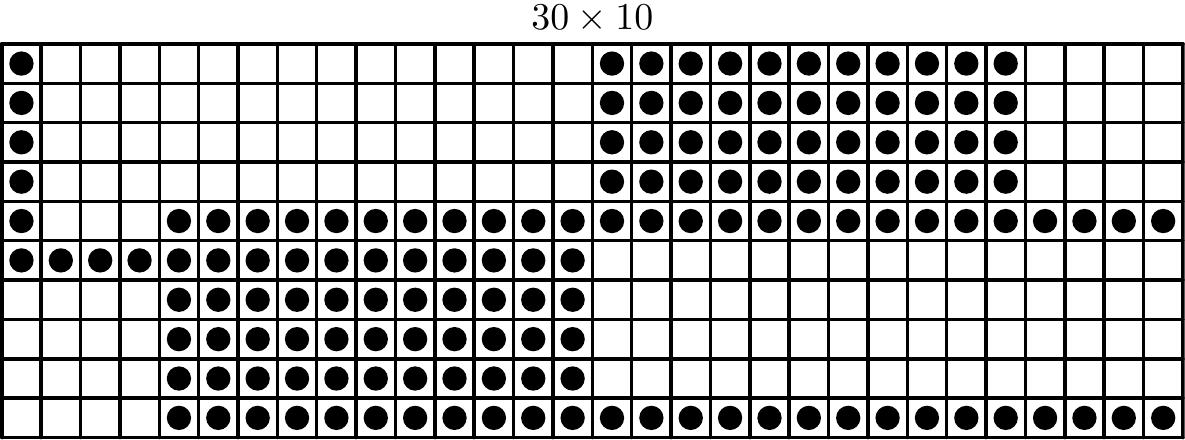}
}
\caption{The initial squares (\lower1pt\hbox{\larger[2]$\bullet$}) of hamiltonian paths in some $m \times 10$ projective checkerboards.}
\label{Initmx10Fig}
\end{figure}

\begin{figure}[b]
\hrule\bigskip
\hbox to \textwidth{%
\includegraphics{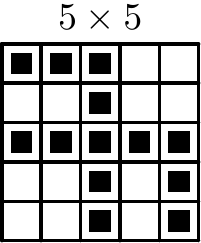}
\hfil
\includegraphics{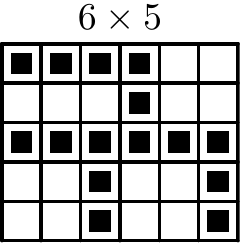}
\hfil
\includegraphics{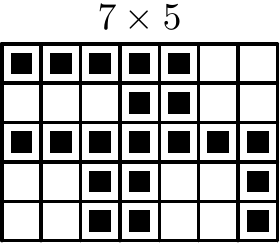}
\hfil
\includegraphics{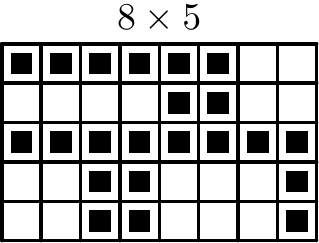}%
}
\bigskip
\hbox to \textwidth{%
\includegraphics{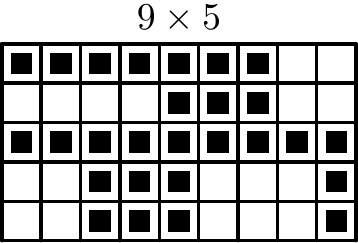}
\hfil
\includegraphics{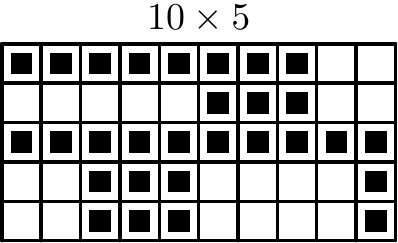}
\hfil
\includegraphics{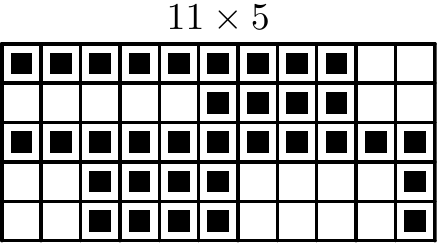}%
}
\bigskip
\centerline{
\includegraphics{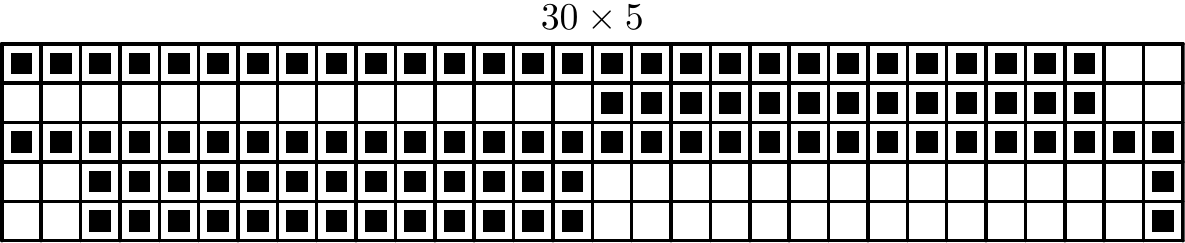}
}
\caption{The terminal squares (\vrule width 6pt height 6pt depth 0pt) of hamiltonian paths in some $m \times 5$ projective checkerboards.}
\label{Termmx5Fig}
\end{figure}

\begin{figure}[b]
\hrule\bigskip
\centerline{
\includegraphics{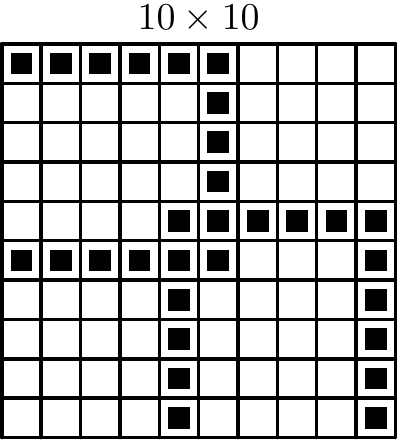}
\hfil
\includegraphics{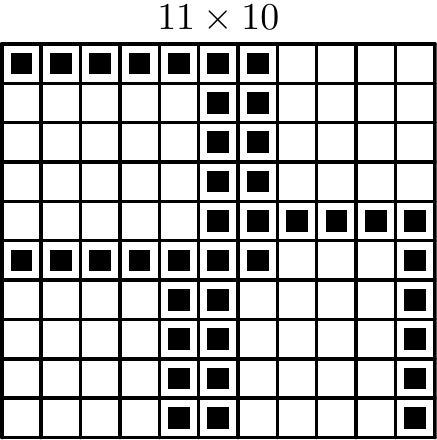}
}
\bigskip
\centerline{
\includegraphics{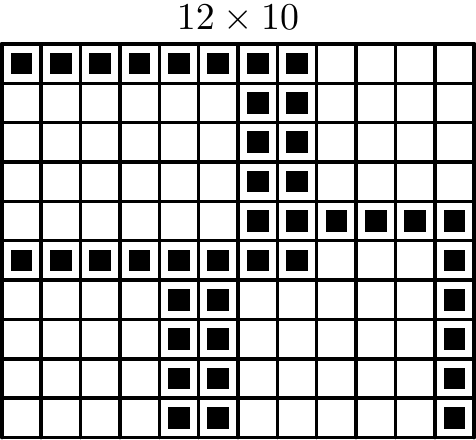}
\hfil
\includegraphics{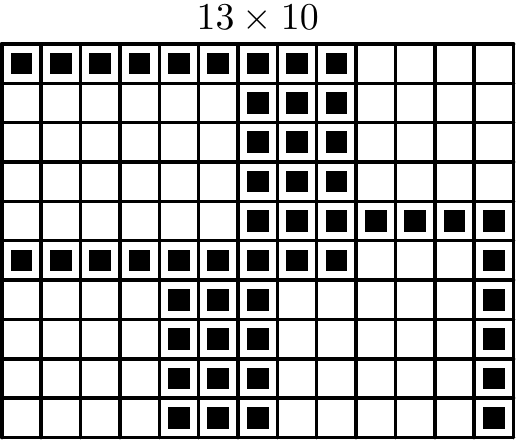}
}
\bigskip
\centerline{
\includegraphics{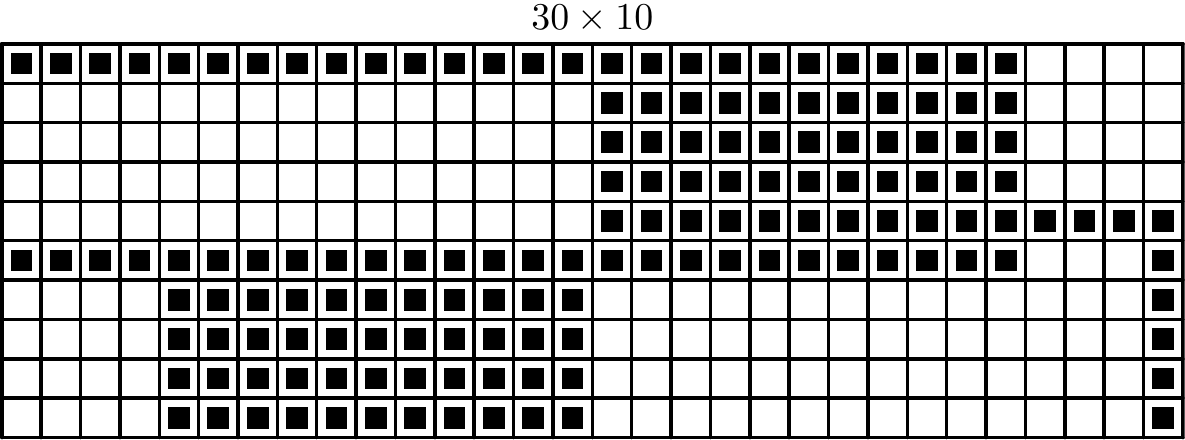}
}
\caption{The terminal squares (\vrule width 6pt height 6pt depth 0pt) of hamiltonian paths in some $m \times 10$ projective checkerboards.}
\label{Termmx10Fig}
\end{figure}

\begin{rem} \label{n=2HC}
By symmetry, there is no harm in assuming that $m \ge n$ when studying $\Bmn$. Furthermore, if $\min(m,n) \le 2$, then it is easy to see that $\Bmn$ has a hamiltonian cycle. Therefore, every square is the initial square of a hamiltonian path (and the terminal square of some other hamiltonian path) in the cases not covered by \cref{InitSquares,TermSquares}. 
\end{rem}

For any square $\init$ of~$\Bmn$, we determine not only whether there exists a hamiltonian path that starts at~$\init$, but also the terminal square of each of these hamiltonian paths. This more detailed result is stated and proved in \cref{GeneralSect}. It yields \cref{InitSquares,TermSquares} as corollaries. As preparation for the proof, we recall some known results in \cref{PrelimSect}, consider a very helpful special case in \cref{AllNorth}, and explain how to reduce the general problem to this special case in \cref{ReductionSect}.

\begin{rem}
Suppose $m$ and~$n$ are large. It follows from \cref{InitSquares} that a square in $\Bmn$ is much less likely to be the starting point of a hamiltonian path when the checkerboard is square than when it is very oblong:%
\noprelistbreak
	\begin{itemize}
	\item If $m = n$, then only a small fraction ($\approx 3/n$) of the squares are the initial square of a hamiltonian path. 
	\item In contrast, if $m$ is much larger than~$n$, then about half of the squares are the initial square of a hamiltonian path.
	\end{itemize}
\end{rem}

The following question remains open (even when $m = n$).

\begin{problem}
Which squares are the terminal square of a hamiltonian path that starts at $(0,0)$ in an $m \times n$ Klein-bottle checkerboard, where 
	$$ \text{$(m-1,q)E = (0, n - 1 - q)$ \ and \ $(p,n-1)N = (p, 0)$} .$$
\end{problem}

\section{Preliminaries: definitions, notation, and previous results} \label{PrelimSect}

We reproduce some of the elementary, foundational content of \cite{Forbush}, slightly modified to eliminate that paper's standing assumption that $m = n$. 

\begin{notation}[{}{\cite[Notation~3.1]{Forbush}}]
 We use $[\sigma ](X_1X_2\cdots X_k)$, where $X_i \in \{E,N\}$, to denote the
walk in~$\Bmn$ that visits (in order) the squares
 $$ \sigma ,~\sigma X_1,~ \sigma X_1X_2,~\dots,~ \sigma X_1X_2\dots X_k .$$
 \end{notation}

\begin{defn}[{}{\cite[Defn.~2.14]{Forbush}}]
 For $\sigma = (p,q) \in \Bmn$, we define the \emph{inverse} of~$\sigma$ to be
 	$ \widetilde\sigma = (m-1-p,n-1-q) $.
 \end{defn}
 
 \begin{rem}
 $\widetilde\sigma$ can be obtained from~$\sigma$ by rotating the checkerboard 180 degrees.
 \end{rem}
 
 \begin{prop}[{}{\cite[Prop.~2.15]{Forbush}}] \label{InverseHP}
 If there is a hamiltonian path from $\init$ to~$\term$ in~$\Bmn$, then there is also a hamiltonian path from $\widetilde\term$ to~$\widetilde\init$.
 
 More precisely, if $\hampath = [\init](X_1 X_2 \cdots X_k)$ is a hamiltonian path from $\init$ to~$\term$, then the \emph{inverse} of~$\hampath$ is the hamiltonian path
 	$\widetilde\hampath = [\widetilde\term]( X_k X_{k-1} \cdots X_1 )$ from $\widetilde\term$ to~$\widetilde\init$.
 \end{prop}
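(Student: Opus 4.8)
The plan is to verify directly that reversing the sequence of moves, while applying the $180^\circ$ rotation $\sigma\mapsto\widetilde\sigma$ to every square, carries a hamiltonian path to a hamiltonian path; everything reduces to one identity relating the rotation to the edge-maps $E$ and~$N$. First I would record the two trivialities that are needed: the rotation $\sigma\mapsto\widetilde\sigma$ is an involution of $\ZZ_m\times\ZZ_n$, and each of $E$ and~$N$ is a bijection of $\ZZ_m\times\ZZ_n$ (its inverse is obtained by reading the defining formula backwards), so that $E^{-1}$ and $N^{-1}$ make sense.

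The key step is the identity
$$ \widetilde{\sigma E}\cdot E = \widetilde\sigma \qquad\text{and}\qquad \widetilde{\sigma N}\cdot N = \widetilde\sigma \qquad\text{for all } \sigma\in\ZZ_m\times\ZZ_n . $$
In other words, conjugating $E$ by the rotation yields~$E^{-1}$, and likewise for~$N$; equivalently, $\sigma\mapsto\widetilde\sigma$ is an isomorphism from $\Bmn$ onto the digraph with the same vertices but all edges reversed. I would prove the $E$-identity by the obvious two-case split on the first coordinate~$p$ of~$\sigma$: when $p<m-1$, both applications of~$E$ merely increment the first coordinate and the computation is immediate; when $p=m-1$, the twist $q\mapsto n-1-q$ enters, and one checks that the second application of~$E$ undoes it exactly (which works because that twist is its own inverse). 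The $N$-identity is the same computation with the two coordinates interchanged. This case check is the only genuine computation in the argument, and the only place to be careful is the bookkeeping of the twist.

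Finally I would assemble the conclusion. List the squares of~$\hampath$ as $\sigma_0=\init,\ \sigma_1,\ \dots,\ \sigma_k=\term$, where $\sigma_i=\sigma_{i-1}X_i$. Applying the key identity with $\sigma=\sigma_{i-1}$ gives $\widetilde{\sigma_i}\,X_i=\widetilde{\sigma_{i-1}X_i}\,X_i=\widetilde{\sigma_{i-1}}$, so the walk $[\widetilde\term](X_kX_{k-1}\cdots X_1)$ visits, in order, the squares $\widetilde{\sigma_k},\ \widetilde{\sigma_{k-1}},\ \dots,\ \widetilde{\sigma_0}$; that is, it retraces the rotated squares of~$\hampath$ in reverse. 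Because $\hampath$ visits every square of~$\Bmn$ exactly once and $\sigma\mapsto\widetilde\sigma$ is a bijection, $\widetilde\hampath$ also visits every square exactly once; since it begins at $\widetilde{\sigma_k}=\widetilde\term$ and ends at $\widetilde{\sigma_0}=\widetilde\init$, it is the required hamiltonian path from $\widetilde\term$ to~$\widetilde\init$.
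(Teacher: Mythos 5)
Your proof is correct. The paper itself gives no proof of this proposition --- it is quoted from \cite{Forbush} as background --- and your argument is exactly the expected one: the identities $\widetilde{\sigma E}\,E = \widetilde\sigma$ and $\widetilde{\sigma N}\,N = \widetilde\sigma$ (checked by the two-case split on whether the twist is invoked) show that the $180^\circ$ rotation reverses every edge of~$\Bmn$, and reversing a hamiltonian path under an edge-reversing bijection yields the hamiltonian path $[\widetilde\term](X_k\cdots X_1)$ from $\widetilde\term$ to~$\widetilde\init$.
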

 
 \begin{defn}[{}{\cite[Defn.~2.14 and Prop.~2.15]{Forbush}}]
 If $m = n$, then:
 	\begin{itemize}
	\item The \emph{transpose}~$\sigma^*$ of any square~$\sigma$ of~$\Bmn$ is defined by $(p,q)^* = (q,p)$.
 
	\item For a hamiltonian path $\hampath = [\init](X_1 X_2 \cdots X_k)$ from $\init$ to~$\term$, the \emph{transpose} of~$\hampath$ is the hamiltonian path
 	$\hampath^* = [\init^*]( X_1^* X_2^* \cdots X_k^* )$ from $\init^*$ to~$\term^*$,
	where $E^* = N$ and $N^* = E$.
	\end{itemize}
 \end{defn}

\subsection{Direction-forcing diagonals} \label{diag-sect}

\begin{defn}[{}{\cite[Defn.~2.1]{Forbush}}] Define a symmetric, reflexive relation~$\sim$ on the set of
squares of~$\Bmn$ by $\sigma \sim \term$ if
 $$ \{\sigma E,\sigma N\} \cap \{\term E , \term N\} \not= \emptyset .$$
 The equivalence classes of the transitive closure of~$\sim$ are
\emph{direction-forcing diagonals}. For short, we refer to them simply as
\emph{diagonals}. Thus, the diagonal containing~$\sigma$ is
 $$\{\sigma ,\sigma NE^{-1} ,\sigma (NE^{-1})^2 , \dots, \sigma EN^{-1} \}.$$
 \end{defn}

\begin{notation}[{}{\cite[Notn.~2.3]{Forbush}}]
 For $0 \le i \le m + n - 2$, let 
 	$$\subdiag_i = \{\, (p,q) \in \Bmn \mid p + q = i \,\} .$$
We call $S_i$ a \emph{subdiagonal}.
 \end{notation}

\begin{prop}[{}{\cite[Prop.~2.4]{Forbush}}]  \label{diags(m+n-3)}
 For each~$i$ with $0 \le i \le m+n-3$, the set $D_i = \subdiag_i \cup \subdiag_{m+n-3-i}$
is a diagonal. The only other diagonal $D_{m+n-2}$ consists of the single square
$(m-1,n-1)$.
 \end {prop}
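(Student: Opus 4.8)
The plan is to reduce the whole statement to the orbit structure of the single permutation $\varphi$ of $\ZZ_m \times \ZZ_n$ given by $\sigma\varphi = \sigma NE^{-1}$. By the definition of \emph{diagonal} (and the formula recorded there), the diagonal containing a square~$\sigma$ is exactly its $\varphi$-orbit $\{\,\sigma\varphi^k \mid k \in \ZZ\,\}$, so it is enough to describe the orbits of~$\varphi$. A short direct computation of $\varphi = NE^{-1}$ gives, with $\sigma = (p,q)$,
\begin{equation*}
 \sigma\varphi =
 \begin{cases}
 (p-1,q+1) & \text{if $p \ge 1$ and $q \le n-2$,}\\
 (m-1,n-2-q) & \text{if $p = 0$ and $q \le n-2$,}\\
 (m-2-p,0) & \text{if $p \le m-2$ and $q = n-1$,}\\
 (m-1,n-1) & \text{if $p = m-1$ and $q = n-1$.}
 \end{cases}
\end{equation*}
The last line shows $(m-1,n-1)$ is a fixed point of~$\varphi$, and the first three show it is the only one; hence its orbit $\{(m-1,n-1)\} = \subdiag_{m+n-2}$ is a diagonal, which we call $D_{m+n-2}$.

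Now I would fix~$i$ with $0 \le i \le m+n-3$ and set $j = m+n-3-i$, so $0 \le j \le m+n-3$ and $D_i = \subdiag_i \cup \subdiag_j = D_j$; the goal is to show that $\subdiag_i \cup \subdiag_j$ is a single $\varphi$-orbit. A glance at the formula (comparing coordinate sums) shows that $\varphi$ sends $\subdiag_k$ into $\subdiag_k \cup \subdiag_{m+n-3-k}$ for every $k \le m+n-3$---the first line stays within $\subdiag_k$, while the second and third jump to $\subdiag_{m+n-3-k}$---so $\varphi$ permutes $\subdiag_i \cup \subdiag_j$. To check that it does so as a single cycle, start at the square of~$\subdiag_i$ with the largest first coordinate: the first line of the formula then applies again and again, each application lowering the first coordinate by~$1$, and (because $i \le m+n-3$) it runs through every square of~$\subdiag_i$ and halts at the square of~$\subdiag_i$ with the smallest first coordinate, which lies on the edge $p=0$ or the edge $q=n-1$. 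One further step of~$\varphi$---via the second or the third line---lands on the square of~$\subdiag_j$ with the largest first coordinate, and repeating the argument with $i$ and~$j$ interchanged runs through all of~$\subdiag_j$ and returns to the starting square. Hence the $\varphi$-orbit of that square is exactly $\subdiag_i \cup \subdiag_j$, i.e.\ $\subdiag_i \cup \subdiag_j$ is the diagonal~$D_i$.

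To finish, $\ZZ_m \times \ZZ_n = \bigcup_{k=0}^{m+n-2}\subdiag_k$, and $\subdiag_k \subseteq D_k$ for $k \le m+n-3$ while $\subdiag_{m+n-2} = D_{m+n-2}$, so the diagonals $D_0,\dots,D_{m+n-3},D_{m+n-2}$ already cover every square; since the diagonals partition the squares, these are all of them. I expect the one genuinely fiddly point to be the bookkeeping for the ``short'' subdiagonals that meet the corners---say $i=0$, where $\subdiag_i$ is a single square so its ``run'' is empty, or $i=n-1$, where the smallest-first-coordinate square of $\subdiag_i$ is the corner $(0,n-1)$---where one must confirm, case by case, that $\varphi$ really does carry the extreme square of~$\subdiag_i$ to the extreme square of~$\subdiag_j$; with the formula above in hand, each such check is immediate.
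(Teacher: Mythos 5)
Your argument is correct: the reduction to the orbits of $\varphi = NE^{-1}$ is exactly what the paper's definition of a diagonal already encodes (the diagonal of~$\sigma$ is listed there as $\{\sigma, \sigma NE^{-1}, \dots, \sigma EN^{-1}\}$), your case formula for~$\varphi$ checks out against the definitions of $E$ and~$N$, and the single-cycle traversal of $\subdiag_i \cup \subdiag_{m+n-3-i}$ plus the final covering argument is sound (including the corner cases $i=0$ and $i=n-1$ you flag). The paper itself gives no proof, quoting the statement from \cite[Prop.~2.4]{Forbush}, so there is nothing to compare against; your write-up is the natural argument and would serve as a complete proof.
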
 

\begin{cor}[{}{\cite[Notn.~2.5]{Forbush}}]
 Let $D$ be a diagonal, other than $D_{m+n-2}$. Then we may write $D = \subdiag_a \cup \subdiag_b$ with
$a \le b$ and $a + b = m + n - 3$.
 \end{cor}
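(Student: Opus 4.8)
The plan is to deduce this immediately from \cref{diags(m+n-3)}. That \lcnamecref{diags(m+n-3)} provides a complete list of the diagonals of~$\Bmn$: every diagonal is either the singleton $D_{m+n-2} = \{(m-1,n-1)\}$, or one of the sets $D_i = \subdiag_i \cup \subdiag_{m+n-3-i}$ with $0 \le i \le m+n-3$. So the first step is to invoke this classification, together with the hypothesis $D \ne D_{m+n-2}$, to conclude that $D = D_i$ for some~$i$ in the stated range.

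The second and final step is a relabeling: I would set $a = \min\{i,\, m+n-3-i\}$ and $b = \max\{i,\, m+n-3-i\}$, so that $a \le b$ and $a + b = i + (m+n-3-i) = m+n-3$, and then note that $D = D_i = \subdiag_i \cup \subdiag_{m+n-3-i} = \subdiag_a \cup \subdiag_b$. Before writing this down I would check that both indices stay in range: since $0 \le i \le m+n-3$, the complementary index $m+n-3-i$ also lies in $[0,\, m+n-3] \subseteq [0,\, m+n-2]$, so $\subdiag_a$ and $\subdiag_b$ are genuinely subdiagonals in the sense of the notation $\subdiag_i$ introduced above.

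There is no real obstacle here; the statement is essentially a repackaging of \cref{diags(m+n-3)}. The two points worth a sentence are: first, confirming that \cref{diags(m+n-3)} exhausts all the diagonals (it does, by its final sentence), so that $D$ is necessarily of the form $D_i$; and second, acknowledging the degenerate case $i = m+n-3-i$, which occurs exactly when $m+n$ is odd, in which case $a = b$ and $D = \subdiag_a$ is a single subdiagonal — this is still consistent with the conclusion, since it only requires $a \le b$. No further work is needed.
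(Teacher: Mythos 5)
Your proposal is correct and matches the paper's approach: the corollary is stated as an immediate consequence of \cref{diags(m+n-3)}, and the paper gives no further argument beyond the relabeling $a = \min\{i, m+n-3-i\}$, $b = \max\{i, m+n-3-i\}$ that you describe. Your side remarks (the range check on the complementary index and the degenerate case $a = b$ when $m+n$ is odd) are accurate and harmless additions.
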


\begin{defn}[{}{\cite[Defn.~2.7]{Forbush}}]
If $\hampath$ is a hamiltonian path in~$\Bmn$, then the diagonal containing the
terminal square~$\term$ is called the \emph{terminal} diagonal of~$\hampath$. All
other diagonals are \emph{non-terminal} diagonals.
 \end{defn}

\begin{defn}[{}{\cite[Defn.~2.8]{Forbush}}]
 Let $\hampath$ be a hamiltonian in~$\Bmn$. A square~$\sigma$ \emph{travels
east} (in~$\hampath$) if the edge from~$\sigma$ to~$\sigma E$ is in~$\hampath$. Similarly,
$\sigma$ \emph{travels north} (in~$\hampath$) if the edge from~$\sigma$ to~$\sigma N$
is in~$\hampath$.
 \end{defn}

The following important observation is essentially due to R.\,A.\,Rankin \cite[proof of Thm.~2]{Rankin-CampGrpThy}.

\begin{prop}[{}{\cite[Prop.~2.9]{Forbush}, cf.~\cite[Prop.\ on p.~82]{Housman}, \cite[Lem.~6.4c]{Curran}}] \label{DiagMustTravel}
 If $\hampath$ is a hamiltonian path in~$\Bmn$, then, for each non-terminal
diagonal~$D$, either every square in~$D$ travels north, or every square in~$D$
travels east. For short, we say that either $D$ travels north or $D$ travels
east.
 \qed
 \end{prop}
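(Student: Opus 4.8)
The plan is to combine the cyclic structure of a diagonal with the elementary fact that in a hamiltonian \emph{path} every vertex has in-degree at most~$1$.

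Recall from \cref{diag-sect} that a diagonal~$D$ is a single orbit of the permutation $NE^{-1}$ of $\ZZ_m \times \ZZ_n$. The one-square diagonal $D_{m+n-2} = \{(m-1,n-1)\}$ is trivial, since a non-terminal square travels in its unique outgoing direction; so suppose $D$ has $k \ge 2$ squares, listed cyclically as $\sigma_0, \dots, \sigma_{k-1}$ with $\sigma_{j+1} = \sigma_j NE^{-1}$ and indices read modulo~$k$. This indexing is chosen so that $\sigma_j N = \sigma_{j+1} E$ for every~$j$; I will write $\rho_j$ for this common square. Because $k \ge 2$ we have $\sigma_j \ne \sigma_{j+1}$, so $\rho_j$ receives two distinct edges coming out of~$D$: the edge $\sigma_j \to \rho_j$, recording that $\sigma_j$ travels north, and the edge $\sigma_{j+1} \to \rho_j$, recording that $\sigma_{j+1}$ travels east.

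Now let $D$ be non-terminal, and let $\hampath$ be a hamiltonian path. The terminal square of~$\hampath$ is not in~$D$, so every $\sigma_j$ has out-degree~$1$ in~$\hampath$, i.e.\ travels exactly one of east and north. On the other hand, $\rho_j$ has in-degree at most~$1$, so the two edges $\sigma_j \to \rho_j$ and $\sigma_{j+1} \to \rho_j$ are not both in~$\hampath$: for no~$j$ does $\sigma_j$ travel north while $\sigma_{j+1}$ travels east. Equivalently, whenever $\sigma_{j+1}$ travels east, so does~$\sigma_j$ (it does not travel north, hence travels east). Propagating this implication around the cycle $j \in \ZZ_k$ shows that if even one square of~$D$ travels east then every square of~$D$ travels east, and otherwise every square of~$D$ travels north. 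Either way, $D$ travels east or $D$ travels north.

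I do not expect a serious obstacle: the geometric work has already been done in \cref{diags(m+n-3)}, and what remains is a short propagation around a cycle. The two things to be careful about are fixing the cyclic indexing so that $\sigma_j N = \sigma_{j+1} E$ holds with no exceptional~$j$ (which forces the small $k \le 1$ case to be dispatched first), and pinpointing where non-terminality is used: it is exactly what guarantees that no $\sigma_j$ has out-degree~$0$, so that the propagation around the cycle cannot be interrupted, and this is why the terminal diagonal genuinely must be excluded.
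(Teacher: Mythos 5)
Your proof is correct. Note that the paper itself gives no proof of this proposition: it is quoted from \cite[Prop.~2.9]{Forbush} (ultimately Rankin's observation) with a \qed in the statement, so there is no in-paper argument to compare against. Your argument is the standard one: index the diagonal cyclically by the $NE^{-1}$ orbit so that $\sigma_j N = \sigma_{j+1}E$, use in-degree $\le 1$ at the common target to forbid ``$\sigma_j$ north and $\sigma_{j+1}$ east,'' use non-terminality to guarantee out-degree exactly $1$ on $D$, and propagate around the cycle. The only nitpick is your dismissal of the singleton diagonal $D_{m+n-2}$: the square $(m-1,n-1)$ does not have a ``unique outgoing direction'' (both $(m-1,n-1)E$ and $(m-1,n-1)N$ equal $(0,0)$); the case is trivial simply because a universally quantified statement over a one-element set holds once that square travels exactly one of the two ways.
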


\begin{prop}[{}{\cite[Prop.~2.10]{Forbush}, cf.~\cite[Lem.~6.4b]{Curran}}] \label{term-travels}
 Let $D$ be the terminal diagonal of a hamiltonian path~$\hampath$ in~$\Bmn$,
with initial square~$\init$ and terminal square~$\term$, and let $\sigma \in D$.
 \begin{itemize}
 \item if $\term N \not= \init$, then $\term NE^{-1}$ travels east;
 \item if $\term E \not= \init$, then $\term EN^{-1}$ travels north;
 \item if $\sigma$ travels east and $\sigma N\not=\init$, then $\sigma NE^{-1}$
travels east; 
 \item if $\sigma$ travels east, then $\sigma E N^{-1}$ does not travel north;
 \item if $\sigma$ travels north and $\sigma E\not=\init$, then $\sigma
EN^{-1}$ travels north; and
 \item if $\sigma$ travels north, then $\sigma N E^{-1}$ does not travel east.
  \qed
 \end{itemize}
 \end{prop}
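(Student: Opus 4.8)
The plan is to reduce every one of the six assertions to the single local fact that, in a hamiltonian path $\hampath$, each square has at most one incoming edge and at most one outgoing edge belonging to $\hampath$, the incoming edge being absent only at~$\init$ and the outgoing edge being absent only at~$\term$. The bridge between this fact and the diagonal-shift operators $NE^{-1}$ and $EN^{-1}$ is the observation that the two in-neighbours of a square~$\rho$ are exactly $\rho E^{-1}$ and $\rho N^{-1}$: specializing to $\rho = \sigma N$ shows that $\sigma$ and $\sigma N E^{-1}$ are the two in-neighbours of~$\sigma N$, and specializing to $\rho = \sigma E$ shows that $\sigma$ and $\sigma E N^{-1}$ are the two in-neighbours of~$\sigma E$. (Here I use that $E$ and~$N$ are bijections of $\ZZ_m \times \ZZ_n$, which is immediate from the definitions, since the two branches of each have disjoint images whose union is the whole board; nothing in this requires $m = n$.)

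With this in hand I would split the six items into two families. For the two ``negative'' items --- if $\sigma$ travels east then $\sigma E N^{-1}$ does not travel north, and if $\sigma$ travels north then $\sigma N E^{-1}$ does not travel east --- suppose $\sigma$ travels east. Then the edge $\sigma \to \sigma E$ lies in~$\hampath$, so $\sigma E$ has an incoming edge and hence $\sigma E \ne \init$; therefore this is the \emph{only} incoming edge of~$\sigma E$, so the edge $\sigma E N^{-1} \to \sigma E$ is absent, which is exactly the statement that $\sigma E N^{-1}$ does not travel north. The other negative item is the mirror image, with $E$ and $N$ interchanged and $\sigma N$ in place of~$\sigma E$. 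For the four ``positive'' items, suppose (say) $\sigma N \ne \init$, so that $\sigma N$ has exactly one incoming edge, necessarily from one of its in-neighbours $\sigma$ and $\sigma N E^{-1}$. In each relevant case the edge $\sigma \to \sigma N$ is excluded: either $\sigma = \term$ has no outgoing edge (the first item), or $\sigma$ travels east and so cannot also travel north (the third item). Hence the incoming edge of~$\sigma N$ is $\sigma N E^{-1} \to \sigma N$, i.e.\ $\sigma N E^{-1}$ travels east --- and in particular $\sigma N E^{-1} \ne \term$, so the conclusion is meaningful. The second and fifth items follow by the same argument applied to~$\sigma E$ (using $\sigma E \ne \init$ and, for the fifth item, that $\sigma$ travelling north excludes $\sigma$ travelling east).

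Since the proof is pure bookkeeping, there is no real obstacle; the only things to watch are the degenerate boundary cases. One should check, before invoking ``exactly one incoming edge,'' that the square in question is not~$\init$, and should observe that the only square of~$\Bmn$ whose two out-neighbours coincide is $(m-1,n-1)$ --- in which case $\sigma N E^{-1}$ or $\sigma E N^{-1}$ degenerates to~$\sigma$ itself, but then $\sigma N$ (resp.\ $\sigma E$) turns out to equal~$\init$, so the hypothesis of the item at hand is vacuous. I would also record the incidental fact that the fourth and sixth items hold for \emph{every} square~$\sigma$, and the third and fifth for every~$\sigma$ meeting their hypotheses, regardless of whether $\sigma$ lies on the terminal diagonal~$D$; the restriction $\sigma \in D$ in the statement simply reflects how the proposition will be used later.
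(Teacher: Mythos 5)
Your proof is correct. Note that the paper does not prove this proposition at all: it is quoted verbatim from \cite[Prop.~2.10]{Forbush} and stated with a \verb|\qed|, so there is no in-paper argument to compare against. Your in/out-degree bookkeeping (each square of $\Bmn$ has exactly the two in-neighbours $\rho E^{-1}$ and $\rho N^{-1}$, and in a hamiltonian path every square other than $\init$ has exactly one incoming edge while every square other than $\term$ has exactly one outgoing edge) is the standard argument and establishes all six items; your observation that items three through six need no hypothesis $\sigma \in D$ is also accurate.

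One small imprecision in your degenerate-case remark: when $\sigma = (m-1,n-1)$, so that $\sigma N E^{-1} = \sigma$, it is not always true that $\sigma N = \init$. That implication does hold when $\sigma = \term$ (if $\term = (m-1,n-1)$ and $(0,0) \neq \init$, then $(0,0)$ would need its incoming edge from $\term$, which has none), so the hypotheses of the first two items are indeed vacuous there. But in the third and fifth items, if $(m-1,n-1)$ travels east then $(0,0)$ acquires an incoming edge and hence is \emph{not} $\init$; the items are rescued not by vacuity but because the conclusion degenerates to the hypothesis ($\sigma N E^{-1} = \sigma$ travels east). This does not affect the validity of the proof, but the remark as written is stated too broadly.
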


\begin{cor}[{}{\cite[Cor.~2.11]{Forbush}, cf.~\cite[Lem.~6.4a]{Curran}}] \label{iotaE=tau}
 If $\hampath$ is a hamiltonian path in~$\Bmn$, then the diagonal that contains
$\init E^{-1}$ and $\init N^{-1}$ is the terminal diagonal.
 \end{cor}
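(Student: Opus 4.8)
The plan is a short proof by contradiction resting on \cref{DiagMustTravel}. First I would observe that $\init E^{-1}$ and $\init N^{-1}$ really do lie in a common diagonal: setting $\sigma = \init E^{-1}$, we have $\sigma E N^{-1} = \init N^{-1}$, so $\init N^{-1}$ is the last of the squares $\sigma, \sigma N E^{-1}, \sigma (N E^{-1})^2, \dots, \sigma E N^{-1}$ that constitute the diagonal through~$\sigma$. Write $D$ for this diagonal. Note also that $\init E^{-1}$ and $\init N^{-1}$ are well-defined, since $E$ and~$N$ are bijections of $\ZZ_m \times \ZZ_n$, and are distinct from~$\init$, since $E$ and~$N$ have no fixed points.

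Now suppose, for a contradiction, that $D$ is \emph{not} the terminal diagonal of~$\hampath$. Then $D$ is a non-terminal diagonal, so \cref{DiagMustTravel} applies and tells us that either every square of~$D$ travels east or every square of~$D$ travels north. If $D$ travels east, then in particular $\init E^{-1}$ travels east, so the edge from $\init E^{-1}$ to $(\init E^{-1}) E = \init$ lies in~$\hampath$. If instead $D$ travels north, then $\init N^{-1}$ travels north, so the edge from $\init N^{-1}$ to $(\init N^{-1}) N = \init$ lies in~$\hampath$. In either case $\hampath$ contains an edge ending at its initial square~$\init$, which is impossible. Therefore $D$ must be the terminal diagonal.

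I expect no real obstacle here: the heart of the argument is simply that $\init E^{-1}$ and $\init N^{-1}$ are exactly the two squares whose $E$- and $N$-edges point into~$\init$, together with the dichotomy of \cref{DiagMustTravel}. The one point to handle with a little care is the degenerate position $\init = (0,0)$, where $\init E^{-1} = \init N^{-1} = (m-1,n-1)$ and $D$ is the singleton diagonal $D_{m+n-2}$; but the argument above goes through verbatim, since whichever direction the lone square $(m-1,n-1)$ is assigned by \cref{DiagMustTravel} sends it to $(0,0) = \init$. One should of course also confirm that \cref{DiagMustTravel} is being invoked legitimately — it is, precisely because the contradiction hypothesis is that $D$ is non-terminal.
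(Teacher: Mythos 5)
Your proof is correct and is the standard argument: the paper itself states this as a quoted result from \cite[Cor.~2.11]{Forbush} without reproving it, and the natural deduction is exactly yours --- if the diagonal through $\init E^{-1}$ and $\init N^{-1}$ were non-terminal, \cref{DiagMustTravel} would force one of those two squares to send an edge of $\hampath$ into~$\init$, contradicting that the initial square of a path has invalence~$0$. Your preliminary check that the two squares share a diagonal (via $\init \in \{\init E^{-1}E\} \cap \{\init N^{-1}N\}$) and your remark on the degenerate square $(m-1,n-1)$ are both sound.
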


The following \lcnamecref{circ-orient} follows from \cref{term-travels} by
induction. 

\begin{cor}[{}{\cite[Cor.~2.12]{Forbush}}] \label{circ-orient}
  Let $D$ be the terminal diagonal of a hamiltonian path~$\hampath$ in~$\Bmn$,
with initial square~$\init$ and terminal square~$\term$, and let $|D|$ denote
the cardinality of~$D$.
	\begin{enumerate}
	
	\item \label{circ-orient-term}
	For each $\sigma \in D$, there is a unique integer $u(\sigma) \in
\{1,2,\ldots,|D|\}$ with $\sigma = \term (N E^{-1})^{u(\sigma)}$; the
square~$\sigma$ travels east iff $u(\sigma) < u(\init E^{-1})$.

	\item \label{circ-orient-init}
	Similarly, there is a unique integer $v(\sigma) \in
\{1,2,\ldots,|D|\}$ with $\sigma = \init E^{-1} (E N^{-1})^{v(\sigma)}$; the
square~$\sigma$ travels east iff $v(\sigma) < v(\term)$.
	
	\end{enumerate}
 \end{cor}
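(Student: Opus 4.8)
The plan is to prove part~(1) by an induction ``around'' the terminal diagonal~$D$, reading off the travel directions of its squares one at a time from \cref{term-travels}, and then to deduce part~(2) from~(1) by a change of index. Since, by definition, $D$ is the $\langle NE^{-1}\rangle$-orbit of each of its squares, we may list $D = \{\term_1, \term_2, \dots, \term_{|D|}\}$ with $\term_k = \term(NE^{-1})^k$ and $\term_{|D|} = \term$; under this labelling $u(\term_k) = k$. By \cref{iotaE=tau}, both $\init E^{-1}$ and $\init N^{-1}$ belong to~$D$; set $j = u(\init E^{-1})$, so $\init E^{-1} = \term_j$, and observe the elementary identity $\init N^{-1} = (\init E^{-1})\,EN^{-1} = \term_{j-1}$, which places $\init N^{-1}$ immediately before $\init E^{-1}$ in the cyclic listing. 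Finally, every square of~$D$ other than~$\term$ has exactly one out-edge in~$\hampath$, so travels either east or north but not both, while $\term$ travels in neither direction.

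Now run two inductions. \emph{Forward, from $\term_1 = \term NE^{-1}$:} unless $\term N = \init$, the first bullet of \cref{term-travels} gives that $\term_1$ travels east, and the third bullet propagates this --- $\term_{k+1}$ travels east whenever $\term_k$ does and $\term_k \ne \init N^{-1}$ (equivalently $\term_k N \ne \init$) --- so $\term_1, \dots, \term_{j-1}$ all travel east, the sweep halting exactly at $\term_{j-1} = \init N^{-1}$. \emph{Backward, from $\term_{|D|-1} = \term EN^{-1}$:} unless $\term E = \init$, the second bullet gives that $\term_{|D|-1}$ travels north, and the fifth bullet propagates this downward --- $\term_{k-1}$ travels north whenever $\term_k$ does and $\term_k \ne \init E^{-1}$ (equivalently $\term_k E \ne \init$) --- so $\term_j, \dots, \term_{|D|-1}$ all travel north, halting at $\term_j = \init E^{-1}$. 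Since $\{1,\dots,j-1\}$ and $\{j,\dots,|D|-1\}$ partition $\{1,\dots,|D|-1\}$ and $\term_{|D|} = \term$ travels in neither direction, this says exactly that $\term_k$ travels east iff $k < j = u(\init E^{-1})$, which is part~(1).

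For part~(2), write $\sigma = \init E^{-1}(EN^{-1})^{v(\sigma)} = \term(NE^{-1})^{j}(NE^{-1})^{-v(\sigma)}$, so that $u(\sigma) + v(\sigma) \equiv j \pmod{|D|}$; the choice $\sigma = \term$ gives $v(\term) = j = u(\init E^{-1})$. A one-line inspection of this congruence (treating the cases $v(\sigma) < j$ and $v(\sigma) \ge j$ separately) converts the criterion ``$u(\sigma) < j$'' of part~(1) into the criterion ``$v(\sigma) < v(\term)$'' of part~(2). (Alternatively, part~(2) may be obtained from the symmetric pair of inductions, now seeded by the fact that $\init E^{-1}$ cannot travel east because the initial square~$\init$ has no in-edge in~$\hampath$.)

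The only place care is needed is at the two ``seams''. If $\term N = \init$, the forward induction has no base case --- but then $j = 1$, so the backward induction already accounts for all of $D \smallsetminus \{\term\}$; symmetrically, if $\term E = \init$ then $j = |D|$ and the forward induction alone suffices; and if $\term E = \term N = \init$ --- equivalently $\term = (m-1,n-1)$ and $\init = (0,0)$, so that $D = \{\term\}$ --- the statement is vacuous. I expect the bookkeeping of exactly where each sweep halts (via the identity $\init N^{-1} = (\init E^{-1})\,EN^{-1}$) together with the verification of these boundary cases to be the only real obstacle; everything else is a direct transcription of \cref{term-travels}.
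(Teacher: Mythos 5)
Your proof is correct and is essentially the paper's own argument: the paper gives no details beyond the remark that the corollary ``follows from \cref{term-travels} by induction'' (it is quoted from \cite{Forbush}), and your two sweeps around the cyclic listing $\term_k = \term(NE^{-1})^k$ --- seeded by the first two bullets of \cref{term-travels}, propagated by the third and fifth, and halting at $\init N^{-1}$ and $\init E^{-1}$ respectively --- are exactly that induction, with the boundary cases $\term N=\init$, $\term E=\init$ handled correctly.
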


\begin{cor}[{}{\cite[Cor.~2.13]{Forbush}}] \label{uniqueness}
 A hamiltonian path is uniquely determined by specifying
 \begin{enumerate}
 \item \label{initial-choice} its initial square;
 \item \label{terminal-choice} its terminal square; and
 \item \label{non-terminal-choice} which of its non-terminal diagonals travel
east.
 \end{enumerate}
 \end{cor}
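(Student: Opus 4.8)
The plan is to show that the data in \pref{initial-choice}, \pref{terminal-choice}, and \pref{non-terminal-choice} determine, for \emph{every} square~$\sigma$ of~$\Bmn$, whether $\sigma$ travels east or travels north; here $\init$ and~$\term$ denote the initial and terminal squares. This suffices: the edge set of a hamiltonian path from~$\init$ to~$\term$ has exactly one outgoing edge at each square other than~$\term$ — the edge to~$\sigma E$ if $\sigma$ travels east, the edge to~$\sigma N$ if $\sigma$ travels north — and none at~$\term$, so knowing all the travel directions pins down the edge set; and a hamiltonian path is recovered from its edge set by starting at the unique square of in-degree~$0$ (which is~$\init$) and following edges. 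Hence two hamiltonian paths sharing the data \pref{initial-choice}--\pref{non-terminal-choice} are equal.

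To recover the travel directions I would sort the squares by the diagonal containing them. By \cref{iotaE=tau}, the terminal diagonal~$D$ is the one containing~$\init E^{-1}$ and~$\init N^{-1}$, so it is determined by~\pref{initial-choice} alone (and it also contains~$\term$, by definition of the terminal diagonal). If $\sigma$ lies in a non-terminal diagonal~$D'$, then \cref{DiagMustTravel} says $D'$ travels east or $D'$ travels north as a whole, so the datum~\pref{non-terminal-choice} directly records whether $\sigma$ travels east. There remain the squares of~$D$ itself. Apart from~$\term$ (which travels neither east nor north), each such~$\sigma$ is governed by \fullcref{circ-orient}{term}: there is a unique $u(\sigma) \in \{1,\dots,|D|\}$ with $\sigma = \term(NE^{-1})^{u(\sigma)}$, and $\sigma$ travels east if and only if $u(\sigma) < u(\init E^{-1})$. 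Since $u(\sigma)$ depends only on~$\sigma$ and~$\term$ while $u(\init E^{-1})$ depends only on~$\init$ and~$\term$, the travel direction of~$\sigma$ is determined by \pref{initial-choice} and~\pref{terminal-choice}. Every square is now accounted for, completing the argument.

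I do not anticipate a real obstacle: all of the substantive content is already supplied by \cref{DiagMustTravel,iotaE=tau,circ-orient}, and what is left is bookkeeping. The points that warrant a moment's care are confirming that the dichotomy of \fullcref{circ-orient}{term} genuinely applies to every square of the terminal diagonal — in particular on small boards, where squares such as~$\init E^{-1}$, $\init N^{-1}$, and~$\term$ may coincide — and recording the elementary fact that a hamiltonian path in a digraph is uniquely reconstructible from its set of edges.
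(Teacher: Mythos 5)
Your argument is correct and is essentially the intended one: the paper presents this corollary (citing \cite[Cor.~2.13]{Forbush}) as an immediate consequence of \cref{DiagMustTravel}, \cref{iotaE=tau}, and \cref{circ-orient}, which is exactly how you reconstruct the travel direction of every square and hence the edge set of the path. The only addition you make is to spell out the bookkeeping (recovering the path from its edge set, and handling $\term$ separately), which is harmless.
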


\subsection{Further restrictions on hamiltonian paths}

\begin{prop}[{}{\cite[Thm.~3.2]{Forbush}}] \label{00NotInit}
If $m,n\ge 3$, then $(0,0)$ is not the initial square of any hamiltonian path in $\Bmn$. Therefore, $D_{m+n-2}$
is not the terminal diagonal of any hamiltonian path\/ \textup(and $\Bmn$ does not have a hamiltonian cycle\/\textup).
\end{prop}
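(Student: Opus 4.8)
The plan is to argue by contradiction: suppose $\hampath$ is a hamiltonian path in~$\Bmn$ with initial square $\init=(0,0)$. First I would locate the terminal square. Solving $(p,q)E=(0,0)$ forces $p=m-1$, $q=n-1$, and the same holds for $(p,q)N=(0,0)$, so $\init E^{-1}=\init N^{-1}=(m-1,n-1)$. By \cref{iotaE=tau} the terminal diagonal is the diagonal containing $(m-1,n-1)$, which by \cref{diags(m+n-3)} is $D_{m+n-2}=\{(m-1,n-1)\}$; hence $\term=(m-1,n-1)$ and the terminal diagonal is a single square. (One can also see this directly: $\init E^{-1}$ would be the predecessor of~$\init$ if it travelled east, which is impossible, and likewise $\init N^{-1}$ cannot travel north; as these two squares coincide, their common value $(m-1,n-1)$ travels in neither direction and so must be~$\term$.)

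Consequently every diagonal except $D_{m+n-2}$ is non-terminal, so by \cref{DiagMustTravel} each of $D_0,\dots,D_{m+n-3}$ travels entirely east or entirely north, and by \cref{uniqueness} such a choice of directions determines~$\hampath$ completely. The key preliminary point is that \emph{any} such choice of directions is self-consistent: giving every square other than~$\term$ its forced outgoing arc makes every square other than~$\init$ the head of a unique arc. Indeed $\rho E^{-1}$ and $\rho N^{-1}$ always lie in one diagonal --- because $\rho N^{-1}=(\rho E^{-1})EN^{-1}$, and $\sigma$ and $\sigma EN^{-1}$ lie in the same diagonal --- so exactly one of them points to~$\rho$, the only exception being $\rho=\init$, where both equal~$\term$, which has no outgoing arc. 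Hence the arcs form one path from~$\init$ to~$\term$ together with a (possibly empty) family of pairwise disjoint cycles, all avoiding~$\{\init,\term\}$. What remains to prove is that, for every choice of directions, some such cycle occurs --- equivalently, the path starting at~$(0,0)$ and following the forced directions fails to visit all $mn$ squares before reaching~$\term$.

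Establishing this is the heart of the matter. I would trace the path from~$(0,0)$, using the elementary fact that the subdiagonal index $p+q$ increases by exactly~$1$ at every step except a ``wrap'' (leaving the right edge travelling east, or the top edge travelling north). Thus the path is a concatenation of monotone stretches --- each running from a square on the left or bottom edge of the board to a square on the right or top edge --- joined by wraps; and since a wrap is the only kind of step that can move the path off a monotone stretch, and wraps happen only at boundary squares, whose travel directions are governed by the diagonals that meet the boundary, one follows the forced directions around the boundary and shows the path is forced to re-enter a configuration it has already traversed, closing up a cycle. Concretely, in the two sub-cases ``$D_0$ travels east'' and ``$D_0$ travels north'' --- interchanged by the transpose when $m=n$, so only one is needed there, while for $m>n$ both are treated --- one exhibits an explicit cycle among the boundary squares; for example, when $D_0$ travels east and every diagonal meeting column~$0$ above~$(0,0)$ travels north, the squares of columns~$0$ and $m-1$ other than $\init$ and $\term$ close up into a cycle. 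Reducing the remaining direction patterns to a manageable finite case analysis --- rather than examining exponentially many --- so that a cycle always appears is the step I expect to be the main obstacle.

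Finally the stated consequences follow at once. A hamiltonian path has terminal diagonal $D_{m+n-2}$ iff $\init E^{-1}\in D_{m+n-2}=\{(m-1,n-1)\}$, i.e.\ iff $\init=(0,0)$, which has just been ruled out; so $D_{m+n-2}$ is never a terminal diagonal. And a hamiltonian cycle, traversed starting from~$(0,0)$, would be a hamiltonian path with initial square~$(0,0)$, again impossible; so $\Bmn$ has no hamiltonian cycle.
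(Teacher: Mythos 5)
This proposition is not proved in the paper at all: it is imported verbatim from \cite[Thm.~3.2]{Forbush}, so there is no in-paper argument to compare yours against. Judged on its own, your write-up gets the framing right but does not actually prove the statement. The correct parts: solving $(p,q)E=(0,0)$ and $(p,q)N=(0,0)$ does give $\init E^{-1}=\init N^{-1}=(m-1,n-1)$, so by \cref{iotaE=tau} and \cref{diags(m+n-3)} the terminal diagonal is the singleton $D_{m+n-2}$ and $\term=(m-1,n-1)$; and your observation that any assignment of directions to the $m+n-2$ non-terminal diagonals yields a functional digraph consisting of one $\init$-to-$\term$ path plus disjoint cycles is sound (each $\rho\neq\init$ has $\rho E^{-1}$ and $\rho N^{-1}$ in a common diagonal, hence exactly one forced predecessor). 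The closing paragraph deducing the two consequences is also fine.

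The gap is that everything after that is the theorem, and you leave it unproved. You must show that \emph{every} one of the $2^{m+n-2}$ east/north assignments produces at least one cycle, and you say explicitly that reducing this to a finite case analysis ``is the step I expect to be the main obstacle.'' The single configuration you do verify (all diagonals through column~$0$ above $(0,0)$ travelling north, $D_0$ east) is correct --- it closes up the cycle $(0,1)\to\cdots\to(0,n-1)\to(m-1,0)\to\cdots\to(m-1,n-2)\to(0,1)$ --- but it is one point in an exponentially large configuration space, and the description of the path as ``monotone stretches joined by wraps'' restates the problem rather than solving it. A complete argument needs a mechanism that collapses the case analysis: for instance, an induction over the diagonals $D_0,D_1,\dots$ using \cref{DiagMustTravel} to propagate forced directions from the boundary inward until a cycle is exhibited in each branch, or a counting/parity obstruction. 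As it stands, the proposal is a correct reduction of the problem plus an acknowledged missing core, so it cannot be accepted as a proof.
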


\begin{lem}[{}{\cite[Lem.~3.4]{Forbush}}] \label{OuterEastOK}
Suppose $\subdiag_a \cup \subdiag_b$ is the terminal diagonal of a hamiltonian path~$\hampath$ in~$\Bmn$, with $m,n \ge 3$ and $a \le b$. Choose $(p,q) \in \subdiag_{b+1}$, and let $P$ be
the unique path in~$\hampath$ that starts at $(p,q)$ and ends in~$\subdiag_a$, without
passing through $\subdiag_a$. Then the terminal square of~$P$ is the inverse of $(p,q)$.
 \end{lem}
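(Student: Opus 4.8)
The plan is to reduce the assertion to one involving only the non-terminal diagonals lying outside the terminal diagonal, and then to verify it by following the path.

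First, the case $a=0$: here $\subdiag_{b+1}=\subdiag_{m+n-2}=\{(m-1,n-1)\}$, both out-edges of $(m-1,n-1)$ run to $(0,0)\in\subdiag_0=\subdiag_a$, so $P$ is a single edge ending at $(0,0)=\widetilde{(m-1,n-1)}$. Assume $a\ge1$ henceforth, and put $\mathcal{O}=\subdiag_0\cup\cdots\cup\subdiag_{a-1}\cup\subdiag_{b+1}\cup\cdots\cup\subdiag_{m+n-2}$. A routine inspection of $E$ and $N$ yields three facts: every out-edge of a square of $\mathcal{O}$ ends in $\mathcal{O}\cup\subdiag_a$; no out-edge joining two squares of $\mathcal{O}$ ends in $\subdiag_{b+1}$; and the only out-edges from $\mathcal{O}$ into $\subdiag_a$ are the ``wrap-around'' edge(s) leaving $\subdiag_{b+1}$ and the ordinary edges leaving $\subdiag_{a-1}$. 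Since $\term\in\subdiag_a\cup\subdiag_b$ lies outside $\mathcal{O}$, while $\hampath$ has no repeated vertices and $\mathcal{O}$ is finite, the forward path in $\hampath$ from $(p,q)\in\subdiag_{b+1}$ stays in $\mathcal{O}$ until it first meets $\subdiag_a$; this forward path is $P$. Every square of $\mathcal{O}$ belongs to one of the diagonals $D_0,\dots,D_{a-1},D_{m+n-2}$; by \cref{DiagMustTravel} the move $\hampath$ makes at such a square is forced by the travel-direction of that diagonal, and $(m-1,n-1)$ travels to $(0,0)$ either way. Hence $P$ --- and in particular its terminal square --- is determined by which of $D_0,\dots,D_{a-1}$ travel east, so it suffices to prove: for every assignment of a travel-direction to each of $D_0,\dots,D_{a-1}$, the walk that begins at $(p,q)$, leaves each square of $\mathcal{O}$ in its diagonal's assigned direction (and leaves $(m-1,n-1)$ either way), and stops upon entering $\subdiag_a$, is a path whose terminal square is $\widetilde{(p,q)}$.

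This last statement I would prove by induction on $a$. The base case $a=1$ is a direct check: $\subdiag_{b+1}=\{(m-2,n-1),(m-1,n-2)\}$, each of these two squares lies on an edge of the board, and for each of the two directions of $D_0$ one verifies by hand that the walk from either square ends at that square's inverse. In the step $a\ge2$: if the assigned direction of $D_{a-1}$ carries $(p,q)$ --- necessarily the east-edge or north-edge square of $\subdiag_{b+1}$ --- straight into $\subdiag_a$ with a twist, then $P$ is one edge and a direct computation identifies its endpoint as $\widetilde{(p,q)}$. Otherwise the first edge sends $(p,q)$ into $\subdiag_{b+2}$, the rest of the walk runs through $\subdiag_{b+2},\dots,\subdiag_{m+n-2},\subdiag_0,\dots,\subdiag_{a-1}$ (it cannot return to $\subdiag_{b+1}$) steered only by $D_0,\dots,D_{a-2}$, and I would recognize this remainder --- up to the moment it first reaches $\subdiag_{a-1}$ --- as an instance of the statement with $a$ replaced by $a-1$, obtained by deleting the diagonals $D_{a-1}$ and $D_{m+n-2}$ and matching what is left of $\mathcal{O}\cup\subdiag_a$ with the analogous region of a smaller projective checkerboard. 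The inductive hypothesis pins down the square of $\subdiag_{a-1}$ that the walk reaches, after which one further edge --- in the direction assigned to $D_{a-1}$, the diagonal that also contains $(p,q)$ --- enters $\subdiag_a$, and a short coordinate computation, organized around the identity $\widetilde\sigma=\sigma E^m N^n$ (so that the twists accumulated at the wrap-arounds are exactly balanced), identifies the landing square as $\widetilde{(p,q)}$.

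The main obstacle is the inductive step, and within it the identification of the ``leftover'' region with a smaller board: whether a subdiagonal of $\mathcal{O}$ carries an east-edge square, a north-edge square, both, or neither depends on how $a$ compares with $\min(m,n)$, so the leftover region takes a handful of shapes, each of which must be matched with some $\boardonly_{m',n'}$. An alternative that sidesteps the reduction is to follow the coordinates of the walk directly; then the delicate bookkeeping is that of successive wrap-arounds and of the corner square $(m-1,n-1)$.
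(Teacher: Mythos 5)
Your reduction is sound: the three structural facts about $\mathcal{O}$ are correct, the forward subpath of $\hampath$ from $(p,q)$ is indeed $P$, and by \cref{DiagMustTravel} it is legitimate to prove the stronger statement that the deterministic walk ends at $\widetilde{(p,q)}$ for \emph{every} assignment of directions to $D_0,\dots,D_{a-1}$; your base cases $a=0,1$ also check out. (For the record, this paper does not prove the lemma at all---it is imported from \cite[Lem.~3.4]{Forbush}---so there is no in-paper argument to compare against.) The genuine gap is your inductive step. Deleting $D_{a-1}$ and $D_{m+n-2}$ does not produce the outer region of a smaller projective checkerboard: the deleted set is neither a row, a column, nor (in general) a rowful diagonal, and whether a leftover subdiagonal $\subdiag_j$ contains an east-edge square ($j\ge m-1$) or a north-edge square ($j\ge n-1$)---which is what governs the wrap-arounds---depends on $m$ and $n$ separately, so the leftover dynamics are generally not those of any $\board{m'}{n'}$. (For $m=n=5$, $a=2$, the leftover outer squares number $3$, while the $a=1$ outer region of \emph{any} board has $4$ squares.) Secondarily, you assert that the one further edge leaving the first square of $\subdiag_{a-1}$ reached must enter $\subdiag_a$; a priori that edge could wrap back into $\subdiag_{b+2}$ (this can happen from the square $(a-n,n-1)$ when $a\ge n$ and $D_{a-1}$ travels north), and ruling it out requires already knowing \emph{which} square of $\subdiag_{a-1}$ has been reached.

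Both problems disappear if you run the induction on $a$ inside the fixed board $\Bmn$. Let Claim$(a)$ be your strengthened statement, with stopping set $\subdiag_a$, starting set $\subdiag_{b+1}$ where $b=m+n-3-a$, and directions assigned to $D_0,\dots,D_{a-1}$. If the direction $X$ of $D_{a-1}$ wraps $(p,q)$, it lands at $\widetilde{(p,q)}\in\subdiag_a$, as you say. Otherwise $\sigma_1=(p,q)X\in\subdiag_{b+2}$, and the same bookkeeping you did for $\mathcal{O}$ shows that every step from $\subdiag_{b+2}\cup\dots\cup\subdiag_{m+n-2}\cup\subdiag_0\cup\dots\cup\subdiag_{a-2}$ either stays in that set or enters $\subdiag_{a-1}$, and is steered by $D_0,\dots,D_{a-2}$ alone; this is literally the situation of Claim$(a-1)$ on $\Bmn$ itself, with no change of board. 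Hence the walk first meets $\subdiag_{a-1}$ at $\widetilde{\sigma_1}=\widetilde{(p,q)}X^{-1}$ (using $\widetilde{\sigma X}=\widetilde{\sigma}X^{-1}$, as in \cref{InverseHP}). Since $\widetilde{\sigma_1}\in\subdiag_{a-1}\subseteq D_{a-1}$, the next step is in direction $X$ and lands at $\widetilde{\sigma_1}X=\widetilde{(p,q)}\in\subdiag_a$; in particular it cannot wrap back. With the inductive step replaced by this same-board version, your argument closes.
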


\begin{defn}
Let $S_a \cup S_b$ be the terminal diagonal of a hamiltonian path, with $a \le b$, and let $S_i \cup S_j$ be some other diagonal of~$\Bmn$, with $i \le j$ and $i + j < m + n - 2$. We say that:
	\begin{enumerate}
	\item $S_i \cup S_j$ is an \emph{outer diagonal} if $i < a$ (or, equivalently, $j > b$).
	\item $S_i \cup S_j$ is an \emph{inner diagonal} if $i > a$ (or, equivalently, $j < b$).
	\end{enumerate}
\end{defn}

\Cref{OuterEastOK} has the following important consequence.

\begin{cor}[cf.\ {\cite[Thm.~3.5]{Forbush}}] \label{OuterCan}
 Assume that $\hampath$ is a hamiltonian path from~$\init$ to~$\term$ in~$\Bmn$, with $m \ge n \ge 3$. Define $\hampath_E$ and~$\hampath_N$ to be the subdigraphs of~$\Bmn$, such that%
\noprelistbreak
	\begin{itemize}
	\item $\init$ has invalence~$0$, but the invalence of all other squares is~$1$ in both $\hampath_E$ and~$\hampath_N$,
	\item $\term$~has outvalence~$0$, but the outvalence of all other squares is~$1$ in both $\hampath_E$ and~$\hampath_N$,
	\item each inner diagonal travels exactly the same way in~$\hampath_E$ and~$\hampath_N$ as it does in~$\hampath$,
	and
	\item each outer diagonal travels east in~$\hampath_E$, but travels north in~$\hampath_N$.
	\end{itemize}
Then:
	\begin{enumerate}
	\item \label{OuterCan-East}
	$\hampath_E$ is a hamiltonian path from~$\init$ to~$\term$. \refnote{OuterCanPf}
	\item \label{OuterCan-North}
	$\hampath_N$ is a hamiltonian path from~$\init$ to~$\term$ if and only if the diagonal $S_{n-1} \cup S_{m-2}$ is not outer.
	\end{enumerate}
\end{cor}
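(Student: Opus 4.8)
The plan is to handle $\hampath$, $\hampath_E$, and $\hampath_N$ by a common argument. Write $\subdiag_a\cup\subdiag_b$ for the terminal diagonal of~$\hampath$, with $a\le b$, so $a+b=m+n-3$ and (as $m\ge n\ge 3$) $a\le m-2$; put $O=\bigl(\bigcup_{i<a}\subdiag_i\bigr)\cup\bigl(\bigcup_{j>b}\subdiag_j\bigr)$. First I would record an elementary fact, read off from the definitions of~$E$ and~$N$: every directed edge of~$\Bmn$ that leaves~$O$ has its head in~$\subdiag_a$, and every directed edge that enters~$O$ has its head in~$\subdiag_{b+1}$. I would also note that $\hampath_E$ (resp.~$\hampath_N$) is just $\hampath$ with every outer diagonal re-routed to travel east (resp.~north) and the inner and terminal diagonals, $\init$, and $\term$ left alone---so the three digraphs agree on every edge whose tail lies outside~$O$, and the valences asserted in the statement hold because the two possible in-neighbours $\sigma E^{-1},\sigma N^{-1}$ of any square~$\sigma$ lie in a single diagonal. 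Consequently each of the three is a disjoint union of one path from~$\init$ to~$\term$ and some directed cycles, and its restriction to~$O$ is a disjoint union of arcs, each starting at a square of~$\subdiag_{b+1}$ and (as $\term\notin O$) leaving~$O$ through~$\subdiag_a$. So for $\hampath_E$ and $\hampath_N$ it suffices to prove: \emph{(i)}~the arc inside~$O$ that starts at $\rho\in\subdiag_{b+1}$ leaves~$O$ at the square~$\widetilde\rho\in\subdiag_a$; and \emph{(ii)}~these arcs exhaust~$O$. Given (i) and (ii), the outer arcs splice the inner arcs together in exactly the pattern they do for~$\hampath$, so the digraph is a single path, and since the inner arcs cover $\subdiag_a,\dots,\subdiag_b$ and the outer arcs cover~$O$, this path is hamiltonian. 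For~$\hampath$ itself, (i) is \cref{OuterEastOK} (the elementary fact keeps the relevant subpath of~$\hampath$ inside~$O$ until it meets~$\subdiag_a$) and (ii) is automatic.

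For~$\hampath_E$ I would trace the arc from $\rho=(p,q)\in\subdiag_{b+1}$ by hand: it runs east along row~$q$ to $(m-1,q)$, wraps to $(0,n-1-q)$, and runs east along row~$n-1-q$. Since $p+q=b+1$ forces $q\ge n-1-a$, hence $n-1-q\le a$, the arc stays in~$O$ until it reaches $(m-1-p,\,n-1-q)=\widetilde\rho\in\subdiag_a$; this is~(i). For~(ii), any directed cycle inside the restriction of~$\hampath_E$ to~$O$ would be a cycle of the permutation $\sigma\mapsto\sigma E$ of~$\Bmn$ contained in~$O$; the cycles of that permutation are the unions of a row with its mirror row, and a row spans a block of~$m$ consecutive subdiagonals---too long to fit in either of the intervals $\{\subdiag_i:i<a\}$ and $\{\subdiag_j:j>b\}$ making up~$O$ (using $a\le m-2$). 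So nothing is left over, and $\hampath_E$ is a hamiltonian path.

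For~$\hampath_N$, the mirror computation gives~(i): the all-north arc from $\rho=(p,q)$ runs up column~$p$, wraps at row~$n-1$ to $(m-1-p,0)$, and runs up column~$m-1-p$, staying in~$O$ (now because $p\ge m-1-a$) until it meets~$\widetilde\rho\in\subdiag_a$. But (ii) may fail, because the cycles of $\sigma\mapsto\sigma N$ are the unions of a column with its mirror column, spanning blocks of only~$n$ consecutive subdiagonals, which \emph{can} fit inside~$O$. I would check that among these column-pair cycles the ``vertical wrap'' cycle $C\colon(0,0)\to\cdots\to(0,n-1)\to(m-1,0)\to\cdots\to(m-1,n-1)\to(0,0)$ is the one most easily contained in~$O$, and that $C\subseteq O$ holds if and only if $\subdiag_{n-1}\cup\subdiag_{m-2}$ is outer. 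Thus if $\subdiag_{n-1}\cup\subdiag_{m-2}$ is not outer then (ii) holds and $\hampath_N$ is a hamiltonian path, whereas if it is outer then $C\subseteq O$ is a directed cycle of~$\hampath_N$ disjoint from its path from~$\init$ to~$\term$, so $\hampath_N$ is not a hamiltonian path.

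I expect the crux to be the verification of (i) and (ii): following the all-east and all-north arcs through~$O$ amounts to tracking how the two wrap rules shuttle squares between the high subdiagonals $\subdiag_{m-1},\dots,\subdiag_{m+n-2}$ and the low subdiagonals $\subdiag_0,\subdiag_1,\dots$, which should split into a handful of cases according to the size of~$a$ relative to $n-1$ and~$m-1$. Checking that $C$ is the ``innermost'' of the column-pair cycles, and that $C\subseteq O$ exactly when $\subdiag_{n-1}\cup\subdiag_{m-2}$ is outer, is a short index computation. It remains only to confirm the elementary fact about edges across the boundary of~$O$ and that re-routing the outer diagonals preserves the valences listed in the statement, so that $\hampath_E$ and $\hampath_N$ really are the digraphs defined there.
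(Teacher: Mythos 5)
Your proposal is correct, and it follows the route the paper intends: the statement is presented as a consequence of \cref{OuterEastOK}, and your argument uses exactly that lemma to match the exit square $\widetilde\rho$ of each outer arc, then verifies by direct computation that the all-east (resp.\ all-north) arcs from $\subdiag_{b+1}$ exit at the same squares and that the only stray cycles are the $N$-cycles through a column and its mirror, which lie in the outer region precisely when $a \ge n$, i.e., when $\subdiag_{n-1} \cup \subdiag_{m-2}$ is outer. The index computations you defer all check out (in particular $a \le m-2$ rules out $E$-cycles in the outer region, and the cycle through columns $p$ and $m-1-p$ lies in the outer region iff $\min(p, m-1-p) \le a-n$, so the column-$0$ cycle is indeed the critical one), so no gap remains.
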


\section{Hamiltonian paths in which all non-terminal diagonals travel north} \label{AllNorth}

We eventually need to understand all of the hamiltonian paths in~$\Bmn$, but this \lcnamecref{AllNorth} considers only the much simpler special case in which every non-terminal diagonal is required to travel north. Although this may seem to be a very restrictive assumption, \cref{stretch} below 
will allow us to obtain the general case from this one.

\begin{prop} \label{Init=TermE}
Assume $S_a \cup S_b$ is the terminal diagonal of a hamiltonian path~$\hampath$ in~$\Bmn$, with $m \ge n \ge 3$ and $a \le b$.  Let $\term_+$ be the southeasternmost square in~$S_b$. If all non-terminal diagonals travel north in~$\hampath$, then $a \le m-2 \le b$, and $\term_+E$ is the initial square of either $\hampath$ or the inverse of~$\hampath$ \textup(or the transpose or transpose-inverse of~$\hampath$, if $m = n = a + 2 = b + 1$\textup), unless $a + 1 = b = n = m-2$, in which case the initial square \textup(of either $\hampath$ or~$\widetilde\hampath$\textup) might also be $\term_+ = (n, 0)$.
\end{prop}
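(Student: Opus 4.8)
The plan is to pin down the location of the terminal diagonal first, and then exploit the extreme rigidity of a hamiltonian path whose only nontrivial diagonal is the terminal one.

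\emph{The bound $a\le m-2\le b$.} The first move is to observe that, since every non-terminal diagonal of $\hampath$ — inner \emph{and} outer — travels north, $\hampath$ is exactly the path $\hampath_N$ that \cref{OuterCan} builds from it: it has the same initial and terminal square, its inner diagonals agree with $\hampath$ by construction, and its outer diagonals travel north by hypothesis. Hence $\hampath_N$ is a hamiltonian path, so by \fullcref{OuterCan}{North} the diagonal $S_{n-1}\cup S_{m-2}$ is not outer; writing its two indices in increasing order and unwinding the definition of ``outer'' gives $a\le n-1$ if $m>n$ and $a\le n-2$ if $m=n$. In either case $a\le m-2$, and then $a+b=m+n-3$ forces $b\ge m-2$. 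I will also use that $\widetilde\hampath$ satisfies the same hypotheses — by \cref{InverseHP} passing to the inverse reverses no $E/N$ label, and its terminal diagonal is again $S_a\cup S_b$ — and, when $m=n$, that $\hampath^{*}$ is a hamiltonian path with the roles of north and east interchanged; since the conclusion is unchanged by $\hampath\mapsto\widetilde\hampath$, I may normalise freely.

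\emph{Locating $\init$ and the east-arc.} By \cref{iotaE=tau}, $\init E^{-1}$ and $\init N^{-1}$ lie in $S_a\cup S_b$. Writing $\init=(p,q)$, running through the cases $q\ge1$ and $q=0$ (with the twist), and using \cref{00NotInit} to exclude $\init=(0,0)$, one gets $\init\in S_{a+1}\cup S_{b+1}$; combining this with $\init E^{-1}\in S_a\cup S_b$ and $a\le m-2\le b$ then shrinks the candidate list sharply — if $\init$ lies in column $0$ then $\init=(0,a+1)$, and if $\init$ lies in row $0$ then $\init\in\{(a+1,0),(b+1,0)\}$, the second only when $b=m-2$. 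For the terminal diagonal, a short computation (again using $a\le m-2\le b$) shows that, viewed as a cycle under $NE^{-1}$, the set $D=S_a\cup S_b$ runs from the south-east corner $\term_+$ of $S_b$ up through $S_b$ to its north-west corner, jumps to the south-east corner $(a,0)$ of $S_a$, runs up through $S_a$ to its north-west corner, and returns to $\term_+$. By \cref{circ-orient} the squares of $D$ that travel east form an arc of length $U-1\ge1$ sitting immediately to the north-west of $\term$, and $\init E^{-1}$ is the square immediately to the north-west of that arc.

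\emph{The trace.} Off the east-arc every square travels north, so $\hampath$ is a concatenation of vertical runs up columns, glued at the top edge by the twist (which sends column $p$ to column $m-1-p$) and glued across $D$ by the east-steps of the arc — each a rightward shift of one column, except that the east-step out of $\term_+$, when present, wraps column $m-1$ back to column $0$. The heart of the argument is to trace $\hampath$ forward from $\init$ — equivalently, backward from $\term$, using \cref{term-travels,OuterEastOK} to detect when the backward trace drops out of a column — and to show that, for the resulting subdigraph to be a single spanning path, $\term_+$ must abut the east-arc in the cyclic order on $D$: either $\term_+=\init E^{-1}$, giving $\init=\term_+E$ directly, or $\term_+$ abuts the arc on the other side, which (checking $b\ge m-1$ and $b=m-2$ separately, and using $\widetilde{\term_+}=\term_+E$ in the first) says exactly that $\term_+E$ is the initial square of $\widetilde\hampath$. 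Along the way one computes $\term_+E=(0,a+1)$ when $b\ge m-1$ and $\term_+E=(m-1,0)$ when $b=m-2$, consistent with the candidate list from the previous step apart from the stray $(a+1,0)$.

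\emph{Small cases, and the obstacle.} Two configurations need separate finite checks. When $m=n$ and $D=S_{n-2}\cup S_{n-1}$ the transpose $\hampath\mapsto\hampath^{*}$ is an extra symmetry and the dichotomy above acquires the additional solutions recorded in the statement; here I would enumerate the few pairs $(\init,\term)\in(S_{a+1}\cup S_{b+1})\times(S_a\cup S_b)$ compatible with the previous step and test each with \cref{uniqueness}. When $a+1=b=n=m-2$ the wrap-around east-step out of $\term_+=(n,0)$ degenerates (its target column $0$ coincides with an index already forced by $a+1=b$), so the trace additionally admits the endpoint $\init=(a+1,0)=\term_+$, which I would confirm by exhibiting the path. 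I expect the trace to be the crux: tracking how the vertical runs interleave with an east-arc that may spill from $S_b$ into $S_a$, so that every column is covered exactly the right number of times — in particular excluding $\init=(a+1,0)$ when $b=m-2$ with $m>n+2$ — is where the bookkeeping genuinely depends on the parity of $m$ and on which of $S_a,S_b$ contains $\term$.
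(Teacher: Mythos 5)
Your opening step is correct and matches the paper: from \fullcref{OuterCan}{North} the diagonal $S_{n-1}\cup S_{m-2}$ cannot be outer, which yields $a\le m-2\le b$ (the paper phrases this as $a\le n-1$ and $m-2\le b$, disposing of the degenerate case $a=n-1=m-1$). Your localization of $\init$ via \cref{iotaE=tau} and \cref{00NotInit} is also sound, and the picture of $\hampath$ as vertical runs glued by an ``east-arc'' in the terminal diagonal is a fair description of the structure. But the proof stops exactly where the proposition's content begins. The assertion that connectivity of the spanning subdigraph forces $\term_+$ to abut the east-arc in the cyclic order on $D$ is never established; you state it as the conclusion of a ``trace'' and then concede that ``the bookkeeping genuinely depends on the parity of $m$ and on which of $S_a,S_b$ contains $\term$'' and that you ``expect the trace to be the crux.'' That bookkeeping \emph{is} the theorem. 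The paper carries it out by supposing the initial square is not as described, splitting on whether $\term_+$ lies in column $m-1$ or not, and in each configuration using \cref{term-travels} and \cref{circ-orient} to force an explicit closed walk (e.g.\ $[\term_+](N^{2n})$, or $(0,m-1)\east(1,m-1)\north(m-2,0)\east(m-1,0)\east(0,m-1)$) to lie inside $\hampath$ --- a contradiction. Nothing in your write-up substitutes for those forced-cycle derivations.

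The two exceptional clauses in the statement are likewise where the paper's proof spends most of its effort, and your treatment of them (``I would enumerate the few pairs \dots and test each with \cref{uniqueness}'', ``which I would confirm by exhibiting the path'') is a plan, not a proof; in particular the case $a+1=b=n=m-2$ requires a chain of deductions about $(0,n-1)$, $(1,n-1)$, $(2,n-1)$ and $(m-3,0)$, applied both to $\hampath$ and to $\widetilde\hampath$, before the final contradiction appears, and none of that is present. A smaller but symptomatic slip: you claim the east-travelling squares of $D$ form an arc of length $u(\init E^{-1})-1\ge 1$, but \fullcref{circ-orient}{term} allows $u(\init E^{-1})=1$, i.e.\ an empty arc (every square of $D$ other than $\term$ travels north), so even the setup for your trace needs repair. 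As it stands the proposal establishes only the first conclusion ($a\le m-2\le b$) and leaves the identification of the initial square unproved.
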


\begin{proof}
From \fullcref{OuterCan}{North}, we know that $a \le n-1$ and $m - 2 \le b$. Since $n \le m$, this immediately implies $a \le m - 2 \le b$, unless $a = n - 1 = m - 1$. But then $b = m + n - 3 - a = m - 2 < a$, which contradicts the fact that $a \le b$.

For convenience, write $\term_+ = (x,y)$, and suppose the initial square is not as described.
We consider two cases. 

\begin{case} \label{Init=TermEPf=x=m-1}
Assume $x = m-1$. 
\end{case}
Note that $\term_+E = (0, n - 1 - y)$ is the inverse of~$\term_+$, so $\term_+$ cannot be the terminal square of~$\hampath$ (since $\term_+E$ is not the initial square of the inverse of~$\hampath$). 

Assume, for the moment, that $\term_+E$ is not in the terminal diagonal. Then, by assumption, $\term_+E$ travels north. So $\term_+$ cannot travel east. (Otherwise, the hamiltonian path~$\hampath$ would contain the cycle $[\term_+](E N^{2y+1})$.) Therefore, since $\term_+$ is not the terminal square of~$\hampath$, we conclude that $\term_+$ travels north. Since $\term_+E$ is not the initial square (and must therefore be entered from either $\term_+$ or $\term_+ EN^{-1}$, we conclude that $\term_+ EN^{-1}$ travels north. So $\hampath$ contains the cycle $[\term_+](N^{2n})$. This is a contradiction.

We may now assume that $\term_+E$ is in the terminal diagonal. 
However, $\term_+ E N^{-1}$ is also in the terminal diagonal (since it is obviously in the same diagonal as~$\term_+$, which is in the terminal diagonal). It follows that $\term_+ E N^{-1} \in S_a$ \refnote{Sa+1}
and $\term_+ E \in S_b$, with $b = a + 1$. 
Since
	$$b = x + y = (m - 1) + y \ge m-1 ,$$
this implies
	$$2m - 3 \ge m + n - 3 = a + b 
	= 2b - 1 \ge 2(m-1) - 1 = 2m - 3 .$$
Therefore, we must have equality throughout both strings of inequalities, so 
	$$ \text{$y = 0$, \ $m = n$, \ $b = m-1$, \ and \ $a = m-2$} .$$
(Since $m = n$, the desired contradiction can be obtained from \cite[Thm.~3.12]{Forbush}, but, for completeness, we provide a direct proof.)
Since $(m-1,0) = (m-1,y) = \term_+$ is not the terminal square, it must travel either north or east. We consider these two possibilities individually.

Assume, for the moment, that $(m-1,0)$ travels east (to $(0,m-1)$, because $m = n$). Clearly, $(0,m-1)$ does not travel north (because $\hampath$ does not contain the cycle $[(m-1,0)](EN)$). Also, $(0,m-1) = \term_+E$ is not the terminal square (because it is not the initial square of the transpose-inverse of~$\hampath$). So $(0,m-1)$ must travel east. Since $(0,m-1)N = (m-1,0)$ is not the initial square (because $(0,m-1) = \term_+E$ is not the initial square of the transpose of~$\hampath$), we conclude from \cref{term-travels} that $(m-2,0) = (0,m-1)NE^{-1}$ also travels east. And $(1,m-1)$ travels north, because it is not in the terminal diagonal. So $\hampath$ contains the cycle 
	$$ (0,m-1) \stackrel{E}{\to} (1,m-1) \stackrel{N}{\to} (m-2, 0) \stackrel{E}{\to} (m-1, 0) \stackrel{E}{\to}  (0, m-1)  .$$
This is a contradiction.

We may now assume that $(m-1,0)$ travels north. Since $(m-1,0)E = \term_+E$ is not the initial square, we conclude from \cref{term-travels} that $(0,m-2) = (m-1,0)EN^{-1}$ also travels north. By applying the same argument to the transpose of~$\hampath$, \refnote{ApplyToTranspose}
we see that $(0,m-1)$ and $(m-2,0)$ must travel east. Also, $(1,m-1)$ travels north, because it is not in the terminal diagonal. So $\hampath$ contains the cycle 
	$$ (m-1,0) 
	\stackrel{N^{2m-2}}{\longrightarrow} (0,m-2) 
	\stackrel{N}{\to} (0,m-1)
	\stackrel{E}{\to} (1,m-1)
	\stackrel{N}{\to} (m-2,0)
	\stackrel{E}{\to} (m-1,0) 
	.$$

This contradiction completes the proof of \cref{Init=TermEPf=x=m-1}.

\begin{case}
Assume $x < m-1$. 
\end{case}
Since $(x,y) = \term_+$ is the southeasternmost square in~$S_b$, we must have $y = 0$ (otherwise, $(x + 1, y - 1)$ is a square in~$S_b$ is farther southeast), so $b = x < m-1$. Since we know from the first sentence 
of the proof that $m - 2 \le b$, we conclude that $b = m - 2$ (and $a = m + n - 3 - b = n - 1$). Therefore
	$$ \text{$\term_+ = (m-2,0)$, \ so \ $\term_+E = (m-1,0)$} .$$
 
Note that $(0,n-1)$ cannot travel north (otherwise, $\hampath$ contains the cycle $(N^{2n})$, since $(0,n-1)$ is the only square of this cycle that is in the terminal diagonal). Also, $(0,n-1)$ is not the terminal square (since $(m-1,0) = \term_+E$ is not the initial square of the inverse of~$\hampath$). Therefore $(0,n-1)$ must travel east. Then, since $(0,n-1)N = (m-1,0) = \term_+E$ is not the initial square, we know that $(m-2,0) = \term_+$ also travels east. 

Since $\hampath$ cannot contain the cycle
	$$ (m-1,0) 
	\stackrel{N^{2n-1}}{\longrightarrow} (0,n-1)
	\stackrel{E}{\to} (1,n-1)
	\stackrel{N}{\to} (m-2,0)
	\stackrel{E}{\to} (m-1,0)
	, $$
we know that $(1,n-1)$ does not travel north. Therefore this square is in the terminal diagonal, which means  $b = 1 + (n-1) = n$, so we have $a + 1 = n = b = m-2$. Hence, we are in the exceptional case at the end of the statement of the \lcnamecref{Init=TermE}. Therefore, $(1,n-1)$ is not the terminal square of~$\hampath$ (since $(m-2,0) = \term_+$ is not the initial square of the inverse of~$\hampath$).
Since we have already seen that it does not travel north, we conclude that $(1,n-1)$ travels east. 

Applying the argument of the preceding paragraph 
to the inverse of~$\hampath$ tells us that $(1,n-1)$ travels east in~$\widetilde\hampath$. Taking the inverse, this means $(m-2,0)$ travels east in~$\hampath$. Also, we know that $(2,n-1)$ travels north (because it is not in the terminal diagonal), and we know that $(m-3,0)$ travels east (because $(m-3,0)EN^{-1} = (1,n-1)$ travels east and $(m-3,0)E = \term_+$ is not the initial square). Therefore, $\hampath$ contains the cycle 
	$$ (m-1,0) \stackrel{N^{2n-1}}{\longrightarrow} (0,n-1) \stackrel{E^2}{\to} (2,n-1) \stackrel{N}{\to} (m-3,0)  \stackrel{E^2}{\to} (m-1,0) .$$ 
This is a contradiction.
\end{proof}

The above \lcnamecref{Init=TermE} usually allows us to assume that the initial square of a hamiltonian cycle is~$\term_+ E$ (if all non-terminal diagonals travel north). The following result finds the possible terminal squares in this case.

\begin{prop} \label{UsualEndpt}
Let 
\noprelistbreak
	\begin{itemize}
	\item $m \ge n \ge 3$,
	\item $S_a \cup S_b$ be a diagonal in~$\Bmn$,
	\item $\term_+$ be the southeasternmost square in~$S_b$,
	and
	\item $\term$ be any square in $S_a \cup S_b$. 
	\end{itemize}
There is a hamiltonian path~$\hampath$ from $\term_+ E$ to~$\term$ in which all non-terminal diagonals travel north if and only if $\term$~is either $(\mm,a - \mm)$ or $(\mf, b - \mf)$ \textup(and $\term = (\mm,a - \mm)$ if $a = b$\textup), and $a \le m-2 \le b$.
\end{prop}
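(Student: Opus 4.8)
The plan is to reduce everything to the structure of the \emph{north subdigraph} and then chase cycles. First I would note that the bound $a \le m-2 \le b$ is necessary: it follows from \fullcref{OuterCan}{North} together with $n \le m$, exactly as in the first paragraph of the proof of \cref{Init=TermE}, so I assume it throughout. By \cref{uniqueness}, a hamiltonian path all of whose non-terminal diagonals travel north is determined by its initial and terminal squares, so the whole content of the statement is to decide, for the fixed initial square $\init = \term_+ E$, which squares $\term$ of $D := S_a \cup S_b$ can occur. The organizing picture is the spanning subdigraph of~$\Bmn$ in which every square travels north: it is a disjoint union of directed cycles, one cycle $[(p,0)](N^{2n})$ of length~$2n$ for each unordered pair of columns $\{p,\, m-1-p\}$, except that when $m$ is odd the middle column $\tfrac{m-1}{2}$ is a cycle of length~$n$ by itself. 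A hamiltonian path~$\hampath$ with all non-terminal diagonals travelling north is obtained from this union by redirecting the out-edge of each square of~$D$ that travels east (from its $N$-edge to its $E$-edge) and deleting the out-edge of~$\term$.

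For the ``only if'' direction I would first record the cyclic order on~$D$ given by \fullcref{circ-orient}{init}: starting from $\init E^{-1} = \term_+$ and applying $EN^{-1}$ repeatedly, one sweeps through $\term_+$, then all of~$S_a$ from its northwest corner $(0,a)$ to its southeast corner $(a,0)$, then all of~$S_b$ from its northwest corner back to~$\term_+$ (a short computation pins down these corners). By \fullcref{circ-orient}{init} the squares that travel east are exactly those strictly preceding~$\term$ in this order, so the ``east set'' is an initial arc of the order --- either a proper initial segment of~$S_a$, or all of~$S_a$ followed by an initial segment of~$S_b$. I would then chase cycles as in the proof of \cref{Init=TermE}: running~$\hampath$ forward from a square of the outer subdiagonal $S_{b+1}$ (using \cref{OuterEastOK}) and from suitable squares of~$S_a$ and~$S_b$, one finds that unless the east arc has exactly the length putting~$\term$ at position $\mm+1$ of the order (the $(\mm+1)$-st square of~$S_a$) or at the mirror position $2a+3-\mm$ (a specific square of~$S_b$), the redirected out-edges close up a short cycle, contradicting that $\hampath$ is a path. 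Converting these two positions to coordinates gives $\term=(\mm,a-\mm)$ and $\term=(\mf,b-\mf)$. When $a=b$ the subdiagonal $S_a=S_b$ is paired with itself, which kills the second possibility and leaves only $\term=(\mm,a-\mm)$.

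For the ``if'' direction I would run this in reverse: for each admissible target, form the digraph~$\hampath$ of the first paragraph whose east arc on~$D$ has the prescribed length, and check it is one hamiltonian path. Two verifications are needed. First, every north-cycle meets~$D$ (each such cycle is the union of two columns, hence sweeps a band of $2n$ consecutive subdiagonal indices, which must contain~$a$ or~$b$ since $a\le m-2\le b$), so no north-cycle survives as a subcycle. Second, contracting each maximal north arc of~$\hampath$ to a point turns~$\hampath$ into a graph whose edges are the redirected $E$-edges on~$D$; one checks this quotient is a single directed path from the arc through~$\init$ to the arc ending at~$\term$, i.e.\ the $E$-edges splice the north arcs together in order with no premature return. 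This last check is the routine-but-fussy part, and the small configurations ($m=n$; $a=b$; and the cases where $(\mm,a-\mm)$ or $(\mf,b-\mf)$ falls off the board, forcing ``no hamiltonian path'') must be inspected separately.

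The real obstacle I expect is the bookkeeping in the ``only if'' cycle-chase: whether a given subdiagonal is inner or outer, and where the wraps of the north-cycles land, depend on the relative sizes of~$a$ and~$n-1$ and of~$m$ and~$n$, so several configurations arise. Getting the threshold position in the cyclic order of~$D$ exactly right --- the appearance of $\lfloor(m-1)/2\rfloor$ --- is the crux; once that index arithmetic is in hand, the contradictions are of the same ``this forces a short cycle'' type already used for \cref{Init=TermE}.
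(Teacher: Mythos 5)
Your setup is sound and matches the paper's framework: the bound $a \le m-2 \le b$ comes from \fullcref{OuterCan}{North} exactly as you say, \cref{uniqueness} reduces everything to locating $\term$, and your index arithmetic is right ($\sigma_a = (\mm, a-\mm)$ sits at position $\mm+1$ in the cyclic order of \fullcref{circ-orient}{init}, and $\sigma_b = (\mf,b-\mf)$ at position $2a+3-\mm$). But the crux of necessity --- that every other position of $\term$ closes a short cycle --- is asserted rather than proved, and the tool you name for it (running forward from $S_{b+1}$ via \cref{OuterEastOK}) is not the one that does the work. The mechanism is the all-north cycle through the middle column(s), $[(\mf,0)](N^{n})$ for $m$ odd and $[(\mf,0)](N^{2n})$ for $m$ even: its intersection with $D$ is confined to columns $\mm$ and $\mf$, so some square there must fail to travel north; if $\term \notin \{\sigma_a,\sigma_b\}$ that square travels east, and \cref{circ-orient} then propagates ``travels east'' along $D$ to a second square whose redirected edge, together with the surviving north-arcs, forms an explicit forbidden cycle. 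This has to be done separately for $m$ odd and even, for $\sigma_a$ versus $\sigma_b$ travelling east, and for the degenerate cases $b-\mf = n-1$ and $a=b$ (your remark that $a=b$ ``kills the second possibility'' because $S_a$ is paired with itself does not explain anything when $m$ is even, where $\sigma_a \ne \sigma_b$ both lie in $S_a$; the actual reason is the cycle $[\sigma_a](E\,N^{n})$). Until those specific cycles are exhibited, the claim that only positions $\mm+1$ and $2a+3-\mm$ survive is unproven.

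In the sufficiency direction, your check (1) is justified by a false statement: the two columns $p$ and $m-1-p$ cover the subdiagonal indices $[p,\,p+n-1]\cup[m-1-p,\,m+n-2-p]$, two bands of length $n$ that are \emph{not} consecutive in general, and $a \le m-2 \le b$ alone does not force either band to meet $\{a,b\}$ (take $m=10$, $n=3$, $a=2$, $b=8$, $p=4$: the bands are $[4,6]$ and $[5,7]$). What rescues the conclusion is the additional hypothesis that $\term\in\{\sigma_a,\sigma_b\}$ is an actual square of the board, which forces $a \ge \mm-1$, and only then does every column pair meet $D$. Your check (2) --- that the redirected $E$-edges splice the north-arcs into a single path from $\init$ to $\term$ rather than several components --- is the entire content of the construction and is left unverified; the paper avoids this global argument by writing down explicit edge sequences $\hampath_a$ and $\hampath_b$ (different for $m$ odd and $m$ even) whose hamiltonicity can be checked directly. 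As it stands the proposal is a correct plan with the two decisive computations missing and one supporting claim wrong.
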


\begin{proof}
Let $\sigma_a = (\mm, a - \mm)$ and $\sigma_b = (\mf, b - \mf)$.

\smallskip

$({\Rightarrow})$
\fullCref{OuterCan}{North} tells us that $a \le m-2 \le b$. 
(See the first paragraph 
of the proof of \cref{Init=TermE}.) This establishes one conclusion of the \lcnamecref{UsualEndpt}.

We now wish to show that $\term$ is either $\sigma_a$ or~$\sigma_b$, and that $\term = \sigma_a$ if $a = b$. Assume the contrary.

Note that $\term_+$ must travel north in~$\hampath$, since $\term_+ E$ is the initial square (and $\Bmn$ does not have a hamiltonian cycle). \refnote{TauPlusMustNorth}

\setcounter{case}{0}

\begin{case}
Assume $m$~is odd. 
\end{case}
Note that $\mm = \mf$ in this case (and we have $m - 1 - \mf = \mf$).
Since $\hampath$ cannot contain the cycle $[(\mf,0)](N^n)$, we know that some square in this cycle does not travel north in~$\hampath$. This square must be in the terminal diagonal, so it is either $\sigma_a$ or~$\sigma_b$. It is therefore not the terminal square, so it must travel east.

From the preceding paragraph, we see that the square $\sigma_b$ must exist in~$\Bmn$, \refnote{sigmaBisSquare}
so $b - \mf \le n - 1$. Therefore 
	\begin{align*}
	 a - \mf + 1 
	 &= (m + n - 3) - b - \mf + 1 
	 \\&\ge (m + n - 3) - (\mf + n - 1) - \mf + 1 
	 \\&=  0 
	 . \end{align*}
Also, 
	$$ a - \mf + 1 \le b - \mf + 1\le (n-1) + 1 = n ,$$
so $a - \mf + 1 \le n - 1$ unless $a = b = \mf + n - 1$. But the alternative yields a contradiction:
	$$ m + n - 3 = a + b = 2(\mf + n - 1) = m + 2n - 3 > m + n - 3 .$$
Therefore, the square $(\mf - 1, a - \mf + 1)$ exists (and is in~$S_a$). 

Suppose $\sigma_b$ travels east. Since $\term_+ E$ is the initial square, we see from \cref{circ-orient} that $(\mf - 1, a - \mf + 1)$ travels east. \refnote{sigmaAGoesEast}
If $a < b$, this implies that $\hampath$ contains the cycle 
	$$ \sigma_b \stackrel{E N^{n - b + a - \mf + 1}}{\longrightarrow} (\mf-1, a - \mf + 1) \stackrel{EN^{b - a - 1}}{\longrightarrow} \sigma_b  .$$
On the other hand, if $a = b$, this implies that $\hampath$ contains the cycle
	$$ \sigma_b \stackrel{E N^{n + 1}}{\longrightarrow} (\mf-1, b - \mf + 1) \stackrel{E N^{n - 1}}{\longrightarrow} \sigma_b  .$$
In either case, we have a contradiction. 

We may now assume that $\sigma_b$ travels north. So it must be $\sigma_a$ that travels east (and $\sigma_a \neq \sigma_b$, so $a \neq b$). From \cref{circ-orient}, we see that \refnotelower{AEastBNorth}
	$$ \text{$(\mf-1, a - \mf + 1)$ travels east and $(\mf+1, b - \mf-1)$ travels north} .$$
So $\hampath$ contains the cycle 
	\begin{align*}
	 \sigma_a 
	&\stackrel{E}{\to} (\mf + 1, a - \mf)
	\stackrel{N^{b - a - 1}}{\longrightarrow} (\mf + 1, b - \mf - 1) 
	\\&\stackrel{N^{n - b + a +2}}{\longrightarrow} (\mf - 1, a - \mf + 1) 
	\stackrel{E}{\to} (\mf, a - \mf + 1) 
	\stackrel{N^{b - a - 1}}{\longrightarrow} \sigma_b
	\stackrel{N^{n - b + a}}{\longrightarrow} \sigma_a
	. \end{align*}
This is a contradiction.

\begin{case}
Assume $m$~is even. 
\end{case}
Note that $\mm = \mf - 1$ in this case (and we have $m - 1 - \mf = \mf - 1$).
Since $\hampath$ does not contain the cycle $[(\mf,0)](N^{2n})$, we know that some square in this cycle does not travel north. In other words, there is a square $(x,y)$ that does not travel north, such that $x \in \{\mf - 1, \mf\}$.

Assume, for the moment, that $b - \mf = n - 1$. Then $\sigma_b$ is the only square that is in the intersection of the terminal diagonal with the cycle $[(\mf,0)](N^{2n})$, \refnote{TermDiagAndCycle}
so it must be $\sigma_b$ that does not travel north. Since, by assumption, $\sigma_b$~is not the terminal square, we conclude that $\sigma_b$ travels east.
Then \cref{circ-orient} implies that $ (\mf - 2, 0) = \sigma_b N E^{-1}$ also travels east. So $\hampath$ contains the cycle 
	$$ \sigma_b 
	\stackrel{E}{\to} (\mf + 1, n - 1) 
	\stackrel{N}{\to} (\mf - 2, 0)  
	\stackrel{E}{\to} (\mf - 1, 0)  
	\stackrel{N^{2n-1}}{\longrightarrow} 
	\sigma_b 
	.$$
This is a contradiction.

We may now assume $b - \mf \le n - 2$, so 
	$$ a = (m + n - 3) - b \ge (m + n - 3) - (n - 2 + \mf) = m - 1 - \mf = \mm .$$
Therefore $S_a$ contains the square $(\mm, a - \mm) = \sigma_a$.
Note that $\sigma_a$ cannot travel north. (Otherwise, \cref{circ-orient} implies that $\sigma_b$ also travels north, \refnote{SigmaBAlsoNorth}
contrary to the fact that at least one of these two squares does not travel north.) Since, by assumption, $\sigma_a$ is not the terminal square, we conclude that $\sigma_a$ travels east. 

Since $\hampath$ does not contain the cycle $[\sigma_a](E, N^n)$, we conclude that $a \neq b$ and $\sigma_b$ does not travel north. Therefore $\sigma_b$ travels east. Then \cref{circ-orient} tells us that $(\mf - 1,b-\mf + 1)$ and every square in~$S_a$ all travel east. \refnote{LotsEast}
So $\hampath$ contains the cycle 
	\begin{align*}
	\sigma_b
	&\stackrel{E}{\to} (\mf + 1, b - \mf)
	\stackrel{N^{n-b + a + 2}}{\longrightarrow} (\mf-2,a - \mf + 2)
	\\& \stackrel{E}{\to} (\mf-1,a - \mf + 2)
	\stackrel{N^{b-a-1}}{\longrightarrow} (\mf - 1,b-\mf + 1)
	\\&\stackrel{E}{\to} (\mf,b-\mf + 1)
	\stackrel{N^{n-b + a}}{\longrightarrow} \sigma_a
	\stackrel{E}{\to} (\mf, a - \mf + 1)
	\stackrel{N^{b-a-1}}{\longrightarrow} \sigma_b 
	. \end{align*}
This is a contradiction.

\medbreak

$({\Leftarrow})$
We use $(\dots)^k$ to represent the concatenation of $k$~copies of the sequence $(\dots)$. (For example, $(N^3,E)^2 = (N,N,N,E,N,N,N,E)$.)

If $\sigma_a$ exists (that is, if $a \ge \mm$), then we have the following hamiltonian path $\hampath_a$ from $\term_+ E$ to~$\sigma_a$ \csee{HamaFig}:
	$$ [\term_+ E] \bigl( (N^{2n-1}, E)^{\mm}, N^{2n-1} \bigr) . $$

\begin{figure}[b]
\hrule\bigskip
\centerline{
\includegraphics{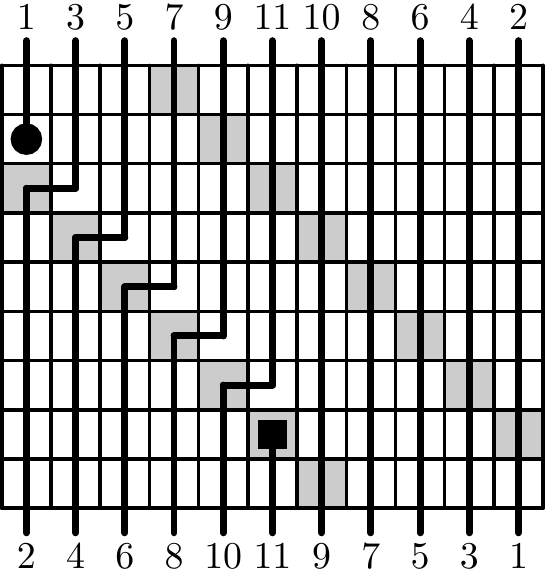}
\hfil
\vbox{\hbox{\includegraphics{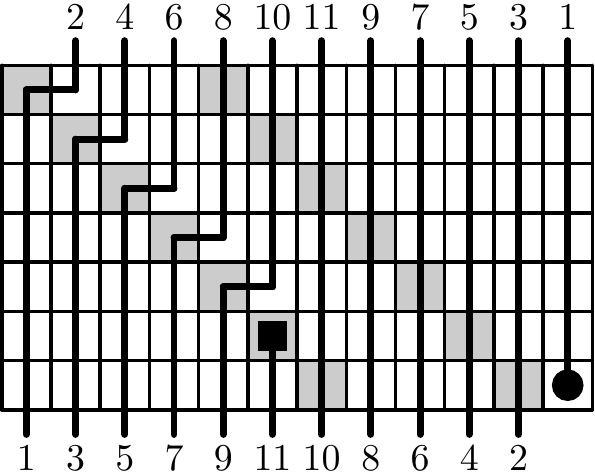}} \vskip 0.5cm}
}
\caption{Illustrative examples of the hamiltonian path~$\hampath_a$ from~$\term_+E$ (\!\lower2pt\hbox{\larger[4]$\bullet$}\!) to $\sigma_a$ (\vrule width 7pt height 6.5pt depth 0.5pt). (The terminal diagonal is shaded.)}
\label{HamaFig}
\end{figure}

Now assume $\sigma_b$ exists (that is, $b \le \mf + n - 1$) and $a \neq b$. 
	\begin{itemize}
	
	\item If $m$ is odd, then we have the following hamiltonian path $\hampath_b$ from $\term_+ E$ to~$\sigma_b$ \csee{HambOddFig}:
	\begin{align*}
	 [\term_+ E] \bigl( &(N^{2n-1}, E )^{b - n + 1}, 
		\\&(N^{b - a - 1}, E, N^{2i-1}, E, N^{n + a - b - 2i + 1}, E)_{i=1}^{\mf + n - b - 1}, 
		   \\&N^{b-a-1} \bigr)
	. \end{align*}

\begin{figure}[b]
\hrule\bigskip
\centerline{
\includegraphics{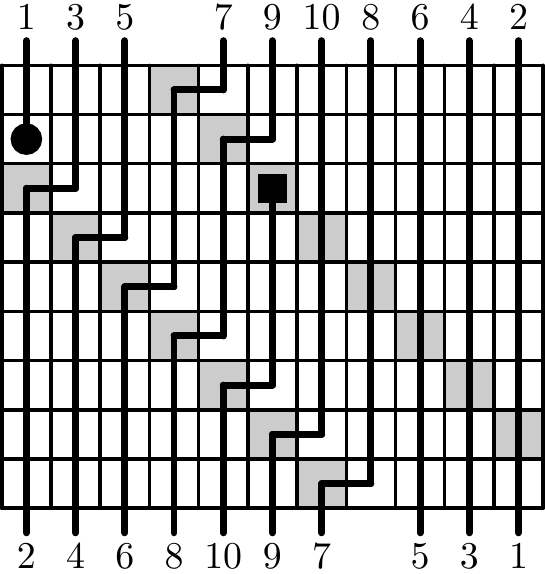}
\hfil
\vbox{\hbox{\includegraphics{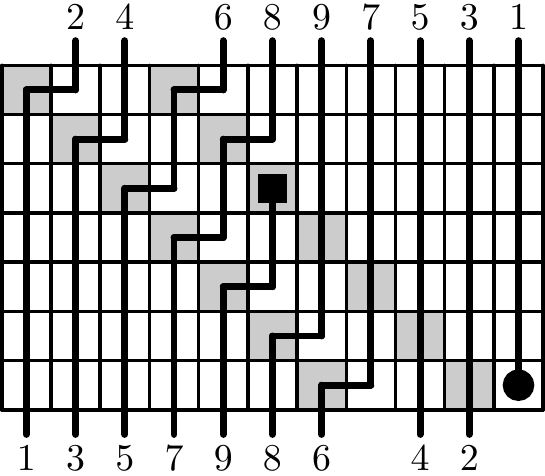}} \vskip 0.5cm}
}
\caption{Illustrative examples of the hamiltonian path~$\hampath_b$ from~$\term_+E$ (\!\lower2pt\hbox{\larger[4]$\bullet$}\!) to $\sigma_b$ (\vrule width 7pt height 6.5pt depth 0.5pt) when $m$ is odd. (The terminal diagonal is shaded.)}
\label{HambOddFig}
\end{figure}
	
	\item If $m$ is even, then we have the following hamiltonian path $\hampath_b$ from $\term_+ E$ to~$\sigma_b$ \csee{HambEvenFig}:
	\begin{align*}
	 [\term_+ E] \bigl( &(N^{2n-1}, E )^{b - n + 1}, 
		\\&(N^{b - a - 1}, E, N^{2i-1}, E, N^{n + a - b - 2i + 1}, E)_{i=1}^{\mf + n - b - 2}, 
		   \\& N^{b-a-1}, E, N^{n+a-b}, E, N^{b-a-1} \bigr)
	. 
	\qedhere \end{align*}

	\end{itemize}
\end{proof}

\begin{figure}[b]
\hrule\bigskip
\centerline{
\includegraphics{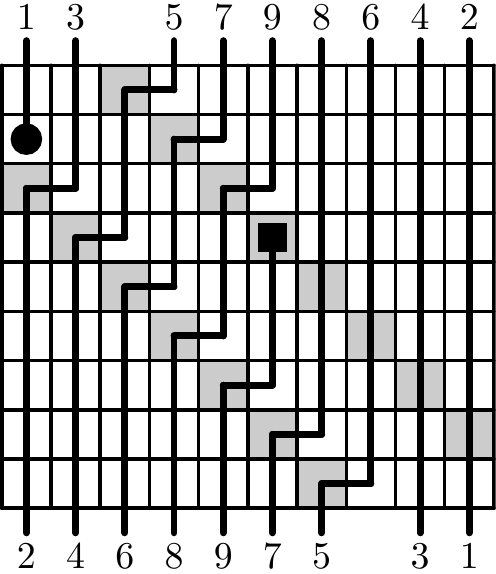}
\hfil
\vbox{\hbox{\includegraphics{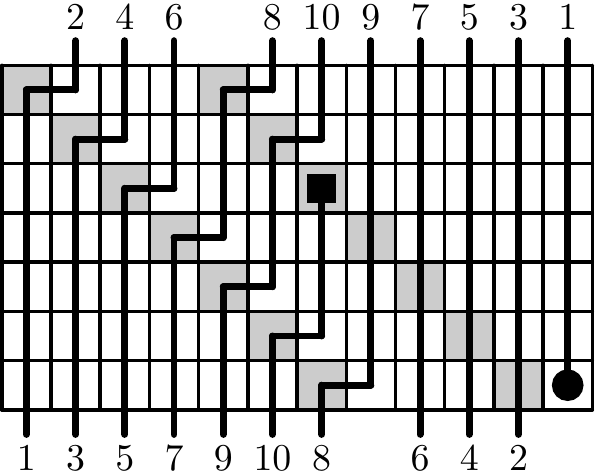}} \vskip 0.5cm}
}
\caption{Illustrative examples of the hamiltonian path~$\hampath_b$ from~$\term_+E$ (\!\lower2pt\hbox{\larger[4]$\bullet$}\!) to $\sigma_b$ (\vrule width 7pt height 6.5pt depth 0.5pt) when $m$ is even. (The terminal diagonal is shaded.)}
\label{HambEvenFig}
\end{figure}

We conclude this \lcnamecref{AllNorth} by finding the terminal square in the exceptional case that is at the end of the statement of \cref{Init=TermE}:

\begin{lem} \label{init=tauplus}
Let $\term$ be any square in $\board{m}{m-2}$, with $m \ge 5$. 
There is a hamiltonian path~$\hampath$ from $(m-2,0)$ to~$\term$ in which all non-terminal diagonals travel north if and only if $\term = (\mf - 1, \mm - 1)$.
\end{lem}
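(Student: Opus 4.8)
The plan is to follow the strategy of the proof of \cref{UsualEndpt}, the only change being that the initial square is now $\term_+$ itself rather than $\term_+ E$. By \cref{iotaE=tau}, the terminal diagonal of $\hampath$ is the diagonal containing $\init E^{-1} = (m-3,0) \in S_{m-3}$ and $\init N^{-1} = (1,m-3) \in S_{m-2}$, namely $S_{m-3}\cup S_{m-2}$. Hence, in the notation used from \cref{OuterCan} onward, $a = m-3$ and $b = m-2$, and since $n = m-2$ this gives $a+1 = b = n$: we are in the exceptional situation at the end of \cref{Init=TermE}, and $\init = (m-2,0)$ is precisely the square $\term_+$, which lies in $S_b$. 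From $\mf + \mm = m-1$ one sees that $(\mf-1,\mm-1) \in S_{m-3} = S_a$, so this square does lie in the terminal diagonal; in fact it equals $\sigma_a$ (in the notation of the proof of \cref{UsualEndpt}) when $m$ is even and equals $\sigma_a N E^{-1}$ when $m$ is odd --- exactly the one-step shift one expects when the initial square moves from $\term_+ E$ back to $\term_+$.

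$({\Rightarrow})$ By \cref{uniqueness}, together with the hypothesis that every non-terminal diagonal travels north, $\hampath$ is completely determined by its terminal square $\term$, so it suffices to rule out every $\term \in S_{m-3}\cup S_{m-2}$ other than $(\mf-1,\mm-1)$. The basic gadget is a forced cycle: since every non-terminal diagonal travels north, and columns $0$ and $m-1$ lie entirely in non-terminal diagonals apart from the single square $(0,m-3)$, the $2m-4$ squares of columns $0$ and $m-1$, together with the projective wrap-around identifications, are forced to form a $(2m-4)$-cycle of north-edges inside $\hampath$ unless $(0,m-3)$ travels east; since a length count forbids $(0,m-3) = \term$, we conclude that $(0,m-3)$ travels east. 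Feeding this into \cref{circ-orient}, and running the analogous argument on the next pair of boundary columns --- which also forces $\init$ itself to travel east, applying \cref{circ-orient} to $\init$ since, unlike in \cref{UsualEndpt}, $\init$ lies in the terminal diagonal --- forces a long opening segment of $\hampath$ and narrows $\term$ to a short list of candidates; each candidate other than $(\mf-1,\mm-1)$ is then eliminated because it produces another forced cycle. As in \cref{UsualEndpt}, the cases $m$ odd and $m$ even are treated separately, since $\mm = \mf$ in the former and $\mm = \mf-1$ in the latter.

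$({\Leftarrow})$ Conversely, when $\term = (\mf-1,\mm-1)$ the forcing in the previous paragraph assembles into an honest hamiltonian path; I would record this by writing the path's move-sequence explicitly --- one formula for $m$ odd, one for $m$ even --- in the style of the paths $\hampath_a$, $\hampath_b$ of \cref{UsualEndpt}. It begins $[(m-2,0)]\bigl(E, N^{2n-1}, \dots\bigr)$ and then zigzags up successive pairs of columns via the twist, and one checks directly from the exponents that it visits each of the $m(m-2)$ squares exactly once, ends at $(\mf-1,\mm-1)$, and uses only north-edges outside the terminal diagonal, so that every non-terminal diagonal travels north.

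The main obstacle is the $({\Rightarrow})$ bookkeeping: correctly tracing the forced opening segment through the wrap-arounds, keeping the $m$-odd and $m$-even cases straight, and, for each of the several wrong candidate terminal squares, exhibiting the short cycle that rules it out. This is the same style of argument that makes the proof of \cref{UsualEndpt} lengthy, and getting the wrap-around indices exactly right is where the care is needed. A lesser nuisance is writing the two explicit move-sequences in $({\Leftarrow})$ so that the path lands on precisely $(\mf-1,\mm-1)$ for both parities.
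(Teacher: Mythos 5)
Your setup is correct ($a=m-3$, $b=n=m-2$, the observation that $(\mf-1,\mm-1)\in S_a$, and the identification of this as the exceptional case of \cref{Init=TermE}), but the proposal defers exactly the steps that constitute the proof, and the route you sketch for $({\Rightarrow})$ has concrete defects. The claim that ``a length count forbids $(0,m-3)=\term$'' is unsupported: $(0,m-3)=(0,n-1)$ lies in $S_a$, which is precisely the subdiagonal that must contain the terminal square, so it is a legitimate candidate for~$\term$ and excluding it is part of the content of the lemma, not a preliminary. The first column-cycle only shows that $(0,n-1)$ does not travel north, i.e.\ it travels east \emph{or} is terminal; the paper then applies \cref{circ-orient} to conclude that $\term\in S_a$ and all of $S_b$ travels east, and pins down $\term$ only in the later case analysis. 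Your plan to iterate column-cycles in the all-north path also does not obviously close: the cycle through columns $p$ and $m-1-p$ meets the terminal diagonal in up to four squares (so ``some square fails to travel north'' does not say which), and the pair containing column $m-2$ yields nothing at all, since that cycle is already broken at the initial square $(m-2,0)$, whose invalence is~$0$ --- so it cannot ``force $\init$ to travel east'' as you assert (that fact instead follows from $\init\in S_b$ once all of $S_b$ is known to travel east). The decisive sentence ``each candidate other than $(\mf-1,\mm-1)$ is then eliminated because it produces another forced cycle'' asserts the conclusion rather than proving it, and the explicit move-sequences for $({\Leftarrow})$ are promised but never written.

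The paper's proof hinges on a move you do not make: since $b=a+1$ there are no inner diagonals, so by \fullcref{OuterCan}{East} one may replace $\hampath$ by the companion hamiltonian path $\hampath'$ with the same endpoints in which every non-terminal diagonal travels \emph{east}. A single east-cycle through row~$\nf$ then meets the terminal diagonal in at most four squares; combined with the fact that all of $S_b$ travels east and one application of \cref{circ-orient}, this forces the unique candidate $(\nf,\nf)$ (for $m$ odd) or $(\nf,\nm)$ (for $m$ even), each of which equals $(\mf-1,\mm-1)$, and one short forced cycle disposes of the alternative in each parity. To salvage your all-north, column-cycle route you would have to exhibit a forced cycle for every wrong candidate $\term$ in $S_{m-3}\cup S_{m-2}$, a much longer computation than the two cases the paper needs, and you would still owe the explicit constructions for $({\Leftarrow})$.
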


\begin{proof}
Let $n = m - 2$, $a = m - 3 = n - 1$ and $b = a + 1 = n$. 

$({\Rightarrow})$
Since the initial square is $(m-2,0) = (a,0)E$, we know from \cref{iotaE=tau} that $S_a$ is a terminal subdiagonal. Then the other part of the terminal diagonal is $S_{m + n - 3 - a} = S_b$.

Since $\hampath$ does not contain the cycle $[(m-1,0)](N^{2n})$, we know that $(0,n-1)$ does not travel north. From \cref{circ-orient}, this implies that the terminal square~$\term$ is somewhere in~$S_a$, and that every square in~$S_b$ travels east. 

There are no inner diagonals (since $b = a + 1$), so, by \cref{OuterEastOK}, we may let $\hampath'$ be the hamiltonian path from $(m-2,0)$ to~$\term$ in which all non-terminal diagonals travel east. 

\setcounter{case}{0}

\begin{case}
Assume $m$ and~$n$ are odd.
\end{case}
Since $\hampath'$ does not contain the cycle $[(0,\nf)](N^m)$, we know that $(\nf,\nf)$ does not travel east. We may also assume it is not the terminal square, for otherwise $\term = (\nf,\nf) = (\mf - 1, \mm - 1)$ (since $\nf = \nm$ and $n = m - 2$), as desired. So $(\nf,\nf)$ travels north. From \cref{circ-orient}, we conclude that $(\nf + 1,\nf - 1)$ also travels north. \refnote{AlsoNorth}
So $\hampath'$ contains the cycle
	$$ (\nf,\nf)
		\stackrel{N}{\to} (\nf, \nf + 1)
		\stackrel{E^{m+1}}{\to} (\nf + 1, \nf - 1)
		\stackrel{N}{\to} (\nf + 1, \nf)
		\stackrel{E^{m-1}}{\to} (\nf, \nf)
	. $$
This is a contradiction.

\begin{case}
Assume $m$ and~$n$ are even.
\end{case}
Since $\hampath'$ does not contain the cycle $[(0,\nf)](E^{2m})$, we know that $(\nf,\nm)$, $(\nf,\nf)$, $(\nm,\nf)$, and $(\nm, \nf+1)$ do not all travel east. From \cref{circ-orient}, we conclude that $(\nf,\nm)$ does not travel east. \refnote{CantEast} We may also assume it is not the terminal square, for otherwise $\term = (\nf,\nm) = (\mf - 1, \mm - 1)$, as desired. So $(\nf,\nm)$ travels north. Then $\hampath'$ contains the cycle
	$$ (\nf,\nm)
		\stackrel{N}{\to} (\nf, \nf)
		\stackrel{E^m}{\to} (\nf, \nm)
	. $$
This is a contradiction.

\medbreak

$({\Leftarrow})$ We have the following hamiltonian path from $(m-2,0)$ to~$(\mf - 1, \mm - 1)$ in~$\Bmn$ when $n = m-2$ \csee{Ham(m-2:0)Fig}:
	$$ \begin{cases}
	[(m-2,0)] \bigl( (E, N^{2n+1-2i}, E^2, N^{2i})_{i=1}^{\nf - 1}, E, N^n \bigr) & \text{if $m$ and~$n$ are odd}, \\
	[(m-2,0)] (E, N^{2n+1-2i}, E^2, N^{2i})_{i=1}^{\nf}\# & \text{if $m$ and~$n$ are even}
	\end{cases} $$
(where $\#$ indicates deletion of the last term of the sequence).
\end{proof}

\begin{figure}[b]
\hrule\bigskip
\centerline{
\includegraphics{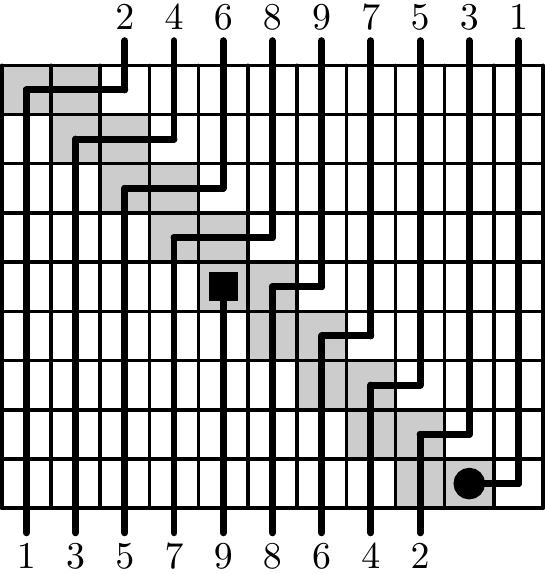}
\hfil
\vbox{\hbox{\includegraphics{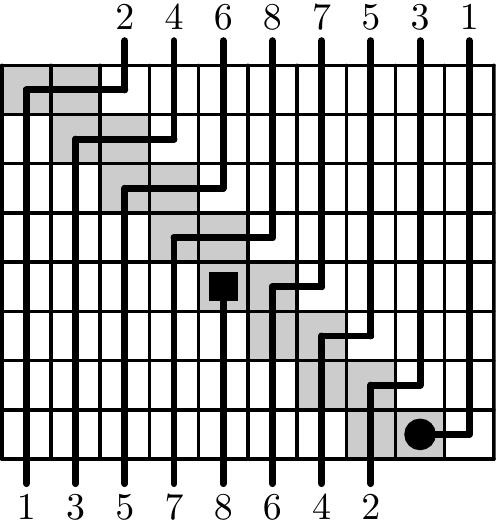}} \vskip 0.25cm}
}
\caption{Illustrative examples (with $m$ odd on the left, and $m$~even on the right) of the hamiltonian path from $(m-2,0)$ (\!\lower2pt\hbox{\larger[4]$\bullet$}\!) to~$(\mf - 1, \mm - 1)$ (\vrule width 7pt height 6.5pt depth 0.5pt) in $\board{m}{m-2}$, such that all non-terminal diagonals travel north. (The terminal diagonal is shaded.)}
\label{Ham(m-2:0)Fig}
\end{figure}

\section{Reduction to diagonals that travel north} \label{ReductionSect}

\begin{defn}
A diagonal $S_i \cup S_j$ of~$\Bmn$ with $i \le j$ is said to be \emph{rowful} if $n-1 \le i \le j \le m-2$. (In other words, $S_i \cup S_j$ is rowful if $S_i$ and~$S_j$ each contain a square from every row of the checkerboard.) The subdiagonals $S_i$ and $S_j$ of a rowful diagonal are also said to be \emph{rowful}.
\end{defn}

\Cref{IgnoreEast} below 
shows that if a rowful diagonal travels east, then it basically just stretches the checkerboard to make it wider \csee{RowfulEFig}. \Cref{stretch} uses this observation to show that finding a hamiltonian path between any two given squares of~$\Bmn$ reduces to the problem of finding a hamiltonian path in a smaller checkerboard, such that all non-terminal diagonals travel north. 

\begin{figure}[b]
\hrule\bigskip
\centerline{
\includegraphics{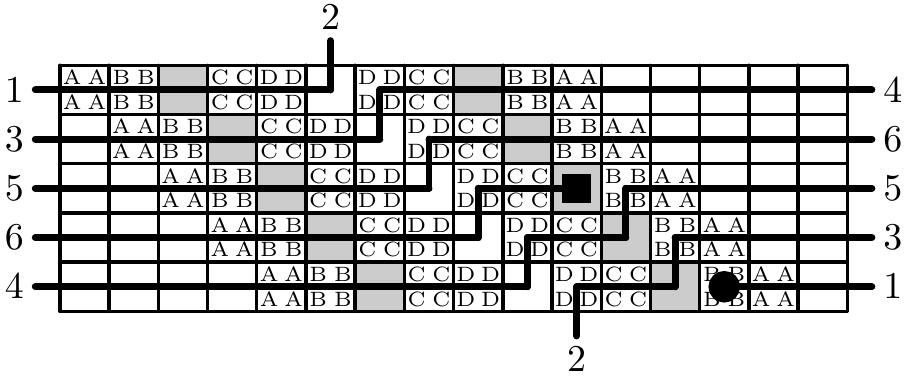}
}
\centerline{\raise0.7in\hbox to 0pt{\hss\Large$\leadsto$ \quad}
\includegraphics{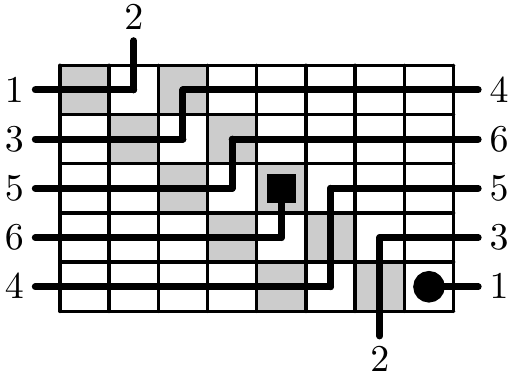}
}
\caption{The diagonals marked 
\lower3pt\vbox{\tiny\hbox{A\,A}\hbox{A\,A}},
\lower3pt\vbox{\tiny\hbox{B\,B}\hbox{B\,B}},
\lower3pt\vbox{\tiny\hbox{C\,C}\hbox{C\,C}},
\lower3pt\vbox{\tiny\hbox{D\,D}\hbox{D\,D}}
are rowful and travel east. Removing them yields a hamiltonian path in a smaller checkerboard. (As usual, the terminal diagonal is shaded.)}
\label{RowfulEFig}
\end{figure}

\begin{warn}
The subdiagonal $S_{m-1}$ is \emph{not} rowful, even though it contains a square from every row of the checkerboard (if $m \ge n$), because it is a constituent of the diagonal $S_{n-2} \cup S_{m-1}$, which is not rowful.
\end{warn}

\begin{notation}
For $i,j \in \NN$, define $\Delta_{i,j} \colon \NN \to \{0,1,2\}$ by
	$$ \Delta_{i,j}(k) = \bigl| \{i,j\} \cap \{0,1,2\ldots, k-1\} \bigr| .$$
Then, for each square $(p,q)$ of~$\Bmn$, we let 
	$$\Delta_{i,j}^\square(p,q) = \bigl( p - \Delta_{i,j}(p+q), q \bigr) .$$
\end{notation}

\begin{lem} \label{IgnoreEast}
Suppose
\noprelistbreak
	\begin{itemize}
	\item $\term_0$ and $\term$ are two squares of~$\Bmn$ that are in the same diagonal,
	and
	\item $S_i \cup S_j$ is a rowful diagonal of~$\Bmn$ that is not the diagonal containing $\term_0$ and~$\term$.
	\end{itemize}
Then there is a hamiltonian path~$\hampath$ from $\term_0 E$ to~$\term$ in~$\Bmn$, such that $S_i \cup S_j$ travels east, if and only if there is a hamiltonian path~$\hampath'$ from $\bigl( \Delta_{i,j}^\square(\term_0) \bigr) E$ to $\Delta_{i,j}^\square(\term)$ in $\board{m-\Delta_{i,j}(m+n)}{n}$.

More precisely, if $\sigma$ is any square of~$\Bmn$ that is not in $S_i \cup S_j$, then the square $\Delta_{i,j}^\square(\sigma)$ travels the same direction in~$\hampath'$ as the square $\sigma$ travels in~$\hampath$.
\end{lem}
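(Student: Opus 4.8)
The plan is to build, from the explicit map $\phi := \Delta_{i,j}^{\square}$, a direction-preserving bijection between the hamiltonian paths of the two boards; the ``more precisely'' clause is then precisely the statement that the bijection preserves directions.

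First I would settle the combinatorics of $\phi$ alone. Write $c := \Delta_{i,j}(m+n)$, so $c = |\{i,j\}| \in \{1,2\}$ and the target board is $\board{m-c}{n}$. Since $\Delta_{i,j}$ is nondecreasing and grows by at most $1$ at each step, two squares with equal $\phi$-image but distinct first coordinates would force one of them to lie in $S_i$ or~$S_j$; hence $\phi$ is injective on $\Bmn \setminus (S_i \cup S_j)$. Rowfulness gives $i,j \le m-2$, so $S_i$ and~$S_j$ each meet every row in exactly one square; thus $|S_i \cup S_j| = cn$, the domain and target of~$\phi$ both have size $(m-c)n$, and a short range check (again using $i,j \le m-2$) puts the image of~$\phi$ inside $\board{m-c}{n}$, so $\phi$ is a bijection onto it. I would also record that $\phi$ carries $S_k$ onto $S_{k-\Delta_{i,j}(k)}$ for each $k \notin \{i,j\}$, via the order-preserving bijection $\{0,\dots,m+n-2\}\setminus\{i,j\} \to \{0,\dots,m+n-2-c\}$, and hence carries the diagonals of~$\Bmn$ other than $S_i \cup S_j$ bijectively onto the diagonals of $\board{m-c}{n}$, taking the diagonal containing $\term_0$ and~$\term$ to the one containing $\phi(\term_0)$ and~$\phi(\term)$, which by \cref{iotaE=tau} is the terminal diagonal of every hamiltonian path of $\board{m-c}{n}$ that starts at $\phi(\term_0)E$.

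Now the geometric heart: since $i,j \le m-2$, no square of $S_i$ or~$S_j$ lies in column~$m-1$, so if $S_i \cup S_j$ travels east in a hamiltonian path~$\hampath$ of~$\Bmn$ then every square of $S_i$ (resp.~$S_j$) is carried due east, without a twist, into $S_{i+1}$ (resp.~$S_{j+1}$); thus each square of $S_i \cup S_j$ is a pure ``transit'' square that $\hampath$ threads straight through from west to east. I would define $\hampath'$ on $\board{m-c}{n}$ by the rule that, for each $\sigma \in \Bmn \setminus (S_i \cup S_j)$ with $\sigma \neq \term$, the square $\phi(\sigma)$ travels in~$\hampath'$ in the same direction $\sigma$ travels in~$\hampath$. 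The computational crux is the intertwining identity $\phi(\sigma)\cdot d = \phi\bigl(\rho(\sigma \cdot d)\bigr)$ for $\sigma \notin S_i \cup S_j$ and $d \in \{E,N\}$, where $\rho(x)=x$ off $S_i \cup S_j$ and otherwise $\rho(x)$ is the square reached from~$x$ by east-moves until one first leaves $S_i \cup S_j$; using $\Delta_{i,j}(i)=0$, $\Delta_{i,j}(i+1)=1$, $\Delta_{i,j}(j)=1$, $\Delta_{i,j}(j+1)=2$ (and their analogues when $i=j$), this reduces to a check of how the maps $E$, $N$ and their twists interact with the column-shift effected by~$\phi$, with the twisted entries into the squares $(i,0)$ and~$(j,0)$ handled on their own.

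Granting that identity, the rest is bookkeeping. Since $\phi$ is a bijection off $S_i \cup S_j$, the rule above assigns exactly one direction to every square of $\board{m-c}{n}$ except $\phi(\term)$, which is therefore the unique square of outvalence~$0$; applying the identity to the $\hampath$-predecessor of each square (traced back through any transit chain) shows that every other square has invalence~$1$ while $\phi(\term_0)E$ has invalence~$0$. Moreover $\hampath'$ has no directed cycle, for reinserting the transit chains would turn such a cycle into a closed directed walk of positive length inside the path~$\hampath$, which is impossible. With $(m-c)n$ vertices, $(m-c)n-1$ edges and these valence conditions, $\hampath'$ is a hamiltonian path from $\phi(\term_0)E$ to~$\phi(\term)$. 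For the converse I would run the construction backwards — given $\hampath'$, let $S_i \cup S_j$ travel east and match every other direction through~$\phi$ — and check, by the same identity and the same no-cycle argument (now lifting a cycle of~$\hampath$ down to one of~$\hampath'$), that the result is a hamiltonian path of~$\Bmn$ from $\term_0 E$ to~$\term$ with $S_i \cup S_j$ travelling east; the two constructions are visibly mutually inverse, so directions match in both directions. The step I expect to be the real obstacle is the intertwining identity: it is elementary but has to be verified over several cases — ordinary moves, moves through one of $S_i,S_j$, moves through both (which happens exactly when $j=i+1$), and the twisted cases of $E$ and~$N$ — while watching the degenerate sub-cases $i=j$ (where $\{i,j\}$ is a singleton and the board shrinks only to $\board{m-1}{n}$) and $i=m-2$ (which forces $m=n+1$ and $i=j=n-1$), for which a configuration or two may need direct verification.
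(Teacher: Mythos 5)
Your proposal is correct and is essentially the paper's own argument: both contract the east-travelling rowful diagonal $S_i \cup S_j$ out of the board (threading each transit square, including the chained case $j = i+1$ via your map $\rho$) and identify the resulting digraph with $\board{m-\Delta_{i,j}(m+n)}{n}$ via $\Delta_{i,j}^\square$. The paper simply asserts that this identification is an isomorphism of digraphs and that hamiltonian paths correspond, whereas you spell out the bijectivity, the intertwining identity, and the valence/acyclicity bookkeeping explicitly.
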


\begin{proof}
Assume, for the moment, that $j \neq i + 1$ (so $S_i E \cap S_j = \emptyset$).
Define a digraph~$\boardonly'$ from~$\Bmn$ by 
	\begin{enumerate}
	\item replacing each directed edge $\sigma \to \phi$, such that $\phi \in S_i \cup S_j$, with a directed edge from $\sigma$ to~$\phi E$,
	and
	\item \label{DeleteSquares}
	deleting all the squares in $S_i \cup S_j$ (and the incident edges).
	\end{enumerate}
It is clear that hamiltonian paths in~$\boardonly'$ correspond to hamiltonian paths in~$\Bmn$ such that $S_i \cup S_j$ travels east. Since the digraph~$\boardonly'$ is isomorphic to~$\board{m-\Delta_{i,j}(m+n)}{n}$ (via the map~$\Delta_{i,j}^\square$), the desired conclusion is immediate. \refnote{StretchInitialSquare}

If $j = i + 1$, then the definition of~$\boardonly'$ needs a slight modification: instead of considering only a directed edge $\sigma \to \phi$, one needs to allow for the possibility of a longer path. Namely, if there is a path $\sigma \to \phi \stackrel{E}{\to} \alpha$ with $\phi \in S_i$ and $\alpha \in S_j$, then, instead of inserting the edge $\sigma$ to~$\phi E$ (which cannot exist in~$\boardonly'$, because $\phi E = \alpha$ is one of the squares deleted in~\pref{DeleteSquares}), one inserts the edge $\sigma \to \alpha E$, because $\hampath$ must proceed from~$\sigma$ to~$\alpha E$ (via $\phi$ and~$\alpha$) if it travels from~$\sigma$ to~$\phi$.
\end{proof}

When $m = n$, it was proved in \cite[Prop.~3.3]{Forbush} that if some inner diagonal travels east in a hamiltonian path, then all inner diagonals must travel east. That is not always true when $m \neq n$, but we have the following weaker statement:

\begin{lem}[{}{cf.\ \cite[Prop.~3.3]{Forbush}}] \label{CanGoEast}
Let $\hampath$ be a hamiltonian path in $\Bmn$ with $m \ge n$. If $D$ is any inner diagonal that travels east in $\hampath$, then either $D$~is rowful, or all inner diagonals travel east.
\end{lem}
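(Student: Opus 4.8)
The plan is to run an induction on the size of the board, using two tools: \cref{IgnoreEast}, which lets one strip off a rowful diagonal that travels east at the cost of passing to a smaller checkerboard, and an explicit cycle that rules out an east‑travelling non‑rowful inner diagonal once no rowful diagonal travels east. Since swapping the two coordinates gives an isomorphism $\Bmn\cong\board{n}{m}$ that interchanges $E$ and $N$ (and interchanges "rowful'' with the — here vacuous — mirror notion), it is harmless to treat the case $m\ge n$; I would in fact prove the statement in this coordinate‑symmetric form so that the swap can be applied freely inside the induction.

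I would begin with the cheap reductions. If the terminal diagonal $S_a\cup S_b$ (with $a\le b$) is rowful then $a\ge n-1$, so every inner diagonal has smaller index $>a\ge n-1$ and is rowful too; hence there is no non‑rowful inner diagonal and, since $D$ is inner, $D$ must be rowful — nothing to prove. So I may assume $a\le n-2$; writing $D=S_i\cup S_j$ with $i\le j$, the hypotheses "inner'' and "not rowful'' become $a<i\le n-2$. Next, if some rowful diagonal $D'$ travels east, I apply \cref{IgnoreEast} to remove it: this produces a hamiltonian path in a strictly smaller projective checkerboard in which every non‑rowful subdiagonal keeps its travel direction, so in particular $D$ survives as an inner east‑travelling diagonal still at the small index $i\le n-2$. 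By the inductive hypothesis all inner diagonals of the smaller path travel east; these are exactly the inner diagonals of $\hampath$ other than $D'$, and $D'$ itself travels east, so all inner diagonals of $\hampath$ travel east. The point requiring care here is that removing $D'$ can leave a board narrower than it is tall, so that after re‑applying the coordinate swap the surviving $D$ occasionally becomes the central self‑paired diagonal of the new board (which the mirror of "rowful'' makes the inductive hypothesis powerless against); that residual situation is handled directly as in the final paragraph.

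With those reductions in place I may assume that no rowful diagonal travels east. If there are no rowful diagonals at all then $m=n$ and the conclusion is \cite[Prop.~3.3]{Forbush}; otherwise $m\ge n+1$, and every rowful diagonal now travels north (each is non‑terminal since the terminal diagonal is non‑rowful, so \cref{DiagMustTravel} applies). Let $k_0$ be the largest index with $k_0\le n-2$ for which $D_{k_0}=S_{k_0}\cup S_{m+n-3-k_0}$ travels east; this exists because $D=D_i$ qualifies, so $k_0\ge i\ge a+1$. By maximality every $D_{k_0+1},\dots,D_{n-2}$ travels north (each is inner, hence non‑terminal, hence uniform by \cref{DiagMustTravel}, hence — not being east — north), while $D_{n-1},D_n,\dots$, the rowful inner diagonals, travel north by the preceding sentence. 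Now I chase forced edges around the border: the east‑edge square $(m-1,\,n-2-k_0)$ of the long subdiagonal of $D_{k_0}$ travels east, landing at $(0,k_0+1)$; a run of $N$'s then climbs the leftmost column through $(0,k_0+1),(0,k_0+2),\dots,(0,n-1)$ — each of these squares lies in one of the diagonals just shown to travel north — wraps via the north twist to $(m-1,0)$, and a further run of $N$'s climbs the rightmost column through $(m-1,0),(m-1,1),\dots$ — again all in north‑travelling diagonals — arriving back at $(m-1,\,n-2-k_0)$. This exhibits the cycle
\[ \bigl[(m-1,\,n-2-k_0)\bigr]\bigl(E,\,N^{2n-3-2k_0}\bigr) \]
inside $\hampath$, contradicting that $\hampath$ is a path. (Checking that the listed squares are distinct and that every listed move is genuinely an edge of $\hampath$ is routine once one has identified which $D_s$ contains each $(0,s)$ and which $D_{n-2-t}$ contains each $(m-1,t)$.)

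The step I expect to be the real obstacle is the narrow residual case flagged above: a board in which $D$ has become the central self‑paired diagonal (there are then no rowful diagonals to strip). Here one argues symmetrically — now taking the \emph{smallest} index $l_0$ for which $D_{l_0},\dots,D_{\text{centre}}$ all travel east, so that $D_{l_0-1}$ travels north — and produces the mirror‑image cycle running along the bottom and top rows and wrapping through the east twist, namely $\bigl[(m-1-l_0,\,n-1)\bigr]\bigl(N,\,E^{2(m-1-l_0)+1}\bigr)$ (I am using here that a narrow board is "taller than wide'', so $S_{m-1}$ rather than $S_{n-1}$ is the subdiagonal that meets two edges). The bookkeeping that makes this fit — keeping track of which board and which orientation one is in after each application of \cref{IgnoreEast}, separating the sub‑cases according to where $k_0$ (resp. $l_0$) falls relative to the centre, and, at the transition involving the terminal diagonal, substituting \cref{term-travels,00NotInit} for the "next‑diagonal'' information used elsewhere — is where essentially all of the work lies.
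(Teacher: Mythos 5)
Your argument is correct and is essentially the paper's own proof: reduce via \cref{IgnoreEast} to the situation where no rowful diagonal travels east, take the east-travelling non-rowful inner diagonal whose small index $k_0$ is maximal, and derive a contradiction from the border cycle $[(m-1,\,n-2-k_0)](E,N^{2n-3-2k_0})$, which is exactly the paper's cycle $[\sigma](E,N^{2(j-m)+3})$ written in terms of $i=k_0$ rather than $j$. The one point worth flagging is that the ``narrow residual case'' you single out as the real obstacle is vacuous: a rowful diagonal exists only when $m\ge n+1$ and is self-paired when $m=n+1$, so stripping one always leaves $m'\ge n$ and $D$ can never become a central self-paired diagonal of the reduced board; your final paragraph can simply be deleted.
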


\begin{proof}
By repeated application of \cref{IgnoreEast}, we may assume that all rowful diagonals travel north. In this situation, we wish to show that if some inner diagonal $S_i \cup S_j$ travels east, then all inner diagonals travel east. Assume $j$~is minimal (or, equivalently, that $i$~is maximal), such that $S_i \cup S_j$ is an inner diagonal that travels east, and $i \le j$. This means $S_{i+1}, S_{i+2}, \ldots, S_{j-1}$ all travel north. 
From the first sentence 
of the proof, we know that  $S_i \cup S_j$ is not rowful, so $j \ge m - 1$. Therefore, we may let $\sigma = (m-1, j - m + 1)$. Since $\sigma \in S_j$ and the first coordinate of~$\sigma$ is $m-1$, we see that $\sigma E N^{-1} \in S_i$, so $\sigma E \in S_{i + 1}$. So $\hampath$ contains the cycle $[\sigma](E, N^{2(j - m) + 3})$. This is a contradiction.
\end{proof}

The following result essentially reduces the proof of \cref{InitSquares} to the special case considered in \cref{ReductionSect}, where all non-terminal diagonals travel north. (Although the diagonals travel east in conclusion \pref{stretch-HP-east}, passing to the transpose yields a hamiltonian path in which all non-terminal diagonals travel north, because the checkerboard~$\board{m'}{n}$ is square in this case.)

\begin{prop} \label{stretch}
Assume
\noprelistbreak
	\begin{itemize}
	\item $S_a \cup S_b$ is a diagonal of $\Bmn$, with $m \ge n$, $a \le b$, and $a + b \neq m + n - 2$,
	\item $(x,y)$ is a square in $S_a \cup S_b$,
	\item $(p,q)$ is a square of $\Bmn$ with $p + q - 1 \in \{a,b\}$,
	\item $o = max \bigl( a - n + 1 , 0 \bigr)$,
	\item  $e \in \NN$, 
	\item $e_1 = \begin{cases}  0 & \text{if $p + q - 1 = a$} , \\ e & \text{if $p + q - 1 = b$},  \end{cases}$
	\item $e_2 = \begin{cases} 0 & \text{if $x + y = a$} , \\ e & \text{if $x + y = b$}  , \end{cases}$
	\item $m' = m - 2o - e$.
	\end{itemize}
There is a hamiltonian path~$\hampath$ from $(p,q)$ to $(x,y)$ in $\Bmn$, such that exactly~$e$ rowful inner subdiagonals travel east\/, if and only if 
	\begin{enumerate}
	\item \label{stretch-bound}
	$0 \le e \le \max \bigl( \min(m - n , b - a - 1) , 0 \bigr)$,
	\item \label{stretch-even}
	$e$~is even if $m + n$ is even,
	and
	\item \label{stretch-HP}
	there is a hamiltonian path~$\hampath'$ from $(p - o - e_1, q)$ to $(x - o - e_2,y)$ in $\board{m'}{n}$, such that either
		\begin{enumerate}
		\item \label{stretch-HP-north}
		all non-terminal diagonals travel north in~$\hampath'$,
			and $m' \ge n$,
		or
		\item \label{stretch-HP-east}
		all non-terminal diagonals travel east in~$\hampath'$,
			and
			$m' = n \ge a + 3$.
		\end{enumerate}
	\end{enumerate}
\end{prop}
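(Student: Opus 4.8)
The plan is to prove both implications by peeling off, one rowful diagonal at a time, the rowful diagonals that travel east; \cref{IgnoreEast} supplies the peeling operation, and \cref{CanGoEast}, \cref{OuterCan}, and \cref{iotaE=tau} control which diagonals are forced to behave in which way. For the forward direction, suppose $\hampath$ goes from $(p,q)$ to $(x,y)$ with exactly $e$ rowful inner subdiagonals travelling east. Since $p+q-1\in\{a,b\}$ and $(x,y)\in S_a\cup S_b$, \cref{iotaE=tau} identifies $S_a\cup S_b$ as the terminal diagonal, and $a+b\neq m+n-2$ forces $a+b=m+n-3$ by \cref{diags(m+n-3)}. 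The rowful inner subdiagonals are exactly the $S_k$ with $\max(a+1,n-1)\le k\le\min(b-1,m-2)$; counting them (splitting on whether $a\ge n-1$, so the terminal diagonal is itself rowful, or $a<n-1$, so every rowful subdiagonal is inner) gives exactly $\max(\min(m-n,b-a-1),0)$ of them, which is~\pref{stretch-bound}. When $m+n$ is even these pair into inner diagonals $S_k\cup S_{m+n-3-k}$, each travelling uniformly by \cref{DiagMustTravel}, so an even number travel east, which is~\pref{stretch-even}; when $m+n$ is odd the central subdiagonal $S_{(m+n-3)/2}$ is its own diagonal and may travel east alone, permitting odd~$e$.

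Next I would replace $\hampath$ by the path $\hampath_E$ of \fullcref{OuterCan}{East}, which has the same endpoints and inner-diagonal directions (hence still exactly $e$ rowful inner subdiagonals east) but with every outer diagonal — in particular all $o$ outer rowful ones — travelling east. Now apply \cref{IgnoreEast} repeatedly to delete every east-travelling rowful diagonal: the $o$ outer rowful diagonals (width drop $2o$) and the $e$ rowful inner subdiagonals (width drop $e$, as $\lfloor e/2\rfloor$ diagonal deletions plus the central subdiagonal when $e$ is odd), landing in $\board{m'}{n}$ with $m'=m-2o-e$. The composite of the maps $\Delta_{i,j}^\square$ carries the endpoints to $(p-o-e_1,q)$ and $(x-o-e_2,y)$ — the summand $e$ appearing precisely when the endpoint sits just beyond $S_b$, which is how $e_1,e_2$ are defined — and by the ``more precisely'' clause of \cref{IgnoreEast} it preserves the travel direction of every surviving square. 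Call the result $\hampath'$. One checks that deletion moves the terminal subdiagonal to index $\min(a,n-1)$, so $S_{n-1}\cup S_{m'-2}$ is either the terminal diagonal of $\hampath'$ or an inner one, hence never outer; therefore \fullcref{OuterCan}{North} lets us further replace $\hampath'$ by $\hampath'_N$, making all outer diagonals of $\board{m'}{n}$ travel north. If $m'>n$, then $\board{m'}{n}$ has a surviving rowful inner subdiagonal, which travels north, so \cref{CanGoEast} forbids any inner diagonal of $\hampath'$ from travelling east; this is case~\pref{stretch-HP-north}, with $m'\ge n$ guaranteed by~\pref{stretch-bound} (indeed $m'\ge n+2$ if $a\ge n-1$). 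If instead $m'=n$, then $\board{n}{n}$ is square with no rowful subdiagonals, so \cite[Prop.~3.3]{Forbush} forces the surviving inner diagonals to be all north — giving~\pref{stretch-HP-north} after passing to $\hampath'_N$ — or all east, in which case $\hampath_E$ already made the outer diagonals east, giving~\pref{stretch-HP-east}; the inequality $m'=n\ge a+3$ holds there because ``all inner east'' requires a non-rowful inner diagonal of $\Bmn$ to exist, forcing $a\le n-3$.

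For the converse, given~\pref{stretch-bound}, \pref{stretch-even}, and $\hampath'$ as in~\pref{stretch-HP}, I would run this in reverse: apply the ``if'' half of \cref{IgnoreEast} repeatedly to insert $o$ rowful outer diagonals and $e$ rowful inner subdiagonals, all travelling east — the size and parity bounds on $e$ being exactly what makes these insertions legitimate — producing a hamiltonian path $\hampath$ of $\Bmn$ whose endpoints are carried back to $(p,q)$ and $(x,y)$ by the inverses of the $\Delta_{i,j}^\square$. The $e$ inserted rowful inner subdiagonals travel east by construction; no other rowful inner subdiagonal of $\hampath$ does, since the inserted outer ones are not inner and every non-terminal diagonal of $\hampath'$ keeps its direction under the insertions — north throughout in case~\pref{stretch-HP-north}, while in case~\pref{stretch-HP-east} the square board contributes no rowful subdiagonals at all. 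Thus $\hampath$ has exactly $e$ rowful inner subdiagonals travelling east, as required.

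The main obstacle is the bookkeeping around the deletions: verifying that the set of rowful subdiagonals, the inner/outer/terminal trichotomy, and the count $\max(\min(m-n,b-a-1),0)$ all transform correctly under each $\Delta_{i,j}^\square$; keeping the parity of $e$ synchronised with the pairing of rowful inner subdiagonals into diagonals; and pinning down the borderline $m'=n$, where one must confirm that it is case~\pref{stretch-HP-east}, not~\pref{stretch-HP-north}, that can occur and that the side condition $m'=n\ge a+3$ is then automatic.
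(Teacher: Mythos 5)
Your proposal is correct and follows essentially the same route as the paper's proof: normalize the outer diagonals with \fullcref{OuterCan}{East}, peel off the east-travelling rowful diagonals by repeated application of \cref{IgnoreEast}, restore northbound outer diagonals via \fullcref{OuterCan}{North} after checking that $S_{n-1}\cup S_{m'-2}$ is not outer, and use \cref{CanGoEast} (or its square-board ancestor \cite[Prop.~3.3]{Forbush}) to force the dichotomy between conclusions \pref{stretch-HP-north} and \pref{stretch-HP-east}. The only blemishes are harmless asides (e.g., the parenthetical claim that $m'\ge n+2$ when $a\ge n-1$ fails for $a=b\ge n$, where $m'=n+1$; and your ``surviving rowful inner subdiagonal'' need not exist when $a'=n-1$ and $b'=a'+1$, though the conclusion is then vacuous), and, like the paper, you leave the reverse direction and the index bookkeeping under repeated deletions as routine.
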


\begin{proof}
We prove only ($\Rightarrow$), but the argument can be reversed. \refnote{StretchReverse}

Note, first, that $\max \bigl( \min(m - n , b - a - 1) , 0 \bigr)$ is the number of rowful inner subdiagonals, so \pref{stretch-bound} is obvious. In addition, if $m + n$ is even, then every diagonal is the union of two distinct subdiagonals (which means that the subdiagonals counted by~$e$ come in pairs), so $e$~must be even. This establishes \pref{stretch-even}.

By \fullcref{OuterCan}{East}, we may assume that all outer diagonals travel east. The definition of~$o$ implies that it is the number of rowful outer diagonals. Furthermore, for any outer diagonal $S_i \cup S_j$, we have $i < a \le b < j$ (and, by assumption, we have $p + q \in \{a + 1, b + 1\}$ and $x + y \in \{a,b\}$). Therefore
	$$ \Delta_{i,j}(p,q) = \Delta_{i,j}(x,y) = |\{i\}| = 1 .$$ 
Therefore, by repeated application of \cref{IgnoreEast}, we conclude that there is a hamiltonian path~$\hampath'$ from $(p - o,q)$ to $(x - o,y)$ in $\board{m - 2o}{n}$, such that exactly $e$ rowful inner subdiagonals travel east in~$\hampath'$.

The definitions of $e_1$ and~$e_2$ imply that 
	$$ \text{$e_1 = \sum \Delta_{i,j}(p,q)$ \quad and \quad $e_2 = \sum \Delta_{i,j}(x,y)$} ,$$
where the sums are over all rowful inner diagonals $S_i \cup S_j$ that travel east. Therefore, repeated application of \cref{IgnoreEast} to~$\hampath'$ yields a hamiltonian path $\hampath''$ from $(p - o - e_1, q)$ to $(x - o - e_2,y)$ in $\board{m - 2o - e}{n}$, such that no rowful inner diagonals travel east.
\noprelistbreak
\begin{itemize}
	\item If $a \le n-1$, then $o = 0$ and $e \le m - n$, so 
		$m' \ge  m - 2(0) - (m-n) = n$.
	\item If $a \ge n$, then $o = a - n + 1$ and $e \le \max(b - a - 1,0) \le b - a$, so
		\begin{align*}
		 m' &= m - 2o - e \ge m - 2(a - n + 1) - (b - a) 
			\\&= m  + 2n - 2 - (a + b)
			= n + 1 
			> n
		. \end{align*}
	\end{itemize}
In either case, we have $m' \ge n$.

The terminal diagonal of~$\hampath''$ is $S_{a'} \cup S_{b'}$, where
	$$  \begin{cases}
		a' = a - o & \text{if $x + y = a$} , \\
		b' = b - o - e & \text{if $x + y = b$}
		. \end{cases} $$
By definition, we have $o \ge a - n + 1$. Therefore:
	\begin{itemize}
	\item If $x + y = a$, then
	$$ a'
	= a - o
	\le a - (a - n + 1)
	= n - 1 ,$$
so $(0,n-1)$ is not in an outer diagonal of~$\hampath'$.
	\item If $x + y = b$, then
	\begin{align*}
	m' - 2
	&= m - 2o - e - 2
	= (m - o - e) - o - 2
	\\&\le  (m - o - e) - (a - n + 1) - 2
	= b'
	, \end{align*}
so $(m'-2,0)$ is not in an outer diagonal of~$\hampath'$.
	\end{itemize}
In either case, \fullcref{OuterCan}{North} tells us that changing all of the outer diagonals of~$\hampath''$ to travel north yields a hamiltonian path $\hampath'''$ with the same endpoints. If no inner diagonal travels east in~$\hampath''$, then all non-terminal diagonals travel north in~$\hampath'''$, so $\hampath'''$ is a hamiltonian path as described in conclusion~\pref{stretch-HP-north}.

We may now assume some inner diagonal $S_i \cup S_j$ travels east in~$\hampath''$. From the definition of~$\hampath''$, we know $S_i \cup S_j$ is not rowful. So \cref{CanGoEast} tells us that all inner diagonals travel east.
Since we have already assumed (near the start of the proof) that all outer diagonals travel east, this implies that all non-terminal diagonals travel east. 

Note that there are no rowful non-terminal diagonals in~$\board{m'}{n}$. (All inner diagonals travel east, but, by the definition of~$\hampath''$, no rowful diagonal travels east.)
Since, by the assumption of the preceding paragraph, 
 some inner diagonal travels east, this implies there must be at least one inner diagonal that is not rowful.
 So $a \le n-3$.

All that remains is to show that $m' = n$. Suppose not, which means $n < m'$. So $S_{n-1} \cup S_{m'-2}$ is a rowful diagonal, and $n-1 \le m' - 2$. This is not the terminal diagonal $S_a \cup S_b$, because $a \le n-3$. This contradicts the fact that there are no rowful non-terminal diagonals.
\end{proof}

\section{The general case} \label{GeneralSect}

In this \lcnamecref{GeneralSect}, we utilize \cref{stretch} and the results of \cref{AllNorth} to determine which pairs of squares in~$\Bmn$ are joined by a hamiltonian path~$\hampath$, and use this information to establish \cref{InitSquares,TermSquares}. First, \cref{InitIsTau+E} sharply restricts the possibilities for the initial square (perhaps after replacing $\hampath$ with its inverse). Then \cref{TermFor0q,TermForp0Small,TermForp0Large} determine the terminal squares of the hamiltonian paths (if any) that start at each of these potential initial squares. \Cref{InitSquares,TermSquares} are straightforward consequences of these much more detailed results.

\Cref{stretch} will be employed several times in this \lcnamecref{GeneralSect}. To facilitate this, we fix the following notation:

\begin{notation}
Given a hamiltonian path~$\hampath$ in~$\Bmn$ (with $m \ge n \ge 3$), we let:
	\begin{itemize}
	\item $S_a \cup S_b$ be the terminal diagonal of~$\hampath$, with $a \le b$,
	\item $(x,y)$ be the terminal square of~$\hampath$ (so $(x,y) \in S_a \cup S_b$),
	\item $(p,q)$ be the initial square of~$\hampath$ (so $p + q - 1 \in \{a,b\}$ by \cref{iotaE=tau}),
	\item $o = \max \bigl( a - n + 1 , 0 \bigr)$,
	\item $e$ be the number of rowful inner subdiagonals that travel east in~$\hampath$,
	\item $e_1 = \begin{cases}  0 & \text{if $p + q - 1 = a$} , \\ e & \text{if $p + q - 1 = b$},  \end{cases}$
	\item $e_2 = \begin{cases} 0 & \text{if $x + y = a$} , \\ e & \text{if $x + y = b$}  , \end{cases}$
	\item $m' = m - 2o - e$.
	\item $p' = p - o - e_1$, 
	\item $x' = x - o - e_2$,
	\item $\hampath'$ be the hamiltonian path from $(p', q)$ to $(x',y)$ in~$\board{m'}{n}$ that is provided by \cref{stretch},
	\item $S_{a'} \cup S_{b'}$ be the terminal diagonal of~$\hampath'$, with $a' \le b'$, so 
		$$ \text{$a' = a - o$ \ and \ $b' = b - o - e$} ,$$ 
	\item $\term_+$ be the southeasternmost square of~$S_b$ in~$\Bmn$,
	\item $\term_+'$ be the southeasternmost square of~$S_{b'}$ in~$\board{m'}{n}$.
	\end{itemize}
\end{notation}

\begin{prop} \label{InitIsTau+E}
Assume $\hampath$ is a hamiltonian path in~$\Bmn$ with $m \ge n \ge 3$.
Then the initial square of either $\hampath$ or the inverse of~$\hampath$ is $\term_+E$, unless all inner diagonals travel east, in which case, the initial square \textup(of either~$\hampath$ or~$\widetilde\hampath$\textup) might also be of the form $(p,0)$, with $1 \le p \le \lfloor (m + n)/2 \rfloor - 1$.
\end{prop}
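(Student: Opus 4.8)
The plan is to use \cref{stretch} to reduce to a hamiltonian path all of whose non-terminal diagonals travel north, apply \cref{Init=TermE} (and, in one degenerate configuration, \cref{init=tauplus}) to that path, and then ``un-stretch'' to recover the initial square of~$\hampath$. Since $(0,0)$ is not an initial square (\cref{00NotInit}), the terminal diagonal is not $D_{m+n-2}$, so \cref{iotaE=tau} and \cref{stretch} apply with the data fixed in the notation above. \Cref{stretch} thus produces the hamiltonian path~$\hampath'$ in~$\board{m'}{n}$, and conclusion~\pref{stretch-HP} puts us in one of two cases: \textup{(a)} all non-terminal diagonals of~$\hampath'$ travel north and $m'\ge n$, or \textup{(b)} all non-terminal diagonals of~$\hampath'$ travel east and $m'=n\ge a+3$. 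In case~\textup{(b)} every inner diagonal of~$\hampath$ travels east (the rowful ones were deleted and so travelled east; the non-rowful ones survive unchanged into~$\hampath'$, where they travel east), so we are in the exceptional clause of the statement being proved; moreover, since $\board{m'}{n}=\board{n}{n}$ is square, the transpose~$(\hampath')^*$ is a hamiltonian path in~$\board{n}{n}$ all of whose non-terminal diagonals travel north. Hence in either case we may analyse an all-north hamiltonian path.

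I would then apply \cref{Init=TermE} to that all-north path. After possibly replacing~$\hampath$ by~$\widetilde\hampath$ --- and, in the small square sub-cases $m'=n=a'+2=b'+1$, passing to a transpose --- it says the initial square of~$\hampath'$ is $\term_+'E$, with $a'\le m'-2\le b'$, with two exceptions: in the degenerate configuration $a'+1=b'=n=m'-2$ the initial square might instead be $\term_+'=(m'-2,0)$ (the other endpoint then being pinned down by \cref{init=tauplus}), and in case~\textup{(b)}, where $m'=n$ (so neither exceptional configuration of \cref{Init=TermE} can occur, and $o=0$, $a'=a$, $b'\ge n$), unwinding the transpose shows the initial square of~$\hampath'$ is instead $(a'+1,0)=(a+1,0)$. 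So, up to replacing~$\hampath$ by~$\widetilde\hampath$, the initial square of~$\hampath'$ is one of $\term_+'E$, $(m'-2,0)$, or $(a+1,0)$.

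The remaining work is to un-stretch. The notation records that the initial square of~$\hampath$ is $(p'+o+e_1,\,q)$ when that of~$\hampath'$ is $(p',q)$, so it remains to evaluate this in the three cases. Since the larger terminal subdiagonal $S_{b'}$ of~$\hampath'$ corresponds, under the iterated maps~$\Delta^\square_{i,j}$ of \cref{IgnoreEast}, to the larger terminal subdiagonal $S_b$ of~$\hampath$, and the subdiagonals deleted by the stretching are only the $2o$~rowful outer ones flanking $S_a\cup S_b$ and the $e$~rowful inner ones strictly between $S_a$ and~$S_b$, a direct computation (watching the east-edge wrap-around) shows $\term_+'E$ pulls back to $\term_+E$; so outside the two exceptions the initial square of~$\hampath$ is $\term_+E$, as asserted. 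In case~\textup{(b)}, $(a+1,0)$ is a square of~$\Bmn$ with $1\le a+1\le n-2<\lfloor(m+n)/2\rfloor-1$ (using $a\le n-3$), and all inner diagonals travel east. In the degenerate configuration, $(m'-2,0)$ pulls back to $(p,0)$ with $p=n+o+e$; applying \cref{iotaE=tau} to~$\hampath$ with this initial square forces $b=a+1$, hence $e=0$, hence $m=n+2+2o$ and $p=n+o=\lfloor(m+n)/2\rfloor-1$, and $\hampath$ then has no inner diagonals (so the exceptional clause holds vacuously). In every case $1\le p\le\lfloor(m+n)/2\rfloor-1$, which completes the argument.

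The step I expect to be the main obstacle is the un-stretching bookkeeping: composing the inverse and transpose symmetries correctly with the stretching maps --- in particular in case~\textup{(b)}, where the transpose is taken of the already-stretched path~$\hampath'$ rather than of~$\hampath$ --- and verifying that $\term_+'E$, $(a+1,0)$, and $(m'-2,0)$ pull back, respectively, to $\term_+E$, $(a+1,0)$, and $(p,0)$ through the composite of the maps~$\Delta^\square_{i,j}$, including the cases where the relevant southeasternmost squares lie on the east edge of the board.
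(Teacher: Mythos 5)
Your overall strategy is the paper's: stretch via \cref{stretch}, apply \cref{Init=TermE} to the all-north path~$\hampath'$ (transposing first in the all-east case), and pull the initial square back to~$\Bmn$. The generic subcase and case~(b) are handled as in the paper. However, the other two subcases of \cref{Init=TermE} are mishandled. When $m'=n=a'+2=b'+1$ and it is the \emph{transpose} of~$\hampath'$ whose initial square is $\term_+'E$, you file this outcome under the bucket ``$\term_+'E$ pulls back to $\term_+E$.'' That is wrong: the transpose lives only in $\board{n}{n}$ and cannot be pushed back to~$\Bmn$ (which need not be square), so you must un-transpose before pulling back, exactly as you do in case~(b). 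The initial square of~$\hampath'$ itself is $(\term_+'E)^*=(n-1,0)$, which pulls back to $(n-1+o+e_1,0)$ --- a square of the form $(p,0)$, not $\term_+E=(0,n-1)$. This subcase is one of the genuine sources of the exceptional clause (here $b'=a'+1$, so $\hampath'$ has no inner diagonals and hence every inner diagonal of~$\hampath$ travels east), and your argument as written would instead assert, falsely, that the initial square is $\term_+E$.

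In the degenerate configuration $a'+1=b'=n=m'-2$ you pull $(n,0)$ back to $(n+o+e,0)$ and deduce $e=0$. The correct shift is $e_1$, not~$e$: since $p'+q-1=n-1=a'$, the definition of~$e_1$ gives $e_1=0$, so the initial square of~$\hampath$ is $(n+o,0)$ with no constraint on~$e$. Indeed $e$ can be positive here (then $b=a+1+e$), in which case $\hampath$ \emph{does} have inner diagonals --- namely the rowful east-travelling subdiagonals deleted by the stretch --- so your claim that the exceptional clause ``holds vacuously'' fails, and with it your verification that all inner diagonals travel east. The correct argument is the same one needed in the transpose subcase: $b'=a'+1$ means $\hampath'$ has no inner diagonals, so every inner diagonal of~$\hampath$ was deleted by the stretch and therefore travels east. (Your equality $p=\lfloor(m+n)/2\rfloor-1$ also fails for $e\ge 2$, although the inequality $p\le\lfloor(m+n)/2\rfloor-1$, which is all that is needed, survives.)
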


\begin{proof}
We consider the two possibilities presented in \fullcref{stretch}{HP} as separate cases.

\setcounter{case}{0}

\begin{case}
Assume all non-terminal diagonals travel north in~$\hampath'$.
\end{case}
From the conclusion of \cref{Init=TermE}, we see that there are three possibilities to consider (perhaps after replacing $\hampath$ with its inverse, which also replaces $\hampath'$ with its inverse).

\begin{subcase}
Assume the initial square of $\hampath'$ is $\term_+' E$.
\end{subcase}
Then the initial square of $\hampath$ is $\term_+ E$.

\begin{subcase} \label{InitIsTau+EPf-transpose}
Assume $m' = n = a' + 2 = b' + 1$ and the initial square of the transpose of~$\hampath'$ is $\term_+' E$. 
\end{subcase}
Since $\term_+' = (b',0) = (m'-1,0)$, we have $\term_+'E = (0,n-1)$, so the initial square of~$\hampath'$ is the transpose of this, namely $(n-1,0)$. This means $p' = n-1$ and $q = 0$, so the initial square of~$\hampath$ is 
	$$ (p,q) = (p' + o + e_1, q) = (n - 1 + o + e_1, 0) ,$$
which is obviously of the form $(p,0)$. (Also, since $a' \neq b'$, we have $a \neq b$, so $a < (m + n - 3)/2$, which means $a \le \lfloor (m + n - 4)/2 \rfloor = \lfloor (m + n)/2 \rfloor - 2$. Therefore $p = p + q = a + 1 \le \lfloor (m + n)/2 \rfloor -  1$.) Furthermore, since $a' + 2 = b' + 1$, we have $a' + 1 = b'$, so $\hampath'$ has no inner diagonals. This implies that every inner diagonal of~$\hampath$ travels east.

\begin{subcase} \label{InitIsTau+EPf-n0}
Assume $a' + 1 = b' = n = m'-2$ and the initial square of $\hampath'$ is $(n,0)$.
\end{subcase}
Then $e_1 = 0$ (since $p' + q - 1 = n - 0 - 1 = a'$) and the initial square of~$\hampath$ is $( n + o, 0)$, which is of the form $(p,0)$.   (Also, since $a' \neq b'$, we have $p \le \lfloor (m + n)/2 \rfloor -  1$, as in the 
preceding 
\lcnamecref{InitIsTau+EPf-transpose}.)
Furthermore, since $a' + 1 = b'$, we know that $\hampath'$ has no inner diagonals, so every inner diagonal of~$\hampath$ travels east.

\begin{case}
Assume all non-terminal diagonals travel east in~$\hampath'$, and $m' = n \ge a' + 3$.
\end{case}
Since $m' = n$, we may let $(\hampath')^*$ be the transpose of~$\hampath'$. All non-terminal diagonals travel north in~$(\hampath')^*$ (and $n \notin \{a + 1, a + 2\}$), so \cref{Init=TermE} tells us (perhaps after replacing $\hampath$ with its inverse) that the initial square~$(\init')^*$ of $(\hampath')^*$ is $\term_+' E$. 
We have
	$$ b' = m' + n - (a' + 3) \ge m' + 0 = m' ,$$
so $\term_+' = (m'-1, b' - m' + 1)$, which means 
	$ \term_+' E 
	= (0, a' + 1 ) $.
Therefore the initial square of $\hampath'$ is the transpose of this, namely $(a' + 1, 0)$. So the initial square of~$\hampath$ is $(a' + 1 + o, 0) = (a + 1, 0)$, which is of the form $(p,0)$. (Also, since $a' \le n - 3$, we have $a' \neq b'$, so $p \le \lfloor (m + n)/2 \rfloor -  1$, as in the 
preceding 
\lcnamecref{InitIsTau+EPf-transpose}s.)
\end{proof}

It is important to note that the possibilities for the square $\term_+E$ can be described quite precisely:

\begin{lem} \label{tau+E}
If $\hampath$ is a hamiltonian path in~$\Bmn$, with $m \ge n$, then either: \refnotelower{TauE}
	\begin{enumerate}
	\item $\term_+ E = (0,q)$ with $1 \le q \le n-1$,
	or
	\item $\term_+ E = (p, 0)$, with $\lceil (m + n - 1)/2 \rceil \le p \le m-1$ \textup(and $m \neq n$\textup).
	\end{enumerate}
\end{lem}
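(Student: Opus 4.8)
The plan is to read $\term_+E$ directly off the terminal diagonal, splitting into two cases according to whether $\term_+$ lies in the last column of the board. First I would record the basic constraints on the terminal diagonal: by \cref{00NotInit} together with \cref{diags(m+n-3)}, the terminal diagonal of~$\hampath$ has the form $S_a \cup S_b$ with $a \le b$ and $a + b = m + n - 3$. Since $2b \ge a + b$, this gives $b \ge \lceil (m+n-3)/2 \rceil$, and since $a \ge 0$ it gives $a = m + n - 3 - b \ge 0$. Within $S_b$ the southeasternmost square~$\term_+$ is simply the one with the largest first coordinate, i.e.\ $\term_+ = \bigl( \min(m-1,b),\ b - \min(m-1,b) \bigr)$.

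Then I would split on the position of~$b$ relative to $m-1$. If $b \ge m-1$, then $\term_+ = (m-1,\, b-m+1)$ sits in the last column, so applying the ``wraparound'' branch of the map~$E$ gives $\term_+E = \bigl(0,\, n-1-(b-m+1)\bigr) = (0, a+1)$, using $a+b = m+n-3$. Here $a+1 \ge 1$, and $a = m+n-3-b \le n-2$ because $b \ge m-1$, so $1 \le a+1 \le n-1$; this is alternative~(1) with $q = a+1$. If instead $b \le m-2$, then $\term_+ = (b,0)$ with $b < m-1$, so $\term_+E = (b+1, 0)$; here $b+1 \le m-1$, and $b+1 \ge (m+n-3)/2 + 1 = (m+n-1)/2$ forces $b+1 \ge \lceil (m+n-1)/2 \rceil$, which is alternative~(2) with $p = b+1$. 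It remains to verify $m \neq n$ in this second case: if $m = n$ then $a+b = 2m-3$ is odd, so $a < b$, hence $b \ge m-1$, contradicting $b \le m-2$.

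I do not expect a serious obstacle here; the argument is essentially a clean case analysis. The only points that need care are: using the correct (wraparound) branch of~$E$ on the last column so that $\term_+E$ really lands at $(0,a+1)$ inside the board rather than outside it; converting the inequality $b \ge (m+n-3)/2$ into the stated ceiling bound $p \ge \lceil(m+n-1)/2\rceil$; and the small parity observation that rules out $m = n$ in the case $b \le m-2$.
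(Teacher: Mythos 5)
Your proof is correct and is the natural direct computation the paper intends (the lemma is stated without an in-text proof): exclude $D_{m+n-2}$ via \cref{00NotInit}, write the terminal diagonal as $S_a\cup S_b$ with $a+b=m+n-3$, locate $\term_+$ as $(\min(m-1,b),\,b-\min(m-1,b))$, and split on whether $b\ge m-1$ (wraparound, giving $(0,a+1)$ with $1\le a+1\le n-1$) or $b\le m-2$ (giving $(b+1,0)$ with $b+1\ge\lceil(m+n-1)/2\rceil$, and the parity argument ruling out $m=n$). All the inequality and parity checks are right; no gaps.
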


Thus, \cref{InitIsTau+E} tells us that the initial square of~$\hampath$ is of the form $(0,q)$ or $(p,0)$ (perhaps after passing to the inverse). We will find all of the corresponding terminal squares in \cref{TermFor0q,TermForp0Small,TermForp0Large}.

\begin{prop} \label{TermFor0q}
Assume $\init =  (0,q)$ with $1 \le q \le n-1$ and $m \ge n \ge 3$. There is a hamiltonian path~$\hampath$ in~$\Bmn$ from $\init$ to $(x,y)$ if and only if either
	\begin{enumerate}
	
	\item \label{TermFor0q-a}
	$x + y = q - 1 \ge \nm$ and $\nm \le x \le \mm$,
	or
	
	\item \label{TermFor0q-b}
	$x + y = m + n - q - 2$ and $\mf \le x \le m - \np$ \textup(and $q \ge \nm$\textup).
	
	\end{enumerate}
\end{prop}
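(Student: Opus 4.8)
The plan is to apply \cref{stretch} to reduce the problem to the special case analyzed in \cref{AllNorth}, just as the surrounding propositions do. Since $\init = (0,q)$ has first coordinate $0$, it cannot equal any square $(p,0)$ with $1 \le p \le \lfloor(m+n)/2\rfloor - 1$, so \cref{InitIsTau+E} forces $\init = \term_+ E$ (for $\hampath$ itself, not its inverse — note that the inverse of a path starting at $(0,q)$ starts at $\widetilde{(0,q)} = (m-1,\dots)$, so if we are in the case where $\widetilde\hampath$ starts at $\term_+E$, we instead work with $\widetilde\hampath$; but by \cref{tau+E} applied to $\widetilde\hampath$ we get the inverse has initial square of the first form too, and symmetrizing we may assume $\hampath$ starts at $\term_+E = (0,q)$). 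First I would record that $\term_+E = (0,q)$ means $\term_+$ is the southeasternmost square of $S_b$ and $\term_+ N = (0, q)$, i.e.\ $\term_+ = (m-1, q-1)$ if $q \ge 1$... more carefully, $\term_+ E = (0, n-1-y_+)$ where $\term_+ = (m-1, y_+)$, giving $y_+ = n-1-q$ and $b = (m-1) + (n-1-q) = m+n-q-2$. Then $a = m+n-3-b = q-1$. So the terminal diagonal is $S_{q-1} \cup S_{m+n-q-2}$, and the constraint $a \le m-2 \le b$ from \cref{UsualEndpt} becomes $q - 1 \le m-2$ (automatic since $q \le n-1 \le m-1$) and $m+n-q-2 \ge m-2$, i.e.\ $q \le n$ (automatic).

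Next I would invoke \cref{stretch}: there is a hamiltonian path from $(0,q)$ to $(x,y)$ iff, for some valid $e$, there is a hamiltonian path $\hampath'$ in $\board{m'}{n}$ from $(0 - o - e_1, q)$ to $(x - o - e_2, y)$ with all non-terminal diagonals travelling north (the case \pref{stretch-HP-east} cannot arise here because it would require the initial square to be $(p,0)$ with $p \ge 1$, contradicting $\init=(0,q)$; alternatively it requires $m' = n$ and the transpose, which moves the initial square off the first column). Since $\init = (0,q) = \term_+E$ and the first coordinate is already $0$, we must have $o = 0$ and $e_1 = 0$, which forces $a \le n-1$, i.e.\ $q - 1 \le n-1$ (automatic), and pins $m' = m - e$. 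Thus $\hampath'$ is a hamiltonian path in $\board{m-e}{n}$ from $(0,q) = \term_+'E$ to $(x', y)$ with all non-terminal diagonals north, where $x' = x$ if $x + y = a = q-1$ and $x' = x - e$ if $x + y = b$. Now \cref{UsualEndpt} applies directly: such $\hampath'$ exists iff $(x', y)$ is $(\mm', a' - \mm')$ or $(\mf', b' - \mf')$ (with the first forced if $a' = b'$), where $\mm' = \lfloor (m-e-1)/2\rfloor$ etc., and $a' \le m'-2 \le b'$.

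The remaining work — and the step I expect to be the main bookkeeping obstacle — is translating the answer back: I must check that as $e$ ranges over the allowed even/odd values in $[0, \max(\min(m-n, b-a-1),0)]$, the set of squares $(x,y)$ obtained from $(\mm', a'-\mm')$ (with $a' = a = q-1$, so these lie in $S_{q-1}$) is exactly $\{(x, q-1-x) : \nm \le x \le \mm\}$, and the set obtained from $(\mf', b'-\mf')$ translated back by $x = x' + e$ (these lie in $S_b = S_{m+n-q-2}$) is exactly $\{(x, m+n-q-2-x): \mf \le x \le m - \np\}$. For the first family: $x' = \mm' = \lfloor(m-e-1)/2\rfloor$ ranges over $\nm$ (when $e$ is maximal $\approx m-n$) up to $\mm = \lfloor(m-1)/2\rfloor$ (when $e=0$), hitting every integer in between as $e$ decreases by $2$ (even case) or by the parity-respecting steps — here I would carefully verify the parity of $e$ matches and the endpoints; the constraint $a' \le m'-2$ reads $q-1 \le m - e - 2$, i.e.\ $e \le m - q - 1$, which combined with $e \le m-n$ and $q \le n-1$ is consistent, and the clause "$q \ge \nm$" in \pref{TermFor0q-a} should emerge from requiring $a = q-1 \ge \mm' \ge$ the minimal value, i.e.\ $q - 1 \ge \lfloor(m - (m-n))/2\rfloor - \text{(adjustment)} = \nm - \text{something}$; I'd pin this down by noting $\mm' \le a'$ is needed for $\sigma_a$ to exist, giving $q \ge \nm$ after the arithmetic. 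For the second family, $x = \mf' + e = \lfloor(m-e+1)/2\rfloor + e = \lceil(m+e)/2\rceil$ when... I'd track: as $e$ decreases from its max to $0$ (respecting parity), $\mf' + e$ decreases from $m - \np$ down to $\mf$, and the $a' = b'$ exclusion (forcing only $\sigma_a$) corresponds exactly to the case $b - a - 1 < 1$ where $e = 0$ is forced and $\sigma_b = \sigma_a$; the side condition $q \ge \nm$ in \pref{TermFor0q-b} comes from needing $b' \le \mf' + n - 1$ for $\sigma_b$ to exist, which after substituting $b = m+n-q-2$, $b' = b - e$ and maximizing $e$ yields $q \ge \nm$. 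I would present these two range computations as a short lemma-style calculation, being careful that "$e$ even iff $m+n$ even" (from \pref{stretch-even}) is exactly what makes the translated $x$-values hit consecutive integers rather than skipping, and that the boundary cases $q = \nm$, $q = n-1$ are handled correctly (the latter giving $\term_+E = (0, n-1)$... wait, $q \le n-1$; when $q = n-1$, $\init = (0, n-1)$ which is in both families' ambient diagonals appropriately). The ($\Leftarrow$) direction is then immediate by running \cref{stretch} and \cref{UsualEndpt} in reverse with the appropriate choice of $e$.
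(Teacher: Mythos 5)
Your overall strategy is the paper's own: compute $a = q-1$, $b = m+n-q-2$, $o = e_1 = 0$, use \cref{stretch} to pass to a hamiltonian path $\hampath'$ in $\board{m-e}{n}$ that starts at $\term_+'E$ and has all non-terminal diagonals travelling north, apply \cref{UsualEndpt} to get the two candidate terminal squares $\sigma_{a'}$ and $\sigma_{b'}$, and then translate back over the admissible range of~$e$. Your range bookkeeping agrees with the paper's, and your explicit observation that the parity condition \fullcref{stretch}{even} is exactly what makes the translated $x$-values run through consecutive integers is a point the paper leaves implicit in ``the argument can be reversed''; that part of your plan is sound. (Your aside about an ``$a'=b'$ exclusion'' is confused but harmless: here $b-a = m+n-2q-1 \ge m-n+1 > e$, so $a' < b'$ always.)

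The genuine gap is your dismissal of alternative \fullcref{stretch}{HP-east}. Your primary reason --- that this case ``would require the initial square to be $(p,0)$ with $p \ge 1$, contradicting $\init = (0,q)$'' --- misreads \cref{InitIsTau+E}: there the $(p,0)$ form is allowed to be the initial square of \emph{either} $\hampath$ \emph{or its inverse}, i.e.\ it may equal $\widetilde\term$ rather than $\init$, so nothing contradicts $\init=(0,q)$. Concretely, if the all-east case occurred it would produce the candidate terminal square $\widetilde{(q+1,0)} = (m-q-1,\,n-1)$, which for $q$ near $n-1$ has $x$ well below $\mf$ and hence lies outside the list in \pref{TermFor0q-b}; so this case must actually be refuted, not waved away. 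Your parenthetical ``alternatively, the transpose moves the initial square off the first column'' gestures at the right tool but is not an argument either: after transposing, \cref{Init=TermE} only forces $\term_+'E=(0,q)$ to be the initial square of $(\hampath')^*$ \emph{or of its inverse}, and ruling out the first disjunct (since $(\hampath')^*$ starts at $(q,0)\ne(0,q)$) still leaves the second to be killed. The paper does exactly this: it follows the surviving disjunct to the conclusion that $q$ would have to lie in $\{a,b'\}$, which fails because $q=a+1$ and $b' = m'+n-3-a \ge m' > q$. You need to carry out this (or an equivalent) contradiction to close the case.
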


\begin{proof}
We prove only ($\Rightarrow$), but the argument can be reversed. \refnote{TermFor0qReverse}
Note that $a = a' = q - 1$, 
	$$ b = m + n - 3 - a = m + n - q - 2 ,$$
$o = e_1 = 0$, and $\term_+' = (m' - 1, n - q - 1)$.
Also note that, since $a \le n-2$, the largest possible value of~$e$ is $m - n$. \refnote{LargestE}
\fullCref{stretch}{HP} gives us two cases to consider.

\setcounter{case}{0}

\begin{case}
Assume all non-terminal diagonals travel north in~$\hampath'$.
\end{case}
\Cref{UsualEndpt} tells us there are (at most) two possibilities for the terminal square $(x',y)$.

\begin{subcase}
Assume $x' + y = a$ and $x' = \lfloor (m' - 1)/2 \rfloor$.
\end{subcase}
We have $x + y = a = q - 1$ and (since $o = e_2 = 0$)
		$$x 
		= x'
		= \lfloor (m' - 1)/2 \rfloor
		= \lfloor (m - e - 1)/2 \rfloor
		.$$
The smallest possible value of~$e$ is~$0$, so $x \le \mm$. 
Conversely, since the largest possible value of~$e$ is $m - n$,
we have $x \ge \lfloor (m - (m-n) - 1)/2 \rfloor = \nm$. (Therefore $q - 1 \ge x \ge \nm$.)

\begin{subcase}
Assume $x' + y = b' \neq a$ and $x' = \lfloor m'/2 \rfloor$.
\end{subcase}
We have 
	$$x + y = b = m + n - q - 2$$
and 
		$x 
		= x' + e
		= \lfloor (m + e)/2 \rfloor$. 
The smallest possible value of~$e$ is~$0$, so $x \ge \mf$.
Conversely, since the largest possible value of~$e$ is $m - n$, we have
	$$x \le \lfloor (2m - n)/2 \rfloor = m - \lceil n/2 \rceil = m - \np .$$
	Therefore 
		$$m + n - q - 2 
		= x + y
		\le (m - \np) + (n - 1) 
		,$$
	so $q \ge \np - 1 = \nm$.

\begin{case}
Assume all non-terminal diagonals travel east in~$\hampath'$ and $m' = n \ge a + 3$.
\end{case}
Since $m' = n$, we may let $(\hampath')^*$ be the transpose of~$\hampath'$. Then all non-terminal diagonals travel north in~$(\hampath')^*$ (and $n \notin \{a+1,a+2\}$), so \cref{Init=TermE} tells us that either the initial square or the terminal square of~$(\hampath')^*$ is
	$\term_+'E = (0, q)$. 
Since the initial square of~$(\hampath')^*$ is $(q,0) \neq (0, q-1)$, we conclude that the terminal square is $(0,q)$, so $q \in \{a,b'\}$. However, $q = a + 1$ and $b' = m' + n - 3 - a \ge m' > q$. This is a contradiction.
\end{proof}

\begin{prop} \label{TermForp0Small}
Assume $1 \le p \le \lfloor (m + n)/2 \rfloor - 1$ and $m \ge n \ge 3$. There is a hamiltonian path~$\hampath$ in~$\Bmn$ from $(p,0)$ to $(x,y)$, such that all inner diagonals travel east, if and only if either
	\begin{enumerate}
	
	\item \label{TermForp0Small-y=nm}
	$x + y = p - 1 \ge \nm$,  and $y = \nm$,
	or
	
	\item \label{TermForp0Small-y=nf}
	$x + y = m + n - p - 2$, $y = \nf$, and $\nm \le p \le n-1$.

	\end{enumerate}
\end{prop}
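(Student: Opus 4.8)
The plan is to prove only the forward implication; the converse reverses it, building the required paths from the explicit hamiltonian paths furnished by \cref{UsualEndpt} and \cref{init=tauplus}. So suppose $\hampath$ is a hamiltonian path from $(p,0)$ to $(x,y)$ in which every inner diagonal travels east, and adopt the standing notation with $q=0$. By \cref{iotaE=tau} the terminal diagonal contains $(p,0)E^{-1}=(p-1,0)\in S_{p-1}$, so $a=p-1$, $b=m+n-p-2$, and $e_1=0$; the bound $p\le\lfloor(m+n)/2\rfloor-1$ forces $a<b$. Applying \cref{stretch} produces the reduced hamiltonian path $\hampath'$ from $(p-o,0)$ to $(x',y)$ in $\board{m'}{n}$, and \fullcref{stretch}{HP} splits the argument. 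Because every inner diagonal of $\hampath$ travels east, the non-rowful inner diagonals of $\hampath$ persist in $\hampath'$ and still travel east; hence conclusion~\fullcref{stretch}{HP-east} holds exactly when $\hampath$ has a non-rowful inner diagonal, i.e.\ when $a\le n-3$, i.e.\ when $p\le n-2$, whereas conclusion~\fullcref{stretch}{HP-north} holds exactly when $p\ge n-1$, in which case $\hampath'$ has no inner diagonal at all and $b'=a'+1$.

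\emph{Case $p\le n-2$.} Here $o=0$, $e=m-n$, and $m'=n$, so $\hampath'$ lives on the square board $\board{n}{n}$. Passing to the transpose $(\hampath')^*$ gives a hamiltonian path in which all non-terminal diagonals travel north; its initial square is $(0,p)$, which is obtained by stepping east from the southeasternmost square of $S_{b'}$ in $\board{n}{n}$, so \cref{UsualEndpt} applies and $\tau\bigl((\hampath')^*\bigr)$ is $(\nm,a'-\nm)$ or $(\nf,b'-\nf)$ whenever that square exists. Transposing back, $(x',y)$ is $(a'-\nm,\nm)$ or $(b'-\nf,\nf)$; the substitutions $x=x'+e_2$, $a'=p-1$, $b'=2n-2-p$ then turn these into $x+y=p-1$, $y=\nm$ and $x+y=m+n-p-2$, $y=\nf$ respectively, while the conditions that the two candidate terminal squares be genuine squares of $\board{n}{n}$ become $p-1\ge\nm$ and $\nm\le p\le n-1$. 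This is precisely conclusion~\pref{TermForp0Small-y=nm} or~\pref{TermForp0Small-y=nf}.

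\emph{Case $p\ge n-1$.} Now $b'=a'+1$, and I feed the initial square $(p-o,0)$ --- which is not of the form $(0,q)$ --- into \cref{Init=TermE}. A short computation (using \cref{tau+E} to exclude $\iota(\hampath')=\term_+'E$, and \cref{UsualEndpt} to exclude the ``inverse'' and ``transpose-inverse'' alternatives, whose terminal squares would have second coordinate $n-1$, impossible for $n\ge3$) leaves two configurations. If $a'+1=b'=n=m'-2$ and $\iota(\hampath')=(n,0)$, then $p\ge n$, and \cref{init=tauplus} applied to $\board{m'}{n}=\board{n+2}{n}$ gives $\tau(\hampath')=(\nf,\nm)$; since $\nf+\nm=n-1=a'$ and $o=p-n$, the substitution $x=x'+o$ yields conclusion~\pref{TermForp0Small-y=nm}. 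If instead $m'=n=a'+2=b'+1$, then $p=n-1$; here $\hampath'$ has no inner diagonal, so I may apply \fullcref{OuterCan}{East} to replace $\hampath'$ by $\hampath'_E$, transpose $\hampath'_E$ to a hamiltonian path with all non-terminal diagonals travelling north and with the initial square to which \cref{UsualEndpt} applies, and conclude that $\tau(\hampath')$ is $(n-2-\nm,\nm)$ or $(n-1-\nf,\nf)$ --- which unwind to conclusions~\pref{TermForp0Small-y=nm} and~\pref{TermForp0Small-y=nf} at $p=n-1$.

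The bulk of the work is the bookkeeping in the two unwinding steps: carrying $(x',y)$ back to $(x,y)$ through the shifts $o$, $e$, $e_2$, and checking that the existence conditions on the candidate terminal squares in \cref{UsualEndpt}, together with the admissible range of $e$, reduce to exactly $x+y=p-1\ge\nm$ and $\nm\le p\le n-1$. The one genuinely delicate point is the transpose configuration at the end of the second case: \cref{UsualEndpt} does not apply to $\hampath'$ directly --- its initial square is $(\term_+'E)^*$ rather than $\term_+'E$ --- so one must route through \fullcref{OuterCan}{East} and the transpose automorphism of $\board{n}{n}$, and separately verify (as in the proof of \cref{Init=TermE}) that the transpose-inverse alternative of \cref{Init=TermE} does not in fact arise here.
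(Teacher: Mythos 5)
Your argument is correct and follows essentially the same route as the paper: reduce via \cref{stretch}, handle the square-board case by transposing and applying \cref{UsualEndpt}, and handle the rowful case via \cref{init=tauplus}. The only differences are cosmetic --- you split the cases at $p\le n-2$ versus $p\ge n-1$ rather than the paper's $p\le n-1$ versus $p\ge n$ (your explicit use of \fullcref{OuterCan}{East} before transposing at $p=n-1$ is, if anything, slightly more careful), and your detour through \cref{Init=TermE} and \cref{tau+E} in the second case is harmless but unnecessary, since the reduction already pins down the initial square of $\hampath'$ and \cref{init=tauplus} applies directly.
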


\begin{proof}
We prove only ($\Rightarrow$), but the argument can be reversed. \refnote{TermForp0SmallReverse}
We may assume that all non-terminal diagonals travel east in~$\hampath$ \fullcsee{OuterCan}{East}.
Note that $q = 0$, $a = p-1$, and $e_1 = 0$.
We consider two cases. 

\setcounter{case}{0}

\begin{case}
Assume $p \le n - 1$.
\end{case}
Then $\term_+ = (m-1,n - p - 1)$ and $\term_+E = (0,p)$.
We have $a = p - 1 \le n - 2$, so $e = m - n$ and $o = 0$, so 
	$$m' = m - e = m - (m -n) = n .$$
Therefore, we may let $(\hampath')^*$ be the transpose of~$\hampath'$. The initial square of~$(\hampath')^*$ is $(0,p) = \term_+'E$, and all non-terminal diagonals travel north in~$(\hampath')^*$, so \cref{UsualEndpt} tells us  there are only two possible terminal squares. (Note that, since we are considering the transpose, the role of~$x$ in \cref{UsualEndpt} is played by~$y$ here.)

\begin{subcase}
Assume $x' + y = a$ and $y = \lfloor (m' - 1)/2 \rfloor$.
\end{subcase}
We have
	$x + y = x' + y = a = p - 1$ and 
		$y
		= \nm$.
	Since $x + y = p - 1$, this implies $p - 1 \ge \nm$.
	
\begin{subcase}
Assume $x' + y = b'$ and $y = \lfloor m'/2 \rfloor$.
\end{subcase}
We have 
	$$x + y = b = m + n - 3 - a = m + n - p - 2$$
and
		$y
		= \nf$.
		Since $x + y = m + n - p - 2$ and $x \le m - 1$, this implies $p \ge \nm$.\refnote{TermForp0SmallPf-nm}

\begin{case}
Assume $p \ge n$.
\end{case}
All inner diagonals are rowful (since $a = p - 1 \ge n-1$), and they all travel east in~$\hampath$ (by assumption), so $\hampath'$ has no inner diagonals, which means $b' \le a' + 1$. Also, since 
	$$a = p - 1 \le \left\lfloor \frac{m + n - 4}{2} \right\rfloor < \frac{m + n - 3}{2} \le b,$$
we know $a \neq b$, so $a' \neq b'$. Therefore $b' = a' + 1$.

Since $a = p-1 \geq n - 1$, we have $o = a - n + 1 = p - n$, which means $p' = n$, so $a' = n - 1$. Since $b' = a' + 1$, this implies $m' = n + 2$. \refnote{TermForp0SmallPf-m'}
Then $m' \neq n$, so \fullcref{stretch}{HP} tells us that all non-terminal diagonals travel north in~$\hampath'$.
Then \cref{init=tauplus} tells us that $y = \lfloor (m' - 1)/2 \rfloor - 1 = \nm$. We also have $x + y = a = p - 1$ (and $p - 1 \ge n - 1 \ge \nm$).
\end{proof}

\begin{prop} \label{TermForp0Large}
Assume $\lfloor (m + n)/2 \rfloor \le p \le m-1$, and $m > n \ge 3$. There is a hamiltonian path~$\hampath$ in~$\Bmn$ from $(p,0)$ to $(x,y)$ if and only if either
	\begin{enumerate}
	
	\item \label{TermForp0Large-a}
	$x + y = m + n - p - 2$, $m - p + \nm \le x \le \mm$, and $p \neq (m + n - 1)/2$,
	or
	
	\item \label{TermForp0Large-b}
	$x + y = p - 1$ and $\mf \le x \le p - \np$,
	or
	
	\item \label{TermForp0Large-special}
	$x = \mm$, $y = \nm$, and $p = (m + n - 1)/2$ \textup(so $m + n$ is odd\/\textup).

	\end{enumerate}
\end{prop}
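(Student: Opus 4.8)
The plan is to prove only the forward implication $(\Rightarrow)$ — the argument reverses, exactly as in the proofs of \cref{TermFor0q,TermForp0Small} — by feeding the given hamiltonian path into \cref{stretch}. Since $(p,0)$ is the initial square, \cref{iotaE=tau} gives $p-1\in\{a,b\}$, and the hypothesis $p\ge\lfloor(m+n)/2\rfloor\ge(m+n-1)/2=(a+b)/2$ forces $p-1=b$; hence $a=m+n-2-p$ and $e_1=e$. Because $p\le m-1$ we have $a\ge n-1$, so $o=\max(a-n+1,0)$ equals $m-1-p$ (or $0$, when $p=m-1$), and then a short computation gives $a'=a-o=n-1$, $b'=b-o-e=p'-1=m'-2$, and $p'=m'-1$. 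Thus in $\board{m'}{n}$ we have $\term_+'=(m'-2,0)$ and $\term_+'E=(p',0)$, so the path $\hampath'$ supplied by \cref{stretch} starts at $\term_+'E$. Moreover the second alternative of \fullcref{stretch}{HP} cannot occur, since it would require $m'=n\ge a+3$ whereas $a\ge n-1$ gives $a+3>n$; hence all non-terminal diagonals of $\hampath'$ travel north and $m'\ge n$.

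Now \cref{UsualEndpt}, applied to $\hampath'$, tells us $a'\le m'-2\le b'$ — so $m'=b'+2\ge n+1$ — and that the terminal square $(x',y)$ of $\hampath'$ is one of $\bigl(\lfloor(m'-1)/2\rfloor,\ a'-\lfloor(m'-1)/2\rfloor\bigr)$ or $\bigl(\lfloor m'/2\rfloor,\ b'-\lfloor m'/2\rfloor\bigr)$, and must be the former when $a'=b'$ — equivalently, since $a'=n-1$ and $b'=m'-2$, when $m'=n+1$, which (tracing back through $m'=m-2o-e$) is precisely the case $a=b$, i.e.\ $p=(m+n-1)/2$. Lifting back by $x=x'+o+e_2$, with $e_2=0$ or $e$ according to whether $(x,y)\in S_a$ or $S_b$, and using that $2o$ is even so the $o$-terms cancel, one gets: in the first case $x+y=a=m+n-p-2$ with $x=\lfloor(m-e-1)/2\rfloor$; in the second $x+y=b=p-1$ with $x=\lfloor(m+e)/2\rfloor$.

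It remains to range over the admissible $e$. By \fullcref{stretch}{bound}, $0\le e\le e_{\max}$ with $e_{\max}=\max(\min(m-n,b-a-1),0)=\max(2p-m-n,0)$ (because $b-a-1=2p-m-n\le m-n$), and by \fullcref{stretch}{even}, $e$ is even when $m+n$ is even; in every case $\lfloor(m-e-1)/2\rfloor$ and $\lfloor(m+e)/2\rfloor$ take consecutive integer values as $e$ varies. Substituting $e=0$ and $e=e_{\max}$ into the two formulas yields the bounds $m-p+\nm\le x\le\mm$ (conclusion~\pref{TermForp0Large-a}) in the first case and $\mf\le x\le p-\np$ (conclusion~\pref{TermForp0Large-b}) in the second; the second case occurs exactly when $a'\ne b'$, i.e.\ $p\ne(m+n-1)/2$, while if $p=(m+n-1)/2$ only the first case survives and $e_{\max}=0$ pins $(x,y)$ down to $(\mm,\nm)$ — conclusion~\pref{TermForp0Large-special}, which is why conclusion~\pref{TermForp0Large-a} must exclude that value of $p$. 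Throughout one uses that $(x,y)$ is a genuine square of $\Bmn$: this requirement translates exactly into the condition that the square provided by \cref{UsualEndpt} be a square of $\board{m'}{n}$, so the stated inequalities on $x$ together with $(x,y)$ being a square describe precisely the attainable terminal squares.

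I expect the main obstacle to be this last step: carefully transporting the floor-expressions $\lfloor(m'-1)/2\rfloor$ and $\lfloor m'/2\rfloor$ from $\board{m'}{n}$ back to $\Bmn$ through the translation by $o$ and $e_2$, verifying that the parity constraint on $e$ (active only when $m+n$ is even) leaves no gaps in the interval of attainable $x$, and confirming that the degenerate regime $e_{\max}\le0$ is absorbed exactly by the exceptional conclusion~\pref{TermForp0Large-special}. Once the bookkeeping is in place, the reverse implication follows by reading the chain \cref{stretch}~$+$~\cref{UsualEndpt} backwards: given an admissible $(x,y)$, the displayed formulas determine a suitable $e$ (of the correct parity and in range), \cref{UsualEndpt} produces the required $\hampath'$ in $\board{m'}{n}$, and \cref{stretch} lifts it to the desired $\hampath$ in $\Bmn$.
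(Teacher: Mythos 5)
Your proposal is correct and follows essentially the same route as the paper: reduce via \cref{stretch}, observe that $\hampath'$ starts at $\term_+'E$, apply \cref{UsualEndpt}, and translate the two candidate terminal squares back through $o$ and $e_2$ while letting $e$ range over $0$ to $\max(2p-m-n,0)$ with the parity constraint. The only divergence is that you dismiss the all-east alternative of \fullcref{stretch}{HP} as vacuous (since $a \ge n-1$ is incompatible with $n \ge a+3$), whereas the paper derives a contradiction from that case via the transpose and \cref{UsualEndpt}; both dispositions are valid.
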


\begin{proof}
We prove only ($\Rightarrow$), but the argument can be reversed. \refnote{TermForp0LargeReverse}
Note that $b = p-1$, $o = a - n + 1$ (since $b \le m - 2$), $a' = n - 1$, $q = 0$, $e_1 = e$, 
	and
	$\term_+' = (b',0)$.
The largest possible value of~$e$ is $2p - m - n$, \refnote{TermForp0LargePf-e}
unless $m + n$~is odd and $p = (m + n - 1)/2$, in which case the only value of~$e$ is~$0$.
As usual, \fullCref{stretch}{HP} gives us two cases to consider.

\setcounter{case}{0}

\begin{case}
Assume all non-terminal diagonals travel north in~$\hampath'$.
\end{case}
\Cref{UsualEndpt} tells us there are (at most) two possibilities for the terminal square $(x',y)$ of~$\hampath'$.

\begin{subcase}
Assume $x' + y = a'$ and $x' = \lfloor (m' - 1)/2 \rfloor$.
\end{subcase}
We have $x + y = a = m + n - 3 - b = m + n - p - 2$. Also,
	$$x = x' + o + e_2 
	= \lfloor (m' - 1)/2 \rfloor + o + 0
	= \lfloor (m - e - 1)/2 \rfloor
	. $$
The smallest possible value of~$e$ is $0$, so $x \le \mm$.
Conversely, if $p \neq (m + n - 1)/2$, then the largest possible value of~$e$ is $2p - m - n$,
so $x \ge m - p + \nm$. 
However, if $p = (m + n - 1)/2$, then $e = 0$, so $x = \mm$ and 
	$$y = a - x 
	= m + n - p - 2 - \mm
	= \nm 
	.$$

\begin{subcase}
Assume $x' + y = b'$, $x' = \lfloor m'/2 \rfloor$, and $a' \neq b'$.
\end{subcase}
We have $x + y = b = p - 1$. Also, since $a' \neq b'$, we have $a \neq b$, so $b \neq (m + n - 3)/2$, which means $p \neq (m + n - 1)/2$.
Furthermore,
	$$x 
	= x' + o + e
	= \lfloor m' /2 \rfloor + o + e
	= \lfloor (m + e)/2 \rfloor
	. $$
The smallest possible value of~$e$ is $0$, so $x \ge \mf$.
	Conversely, since the largest possible value of~$e$ is $2p - m - n$,
	we have $x \le p - \np$. 

\begin{case}
Assume all non-terminal diagonals travel east in~$\hampath'$, and $m' = n \ge a' + 3$.
\end{case}
Since $m' = n$, we may let $(\hampath')^*$ be the transpose of~$\hampath'$. Then all non-terminal diagonals travel north in~$(\hampath')^*$, so \cref{Init=TermE} tells us that $\term_+'E = (p',0)$ is either the initial square or the terminal square of~$(\hampath')^*$. Since the initial square of~$(\hampath')^*$ is $(0,p') \neq (p',0)$, it must be the terminal square that is $(p',0)$. So the inverse of~$(\hampath')^*$ is a hamiltonian path from $(p',0) = \term_+'E$ to $(0,p')$. This contradicts \cref{UsualEndpt}.
\end{proof}

\begin{proof}[\bf Proof of \cref{InitSquares,TermSquares}]
It is immediate from \cref{InverseHP} that a square $\sigma$ is the initial square of a hamiltonian path if and only if its inverse $\widetilde\sigma$ is the terminal square of a hamiltonian path. Therefore, \cref{InitSquares,TermSquares} are logically equivalent: the squares listed in one \lcnamecref{InitSquares} are simply the inverses of the squares listed in the other. So it suffices to prove \cref{TermSquares}. That is, we wish to show that the terminal squares (and the inverses of the initial squares) listed in \cref{TermFor0q,TermForp0Small,TermForp0Large} combine to give precisely the squares listed in \cref{TermSquares}.

\setcounter{case}{0}

\begin{case}
The inverses of the initial squares in \cref{TermFor0q}.
\end{case}
The set of initial squares is $\{\, (0,q) \mid \nm \le q \le n - 1 \,\}$. Their inverses form the set
	$\{\, (m-1, y) \mid 0 \le y \le n - 1 - \nm \,\}$.
Since $n - 1 - \nm = \nf$, these are precisely the squares in \fullcref{TermSquares}{m}.

\begin{case}
The inverses of the initial squares in \cref{TermForp0Small,TermForp0Large}.
\end{case}
The combined set of these initial squares is $\{\, (p,0) \mid \nm \le p \le m - 1 \,\}$. Their inverses form the set
	$\{\, (x, n-1) \mid 0 \le x \le m - 1 - \nm \,\}$.
Since $m - 1 - \nm = m - \np$, these are precisely the squares in \fullcref{TermSquares}{n}.

\begin{case}
The terminal squares in \cref{TermForp0Small}, and also~\fullref{TermForp0Large}{special} when $m$~is even and $n$~is odd.
\end{case}
The terminal squares in \fullcref{TermForp0Small}{y=nm} have $y = \nm$, so $x = p - 1 - \nm = p - \np$. Since $p$ can take on any value from $\np$ to $\lfloor (m + n)/2 \rfloor - 1$, this means that $x$ ranges from~$0$ to \refnotelower{TermForp0Small-y=nm-UpperBound}
	$$ \lfloor (m + n)/2 \rfloor - 1 - \np =
		\begin{cases}
		\mf - 2 & \text{if $m$ is even and $n$~is odd} , \\
		\mf - 1 & \text{otherwise}
		. \end{cases} $$
Thus, these are precisely the squares listed in \fullcref{TermSquares}{nm}, except that the square $(\mf - 1, \nm)$ is missing when $m$~is even and $n$~is odd. Fortunately, in this case, the missing square is precisely the square listed in \fullcref{TermForp0Large}{special}.

The terminal squares in \fullcref{TermForp0Small}{y=nf} have $y = \nf$, so $x$ ranges from
	$$ m + n - (n-1) - 2 - \nf
	= m - \nf - 1 $$
to
	$$   m + n - \nm - 2 - \nf = m - 1 .$$
Thus, these are precisely the squares listed in \fullcref{TermSquares}{nf}.

\begin{case} \label{ListBottom}
The terminal squares in \namecref{TermFor0q}s \fullref{TermFor0q}{a} and \fullref{TermForp0Large}{a}, and also \fullref{TermForp0Large}{special} when $m$~is odd and $n$~is even.
\end{case}
The terminal squares in \fullcref{TermFor0q}{a} are:
	$$ \{\, (x,y) \mid \nm \le x \le \mm, \ x + y \le n - 2  \,\} .$$
Since $x \ge \nm$, we have $y \le \nf - 1$. So these are the squares listed in \fullcref{TermSquares}{bottom} that satisfy $x + y \le n - 2$.

Now consider \fullcref{TermForp0Large}{a}. 
Since $x + y = m + n - p - 2$, the constraint $x \ge m - p + \nm$, can be replaced with 
	$$y \le m + n - p - 2 - (m - p + \nm) = n - \nm - 2 = \nf - 1 .$$
Also, the range $\lceil (m + n)/2 \rceil \le p \le m-1$ means that $x + y$ is allowed to take any value from $n - 1$ to 
	$$m + n - \lceil (m + n)/2 \rceil - 2 = \lfloor (m + n)/2 \rfloor - 2 .$$
Since $x + y \ge n - 1$ and $y \le \nf - 1$, we have $x \ge (n-1)-(\nf-1) > \nm$.
Thus, the  terminal squares in \fullcref{TermFor0q}{b} are:
	$$ \{\, (x,y) \mid \nm \le x \le \mm, \  y \le \nf - 1, \, n-1 \le x + y \le \lfloor (m + n)/2 \rfloor - 2  \,\} .$$

Therefore, the union of these two sets consists of precisely the squares $(x,y)$ listed in \fullcref{TermSquares}{bottom} that satisfy 
	$$ x + y \le \lfloor (m + n)/2 \rfloor - 2 .$$
However, any square $(x,y)$ listed in  \fullcref{TermSquares}{bottom} satisfies
	$ x + y \le \mm + \nf - 1$.
Since
	$$ \lfloor (m + n)/2 \rfloor - 2 
	\ge \frac{m}{2} + \frac{n}{2} - \frac{5}{2} 
	\ge \mm + (\nf - 1) - 1 ,$$
we conclude that in order for a square $(x,y)$ of \fullcref{TermSquares}{bottom} to be missing from the union, equality must hold throughout (so $m$~is odd and $n$~is even) and we must have $x = \mm$ and $y = \nf - 1 = \nm$. This is precisely the square listed in \fullcref{TermForp0Large}{special}. So these three sets together constitute the squares listed in \fullcref{TermSquares}{bottom}.

\begin{case}
The terminal squares in \fullcref{TermFor0q}{b} and \fullcref{TermForp0Large}{b}.
\end{case}
First, we consider \fullcref{TermForp0Large}{b}. Since $x + y = p - 1$, the constraint $x \le p - \np$ can be replaced with
	$y \ge \nm$.
Also, since $x + y \le m - 2$, this implies $x \le m - 2 - \nm < m - \np$.
Therefore, the set of terminal squares is 
	$$ \{\, (x,y) \mid \mf \le x \le m - \np, \, \nm \le y \le n - 1, \ \lfloor (m + n)/2 \rfloor - 1 \le x + y \le m - 2 \,\} .$$

We now consider \fullcref{TermForp0Large}{b}.
The constraint $\nm \le q \le n - 1$ means that $x + y$ is allowed to take any value from $m - 1$ to $m + \nf - 1$. However, since $x \le m - \np$ and $y \le n - 1$, the upper bound is redundant. 
Also, since $x \le m - \np$ and $x + y \ge m - 1$, we must have $y \ge \nm$.
Therefore, the set of terminal squares is 
	$$ \{\, (x,y) \mid \mf \le x \le m - \np, \, \nm \le y \le n - 1, \ m - 1 \le x + y \,\} .$$

Therefore, the union of these two sets consists of precisely the squares $(x,y)$ listed in \fullcref{TermSquares}{top} that satisfy 
	$$ x + y \ge \lfloor (m + n)/2 \rfloor - 1 .$$
However, it is easy to see that every one of the squares satisfies this condition \refnote{TermSquaresTopLowerBound}
(since $x \ge \mf$ and $y \ge \nm$), so we conclude that these two sets constitute the squares listed in \fullcref{TermSquares}{top}.
\end{proof}

\begin{rem} \label{n=1or2}
The above results assume $n \ge 3$. For completeness, we state, without proof, the analogous results for $n = 1,2$. It was already pointed out in \cref{n=2HC} that every square is both an initial square and a terminal square in these cases, but we now provide a precise list of the pairs of squares that can be joined by a hamiltonian path.

	\begin{enumerate}
	
	\item Assume $n = 1$. Then every square in the board is of the form $(*,0)$. There is a hamiltonian path from $(p,0)$ to $(x,0)$ if and only if $(p,0) = (x,0)E$. \refnote{n=1Pf}
More precisely, every hamiltonian path is obtained by removing an edge from the hamiltonian cycle $(E^m)$.

\item  \label{n=1or2-2}
Assume $m \geq n = 2$. \Cref{Endpts-n=2Fig} 
lists the initial square $(p,q)$ and terminal square $(x,y)$ of every hamiltonian path in $\board{m}{2}$. \refnote{n=2Pf}
	\end{enumerate}
\end{rem}

\begin{figure}[b]
\renewcommand{\arraystretch}{1.2} 
$$\begin{array}{|c|c|c|c|}
\hline
		& \hbox{initial} & \hbox{terminal} & \\[-5pt] 
		& \hbox{square} & \hbox{square} & \hbox{restrictions, if any} \\[-5pt] 
		& (p,q)		& (x,y) 		& \\
\hline\hline
A_2 & (p,q) & (p,q)E^{-1} &  \\
B_2 & (0,1) & (0,0) & \\
C_2 & (1,0) & (m-2,1) & \\
D_2 & (m-1,1) & (m-1,0) & \\
E_2 & (0,1) & (m-2,1) & m \ge 3 \\
F_2 & (1,0) & (m-1,0) & \text{$m$ is odd} \\
G_2 & (1,0) & (m-1,0) & m \ge 4 \\
H_2 & (p,1) & (m-2-p,1) & \text{$m \ge 4$ \ and \ $\mp \le p \le m-2$} \\
I_2 & (p,0) & (m-p,0) & \text{$m \ge 4$ \ and \ $p \ge \mp + 1$} \\
J_2 & (p,0) & (p-2,1) & \text{$m \ge 5$ \ and \ $p \notin \{ 1, \mf, \mf+1 \} \smallsetminus \{\mm\}$} \\
\hline
\end{array}$$
\caption{Endpoints of the hamiltonian paths in $\Bmn$ when $m \ge n = 2$.}
\label{Endpts-n=2Fig}
\end{figure}

\nocite{*}
\bibliographystyle{amsplain}

\bibliography{}


\AtEndDocument{

\newpage

\thispagestyle{empty}

\markboth{Appendix: Notes to aid the referee}{Appendix: Notes to aid the referee}

\begin{appendix}

\section{Notes to aid the referee}

\bigskip\hrule width\textwidth \bigbreak

\begin{aid} \label{OuterCanPf}
\textbf{Proof of \cref{OuterCan}.}
Each of~$\hampath_E$ and~$\hampath_N$ is the union of a path from~$\init$ to~$\term$ and a (possibly empty) collection of disjoint cycles. (In the terminology of \cite[Defn.~2.6]{Forbush}, $\hampath_E$ and~$\hampath_N$ are \emph{spanning quasi-paths}.) Let $S_a \cup S_b$ be the terminal diagonal (with $a \le b$), and let $W$ be the union of $S_a \cup S_b$ and all of the inner diagonals. Since $\hampath$ is connected, \cref{OuterEastOK} implies that all of~$W$ is contained in a single component of~$\hampath_E$, and also in a single component of~$\hampath_N$. 

\pref{OuterCan-East}
Let $\sigma$ be any square of~$\Bmn$ that not in~$W$, so $\sigma$ is on some outer subdiagonal~$S_i$. If $i < a$, then $\sigma E^{a-i} \in S_a$; if $i > b$, then $\sigma \in S_b E^{i-b}$. In either case, we see that $\sigma$ is in the same component of~$\hampath_E$ as~$W$. So all of~$\Bmn$ is in a single component of~$\hampath_E$, which means that $\hampath_E$ is a hamiltonian path.

\pref{OuterCan-North}
If $S_{n-1} \cup S_{m-2}$ is an outer diagonal, then 
	$$ \text{$\bigl\{\, (x,y) \in \Bmn \mid x \in \{0,m-1\} \,\bigr\}$ is disjoint from~$W$} ,$$
and therefore travels north in~$\hampath_N$. This means that $\hampath_N$ contains the cycle $[(0,0)](N^{2n})$, and therefore is not a hamiltonian path.

Now suppose $S_{n-1} \cup S_{m-2}$ is not an outer diagonal, which means that it is contained in~$W$ (and $a \le \min(n-1,m-2)\le \max(n-1,m-2) \le b$).
Let $(x,y)$ be any square of~$\Bmn$ that not in~$W$, so $(x,y)$ is on some outer subdiagonal~$S_i$. 
	\begin{itemize}
	\item If $i < a$, then $(x,y) N^{a-i} \in S_a$.
	\item If $i > b$, and $x \le b$, then $(x,y) \in S_b N^{i-b}$.
	\item If $x > b$, then $x > b \ge m - 2$, so we must have $b = m - 2$ and $x = m - 1$. Since $b = m - 2$, we see that every square in the eastmost column of~$\Bmn$ is on an outer diagonal, and therefore travels north in~$\hampath_N$. We also have $a = m + n - 3 - (m - 2) = n - 1$. So $(x,y) = (0,n-1)N^{y+1} \in S_a N^{y+1}$.
	\end{itemize}
 In each case, we see that $(x,y)$ is in the same component of~$\hampath_N$ as~$W$. So all of~$\Bmn$ is in a single component of~$\hampath_N$, which means that $\hampath_N$ is a hamiltonian path.
\end{aid}

\begin{aid} \label{Sa+1}
Since $\term_+E = (0, n - 1 - y)$, we have $\term_+EN^{-1} = (0, n - 2 - y)$. (Note that $n - 2 - y \ge 0$, since $y \neq n-1$ by \cref{00NotInit}.) So $\term_+EN^{-1} \in S_i$ and $\term_+E \in S_{i+1}$, where $i = n - 2 - y$. Since $\term_+EN^{-1}$ and~$\term_+E$ are both in the terminal diagonal $S_a \cup S_b$ (and $a \le b$), we conclude that $i = a$ and $b = i + 1 = a + 1$.
\end{aid}

\begin{aid} \label{ApplyToTranspose}
The arguments of \cref{Init=TermEPf=x=m-1} up to this point yield a contradiction unless $(m-1,0)$ and $(0,m-2)$ both travel north. By applying the same arguments to the transpose~$\widetilde\hampath$ of~$\hampath$, we conclude that these two squares must also travel north in~$\widetilde\hampath$. This means that the transposes of these squares, namely,  $(0,m-1)$ and $(m-2,0)$, travel east in~$\hampath$.
\end{aid}

\begin{aid} \label{TauPlusMustNorth}
Since $\term_+ E$ is the initial square, it has no in-arcs, so $\term_+$ cannot travel east. And $\term_+$ cannot be the terminal square. (Otherwise, adding the arc from $\term_+$ to $\term_+ E$ would yield a hamiltonian cycle in~$\Bmn$, contradicting \cref{00NotInit}.) So $\term_+$ must travel north.
\end{aid}

\begin{aid} \label{sigmaBisSquare}
We know that either $\sigma_a$ or~$\sigma_b$ is a square that travels east, so, in particular, either $\sigma_a$ or~$\sigma_b$ must exist as a square of~$\Bmn$. This means that either $a - \mm \ge 0$ or $b - \mm \le n - 1$. 

If $a - \mm \ge 0$, then
	\begin{align*}
	 b - \mm 
	 &= (m + n - 3 - a) - \mm 
	 = \mm + n - 2 - a 
	 \\&= n - 2 - (a - \mm) \le n - 2 
	 < n - 1 
	 . \end{align*}
Thus, in either case, we have $b - \mm \le n - 1$, so $\sigma_b$ exists. (That is, if $\sigma_a$ exists, then $\sigma_b$ also exists, but not conversely.)
\end{aid}

\begin{aid} \label{sigmaAGoesEast}
We assume the notation of \fullcref{circ-orient}{init}, and let $\sigma_a^- = (\mf - 1, a - \mf + 1)$. Since $\sigma_b$ travels east, we know from \fullcref{circ-orient}{init} that $v(\sigma_b) < v(\term)$. So the desired conclusion follows from the observation that $v (\sigma_a^-) < v(\sigma_b)$ (because this implies $v(\sigma_a^-) < v(\term)$, so we can apply \fullcref{circ-orient}{init}).  Indeed, since $\iota E^{-1} = \term_+$, we see that if $\term_-$ is the northwesternmost square on~$S_a$, then $v(\term_-) = 1$. Therefore:
	\begin{itemize}
	\item If $a = b$, then $\sigma_a = \sigma_b$, so $v(\sigma_a^-) = v( \sigma_a N E^{-1} ) = v( \sigma_b N E^{-1} ) = v(\sigma_b) - 1$.
	\item If $a < b$, then $v(\sigma) < v(\sigma')$ for all $\sigma \in S_a$ and $\sigma' \in S_b$.
	\end{itemize}
\end{aid}

\begin{aid} \label{AEastBNorth}
We assume the notation of \fullcref{circ-orient}{init}, and let $\sigma_a^- = (\mf - 1, a - \mf + 1)$ and $\sigma_b^+ = (\mf + 1, b - \mf - 1)$. Since $\sigma_a$ travels east, we know from \fullcref{circ-orient}{init} that $v(\sigma_a) < v(\term)$. 

Since $\sigma_a$ travels east, we know from \fullcref{circ-orient}{init} that $v(\sigma_a) < v(\term)$. Therefore 
	$$v (\sigma_a^-) = v( \sigma_a N E^{-1} ) = v(\sigma_a) - 1 < v(\term) - 1 < v(\term) ,$$
so \fullcref{circ-orient}{init} tells us that $\sigma_a^-$ travels east.

Since $\sigma_b$ travels north, we know from \fullcref{circ-orient}{init} that $v(\sigma_b) > v(\term)$. Therefore 
	$$v (\sigma_b^+) = v( \sigma_b E N^{-1} ) = v(\sigma_b) + 1 > v(\term) + 1 > v(\term) ,$$
so \fullcref{circ-orient}{init} tells us that $\sigma_b^+$ travels north.
\end{aid}

\begin{aid} \label{TermDiagAndCycle}
Suppose $(x,y)$ is a square that is in both the terminal diagonal and the cycle $[(\mf,0)](N^{2n})$.
Since $(x,y)$ is in the terminal diagonal, we have $(x,y) \in S_a \cup S_b$.
	\begin{itemize}

	\item If $(x,y) \in S_a$, then $x + y = a$. But we also have $x \ge \mf$, since $(x,y)$ is in the cycle. Therefore
	\begin{align*}
	 0 &\le y = a - x \le a - \mm = (m + n - 3 - b) - \mm 
	\\&= m + n - 3 - (n - 1 + \mf) - \mm 
	= -1 < 0 
	. \end{align*}
	This is a contradiction.
	
	\item If $(x,y) \in S_b$, then $x + y = b$, so 
		$$ n - 1 \ge y = b - x = (n - 1 + \mf) - x .$$
	Therefore $x \ge \mf$. However, since $(x,y)$ is in the cycle, we also have $x \le \mf$. Therefore $x = \mf$ (and $(x,y) \in S_b$), so $(x,y) = \sigma_b$.
	
	\end{itemize}
\end{aid}

\begin{aid} \label{SigmaBAlsoNorth}
We assume the notation of \fullcref{circ-orient}{init}. Since $\sigma_a$ travels north, we know from \fullcref{circ-orient}{init} that $v(\sigma_a) > v(\term)$. Furthermore, since $\init E^{-1} = \term_+$, it is clear that $v(\sigma) < v(\sigma')$ for all $\sigma \in S_a$ and $\sigma' \in S_b$, so, in particular, $v(\sigma_a) < v(\sigma_b)$. Therefore $v(\sigma_b) > v(\term)$, so \fullcref{circ-orient}{init} tells us that $\sigma_b$ travels north.
\end{aid}

%

\begin{aid} \label{StretchInitialSquare}
Perhaps it is not obvious that $\tau_0 E$ is the initial square of~$\hampath$ if and only if $\bigl( \Delta_{i,j}^\square(\tau_0) \bigr) E$ is the initial square of~$\hampath'$. However, if $\tau_0 E$ is the initial square of~$\hampath$, then $\tau_0$ does not travel east, and $\tau_0 EN^{-1}$ does not travel north. So $\Delta_{i,j}^\square(\tau_0)$ does not travel east, and $\Delta_{i,j}^\square(\tau_0)EN^{-1} = \Delta_{i,j}^\square(\tau_0 EN^{-1})$ does not travel north. So the initial square of~$\hampath'$ is $\Delta_{i,j}^\square(\tau_0) E$. And conversely.
\end{aid}

\begin{aid} \label{LotsEast}
We assume the notation of \fullcref{circ-orient}{init}. 
Since $\sigma_b$ travels east, we know from \fullcref{circ-orient}{init} that $v(\sigma_b) < v(\term)$. 

Since $\init E^{-1} = \term_+$, it is clear that $v(\sigma) < v(\sigma')$ for all $\sigma \in S_a$ and $\sigma' \in S_b$. Therefore $v(\sigma) < v(\sigma_b) < v(\term)$, so \fullcref{circ-orient}{init} tells us that $\sigma$ travels north for every $\sigma \in S_a$.

Also, since $(\mf - 1, b - \mf + 1) = \sigma_b N E^{-1}$, we have $v \bigl( (\mf - 1, b - \mf + 1) \bigr) = v(\sigma_b) - 1 < v(\sigma_b) < v(\term)$, so \fullcref{circ-orient}{init} tells us that $(\mf - 1, b - \mf + 1)$ travels north.
\end{aid}

\begin{aid} \label{tauOnSa}
We assume the notation of \fullcref{circ-orient}{init}. Note that $\init E^{-1} = (m-2,0)E^{-1} = (m-3,0)$ is the southeasternmost square on~$S_a$, so 
	\begin{align*}
	S_a 
	&= \{\,(m-3,0)(NE^{-1})^i \mid 0 \le i \le a \,\} 
	\\&= \{\,(m-3,0)(EN^{-1})^j \mid n - a \le j \le n \,\} 
	\\&= \{\, \sigma \in S_a \cup S_b \mid v(\sigma) \ge n - a \,\}
	. \end{align*}

The northwesternmost square on $S_a$ is $(0,a) = (0,n - 1)$, which does not travel north, so \fullcref{circ-orient}{init} tells us that $v(\term) \ge v \bigl( (0,a) \bigr) = n - a$. Hence, we have $\term \in S_a$.

For all $\sigma \in S_b$, we have $\sigma \notin S_a$, so $v(\sigma) < n - a \le v(\term)$, so \fullcref{circ-orient}{init} tells us that $\sigma$ travels east.
\end{aid}

\begin{aid} \label{AlsoNorth}
We assume the notation of \fullcref{circ-orient}{init}. 
Since $(\nf,\nf)$ travels north, \fullcref{circ-orient}{init} tells us that $v \bigl( (\nf,\nf) \bigr) > v(\term)$.
Then, since $(\nf + 1,\nf - 1) = (\nf,\nf)E N^{-1}$, we have 
	$$v \bigl( (\nf + 1,\nf - 1) \bigr) = v \bigl( (\nf,\nf) \bigr) + 1 >  v \bigl( (\nf,\nf) \bigr) > v(\term) .$$
So \fullcref{circ-orient}{init} tells us that $(\nf + 1,\nf - 1)$ travels north.
\end{aid}

\begin{aid} \label{CantEast}
We assume the notation of \fullcref{circ-orient}{init}. 
Suppose $(\nf,\nm)$ travels east. Then \fullcref{circ-orient}{init} tells us that $v \bigl( (\nf,\nm) \bigr) < v(\term)$.
Also, since $(\nf,\nm) = (\nm,\nf)E N^{-1}$, we have $v \bigl( (\nf,\nm) \bigr) = v \bigl( (\nm,\nf) \bigr) + 1$. 
Therefore
	$$v \bigl( (\nm,\nf) \bigr) = v \bigl( (\nf,\nm) \bigr) - 1 <  v \bigl( (\nf,\nm) \bigr) < v(\term) .$$
So \fullcref{circ-orient}{init} tells us that $(\nm,\nf)$ travels east. 

Furthermore, 
	$$ S_b
	= \{\, (x,  n - x) \mid 1 \le x \le n \,\}
	= \{\, \init E^{-1}(EN^{-1})^k \mid 1 \le k \le n \,\} ,$$
where $\init = (m-2,0)$ is the initial square. 
Therefore $v(\sigma_b) < v(\sigma_a)$ for all $\sigma_b \in S_b$ and $\sigma_a \in S_a$. Since 
	$$ \text{$v \bigl( (\nf,\nm) \bigr) < v(\term)$,
	\ and \ 
	$(\nf,\nm) \in S_a$} ,$$
we conclude that $v(\sigma_b) < v(\term)$, so \fullcref{circ-orient}{init} tells us that every square in~$S_b$ travels east. In particular, $(\nf,\nf)$ and $(\nm, \nf+1)$ do not all travel east.

This contradicts the fact that $(\nf,\nm)$, $(\nf,\nf)$, $(\nm,\nf)$, and $(\nm, \nf+1)$ do not all travel east.
\end{aid}

\begin{aid} \label{StretchReverse}
\textbf{Proof of \cref{stretch}$({\Leftarrow})$.}
We need to construct a hamiltonian path~$\hampath$ from $(p,q)$ to $(x,y)$. (For convenience, we will call a diagonal of~$\Bmn$ \emph{inner} if it would be inner with respect to such a hamiltonian path.)
From \pref{stretch-bound}, we know that $e$ is no more than the number of rowful inner subdiagonals, so we may choose a set~$\mathcal{E}$ of $\lceil e/2 \rceil$ rowful inner diagonals. Furthermore:
	\begin{enumerate}
	\item If $e$~is even, we choose each diagonal in~$\mathcal{E}$ to be the union of two distinct subdiagonals.
	\item If $e$~is odd, then \pref{stretch-even} tells us that $m + n$ is also odd, so we may choose $\mathcal{E}$ to contain the diagonal $S_{(m + n - 3)/2}$ that consists of only one subdiagonal.
	\end{enumerate}
Then the diagonals in~$\mathcal{E}$ constitute precisely~$e$ subdiagonals.

Now, applying \cref{IgnoreEast}$({\Leftarrow})$ (repeatedly) to the $\lceil e/2 \rceil$ rowful inner diagonals in~$\mathcal{E}$ and to all $o$~rowful outer diagonals of~$\Bmn$ yields a hamiltonian path~$\hampath$ from $(p,q)$ to~$(x,y)$ in~$\Bmn$. 

Note that all rowful inner diagonals travel north in~$\hampath'$. (Namely, either all non-terminal diagonals travel north, or $m' = n$, in which case, there are no rowful inner diagonals, so the claim is vacuously true.) Therefore, the diagonals in~$\mathcal{E}$ are the only rowful inner diagonals that travel east in~$\hampath$. So exactly~$e$ rowful inner subdiagonals travel east, as desired.
\end{aid}

\begin{aid} \label{TauE}
Write $\term_+ = (x,y)$. Since, by definition, $\term_+$ is the southeasternmost square on~$S_b$, we know that $(x + 1, y - 1)$ is not a square of~$\Bmn$. Therefore, either $x + 1 > m-1$ or $y - 1 < 0$. So either $x = m - 1$ or $y = 0$.
	\begin{itemize}
	
	\item If $x = m - 1$, then $\term_+ E$ is of the form $(0,q)$. The terminal square cannot be $(m-1,n-1)$ \csee{00NotInit}, so $y \neq n - 1$. Therefore $q \neq 0$.
	
	\item If $y = 0$ and $x \neq m - 1$, then $\term_+ E = (x,0)E = (x + 1, 0)$ is of the form $(p,0)$. Also, since $\term_+ \in S_b$, we have $x = b \ge \lceil (m + n - 3)/2 \rceil$, so $p = x + 1 \ge \lceil (m + n - 1)/2 \rceil$. (Furthermore, we have $m \neq n$, for otherwise $\lceil (m + n - 1)/2 \rceil = \lceil (2m - 1)/2 \rceil = m$, so it is impossible to have both $p \le m - 1$ and $p \ge \lceil (m + n - 1)/2 \rceil$.)
	
	\end{itemize}
\end{aid}

\begin{aid} \label{TermFor0qReverse}
\textbf{Proof of \cref{TermFor0q}$({\Leftarrow})$}
We assume the notation of \cref{stretch} (with $p = 0$, because the initial square is $(0,q)$).

\pref{TermFor0q-a} Since $q \le n - 1$ and $x + y = q - 1$, we have $a = q - 1 = x + y$, $o = 0$, and $e_1 = e_2 = 0$. 
Also, since 
	$$ \text{$\lfloor (m - 0 - 1)/2 \rfloor = \mm \ge x$
	\ and \ 
	$\lfloor (m - (m-n) - 1)/2 \rfloor = \nm \le x$}
	,$$
there exists $e \in \{0,1,\ldots,m-n\}$, such that $\lfloor (m - e - 1)/2 \rfloor = x$. 
(That is, $\lfloor (m' - 1)/2 \rfloor = x$.)
Furthermore, we may assume $e$~is even if $m + n$~is even (because the two extremes $0$ and~$m - n$ are even in this case). Then \cref{UsualEndpt} provides a hamiltonian path~$\hampath'$ in~$\board{m'}{n}$ from $(0,q)$ to $(x,y)$, such that all non-terminal diagonals travel north. So \cref{stretch} yields a hamiltonian path from $(0,q)$ to $(x,y)$ in~$\Bmn$.

\pref{TermFor0q-b} 
We have $x + y = b$, $o = 0$, $e_1 = 0$, and $e_2 = e$. 
Also, since 
	$$ \text{$\lfloor (m + 0)/2 \rfloor = \mf \le x$
	\ and \ 
	$\lfloor (m + (m-n))/2 \rfloor = m - \np \ge x$}
	,$$
there exists $e \in \{0,1,\ldots,m-n\}$, such that $\lfloor (m + e)/2 \rfloor = x$. Furthermore, we may assume $e$~is even if $m + n$~is even. Then \cref{UsualEndpt} provides a hamiltonian path~$\hampath'$ in~$\board{m'}{n}$ from $(0,q)$ to $\bigl( \lfloor (m - e)/2 \rfloor, y \bigr) = (x - e,y)$, such that all non-terminal diagonals travel north. So \cref{stretch} yields a hamiltonian path from $(0,q)$ to $(x,y)$ in~$\Bmn$.
\end{aid}

\begin{aid} \label{LargestE}
As was already mentioned in the proof of \fullcref{stretch}{bound}, the number of rowful inner subdiagonals is
	$\max \bigl( \min(m - n, b - a - 1), 0 \bigr)$.
We have
	\begin{align*}
	 b - a - 1 
	 &= (m + n - q - 2) - (q - 1) - 1 
	 = m + n - 2q - 1 
	 \\&\ge m + n - 2(n-1) - 1 
	 = m - n + 1 > m - n 
	 , \end{align*}
so the number of rowful inner subdiagonals is $m - n$. Therefore, the largest possible value of~$e$ is $m - n$. (Note that the requirement in \fullcref{stretch}{even} does not cause a problem, because $m - n$ is even if $m + n$ is even.)
\end{aid}

\begin{aid} \label{TermForp0SmallReverse}
\textbf{Proof of \cref{TermForp0Small}$({\Leftarrow})$}
We assume the notation of \cref{stretch} (with $q = 0$, because the initial square is $(p,0)$). Note that $a = p - 1$, so $e_1 = 0$.

\pref{TermForp0Small-y=nm}
Assume, for the moment, that $p \le n - 1$, so $o = e_2 = 0$.
\Cref{UsualEndpt} provides a hamiltonian path~$\hampath'$ from $(0,p)$ to $(\nm, p - 1 - \nm) = (y,x)$ in~$\board{n}{n}$. The transpose $(\hampath')^*$ is a hamiltonian path from $(p,0)$ to $(x,y)$. Letting $e = m - n$, \cref{stretch} yields a hamiltonian path from $(p,0)$ to $(x,y)$ in~$\Bmn$.

We may now assume that $p \ge n$. We have $a = p - 1$, $o = p - n$, and $e_1 = 0$.
\Cref{init=tauplus} provides a hamiltonian path~$\hampath'$ from $(n,0)$ to $(\nf, \nm)$ in~$\board{n+2}{n}$. Letting $e = m - n$, \cref{stretch} yields a hamiltonian path from $(n + o,0) = (p,0)$ to $(\nf + o, \nm) = (x,y)$ in~$\Bmn$.

\pref{TermForp0Small-y=nf}
We have $e_2 = e$ and also $o = 0$ (since $p \le n - 1$). 
\Cref{UsualEndpt} provides a hamiltonian path~$\hampath'$ from $(0,p)$ to $(\nf, 2n - p -  2 - \nf) = (y, 2n - p -  2 - y)$ in~$\board{n}{n}$. The transpose $(\hampath')^*$ is a hamiltonian path from $(p,0)$ to $(2n - p -  2 - y,y)$. Letting $e = m - n$, \cref{stretch} yields a hamiltonian path from $(p,0)$ to $(x,y)$.
\end{aid}

\begin{aid} \label{TermForp0SmallPf-nm}
We have
	$$ p = m + n - 2 - x - y \ge m + n - 2 - (m-1) - \nf = n - 1 - \nf = \nm .$$
\end{aid}

\begin{aid} \label{TermForp0SmallPf-m'}
Since $a' + b' = m' + n - 3$, we have
	$$ m' = a' + b' + 3 - n = (n-1) + n + 3 - n = n + 2 .$$
\end{aid}

\begin{aid} \label{TermForp0LargeReverse}
\textbf{Proof of \cref{TermForp0Large}$({\Leftarrow})$}
We assume the notation of \cref{stretch} (with $q = 0$, because the initial square is $(p,0)$). Note that $b = p - 1 \ge \lceil (m + n - 3)/2 \rceil$, so $e_1 = e$ and $o = a - n + 1$. 
Also, we have $b - a - 1 = 2p - m -n$.

\pref{TermForp0Large-a}
We have $x + y = a$, so $e_2 = 0$.
Since
	$$ \lfloor (m - 0 - 1)/2 \rfloor = \mm \ge x$$
and
	$$\lfloor (m - (2p-m-n) - 1)/2 \rfloor = m - p + \nm \le x ,$$
there is some $e$, such that $0 \le e \le 2p - m -n$ and $\lfloor (m - e - 1)/2 \rfloor = x$.  Furthermore, we may assume $e$~is even if $m + n$~is even. 
\cref{UsualEndpt} provides a hamiltonian path~$\hampath'$ in~$\board{m'}{n}$ from $(p - o - e,0)$ to $( \lfloor (m' - 1)/2 \rfloor, y' )$, for some~$y'$, such that the terminal square is on the lower subdiagonal of the terminal diagonal. So \cref{stretch} yields a hamiltonian path from $(p,0)$ to $(\lfloor (m' - 1)/2 \rfloor + o,y') = (x,y')$ in~$\Bmn$. Since $(x,y')$ is on the lower subdiagonal~$S_a$ of the terminal diagonal, we must have $y' = y$.
 
\pref{TermForp0Large-b}
We have $x + y = b$, so $e_2 = e$. Since
	$$ \lfloor (m + 0)/2 \rfloor = \mf \le x$$
and 
	$$\lfloor \bigl( m + (b - a - 1) \bigr)/2 \rfloor 
	= \lfloor \bigl( m + (2p - m - n) \bigr)/2 \rfloor
	= p - \np
	\ge x, $$
there is some $e$, such that $0 \le e \le b - a + 1$ and $\lfloor (m + e)/2 \rfloor = x$.  Furthermore, we may assume $e$~is even if $m + n$~is even. 
 \Cref{UsualEndpt} provides a hamiltonian path~$\hampath'$ in~$\board{m'}{n}$ from $(p - o - e,0)$ to $( \lfloor m'/2 \rfloor, y' )$, for some~$y'$, such that the terminal square is on the upper subdiagonal of the terminal diagonal. So \cref{stretch} yields a hamiltonian path from $(p,0)$ to $(\lfloor (m' - 1)/2 \rfloor + o,y')$ in~$\Bmn$. Since $(x,y')$ is on the upper subdiagonal~$S_b$ of the terminal diagonal, we must have $y' = y$.

\pref{TermForp0Large-special}
Since $b = p - 1 = (m + n - 3)/2$, we have $a = b$. Let $e = 0$. 
 \Cref{UsualEndpt} provides a hamiltonian path~$\hampath'$ in~$\board{m'}{n}$ from $(p - o,0)$ to $( \lfloor (m'-1)/2 \rfloor, y' )$, for some~$y'$, such that $( \lfloor (m'-1)/2 \rfloor, y' )$ is on the terminal diagonal~$S_{b-o}$. So \cref{stretch} yields a hamiltonian path from $(p,0)$ to $(\lfloor (m' - 1)/2 \rfloor + o,y') = (\mm, y') = (x,y')$ in~$\Bmn$. Since $(x,y')$ must be on the terminal diagonal~$S_b$, we have $y' = y$.
\end{aid}

\begin{aid} \label{TermForp0LargePf-e}
The largest possible value of~$e$ is
		\begin{align*}
		\max \bigl( \min(m - n , b - a - 1) , 0 \bigr)
		&= \max(  b - a - 1 , 0 )
		\\&= \max \bigl(  (p-1) - (m + n - p - 2) - 1, 0 \bigr)
		\\&= \max ( 2p - m - n , 0 )
		. \end{align*} 
If $2p - m - n < 0$, then, since $p \ge \lceil (m + n - 1)/2 \rceil$, we must have $p = (m + n - 1)/2 = \lceil (m + n - 1)/2 \rceil$. Since  $(m + n - 1)/2 = \lceil (m + n - 1)/2 \rceil$, we know that $m + n$ is odd.
\end{aid}

\begin{aid} \label{TermForp0Small-y=nm-UpperBound}
	If $n$~is even, then $\np = n/2$, so 
		$$ \left\lfloor \frac{m + n}{2} \right\rfloor - 1 - \np = \left\lfloor \frac{m}{2} \right\rfloor - 1 = \mf - 1 .$$
	If $n$~is odd, then $\np = (n+1)/2$, so 
		$$ \left\lfloor \frac{m + n}{2} \right\rfloor - 1 - \np = \left\lfloor \frac{m - 1}{2} \right\rfloor - 1 = \mm - 1 
		= \begin{cases}
		\mf - 1 & \text{if $m$~is odd}, \\
		\mf - 2 & \text{if $m$~is even}
		. \end{cases} $$
\end{aid}

\begin{aid} \label{TermSquaresTopLowerBound}
We have
	\begin{align*}
	\left\lfloor \frac{m + n}{2} \right\rfloor - 1
	&\le \frac{m}{2} + \frac{n}{2} - 1
	= \frac{m}{2} + \frac{n - 1}{2} - \frac{1}{2}
	\\&\le \left( \mf + \frac{1}{2} \right) + \left( \nm + \frac{1}{2} \right) - \frac{1}{2}
	= \mf + \nm + \frac{1}{2} 
	= x + y + \frac{1}{2} 
	. \end{align*}
Since $\lfloor (m + n)/2 \rfloor - 1$ and $x + y$ are integers, this implies 
	$$\lfloor (m + n)/2 \rfloor - 1 \le x + y .$$
\end{aid}

\begin{aid} \label{n=1Pf}
Suppose a hamiltonian path~$\hampath$ in~$\board{m}{1}$ uses the edge $[\sigma](N)$ (and $\sigma N \neq \sigma E$). Then $\hampath$ uses neither $[\sigma](E)$ nor $[\sigma NE^{-1}](E)$. But removing these two directed edges disconnects the digraph, so no hamiltonian path can avoid both of these directed edges. This is a contradiction.
\\[\medskipamount]\centerline{\includegraphics{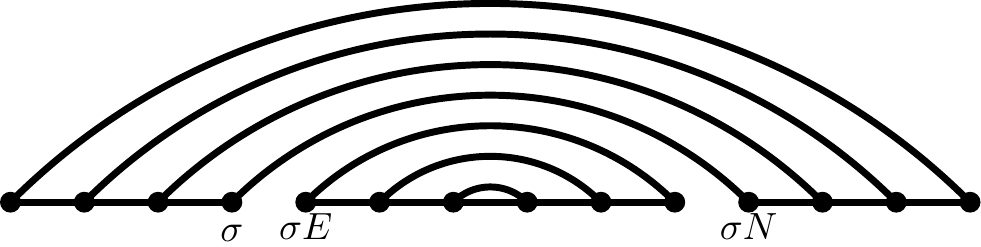}}
%
%
%
%
%
\end{aid}

\begin{aid} \label{n=2Pf}
An exhaustive search (by computer or by hand) tells us that the $13$ hamiltonian paths in \cref{n=2Fig} (on \cpageref{n=2Fig}) are the only hamiltonian paths in~$\board{m}{2}$ for which:
	\begin{itemize}
	\item $m \ge n = 2$,
	\item all non-terminal diagonals travel north,
	and
	\item $\term E \neq \init$, where $\tau$ is the terminal square and $\init$ is the initial square.
	\end{itemize}

\begin{figure}[b]
\hrule\bigskip
\begin{tabular}{ccc}
\raise 1cm\hbox{1.} \includegraphics{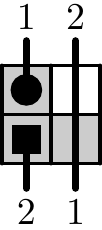} &
\raise 1cm\hbox{2.} \includegraphics{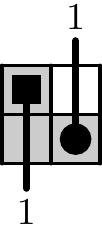} &
\raise 1cm\hbox{3.} \includegraphics{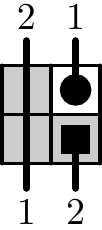} \\
\raise 1cm\hbox{4.} \includegraphics{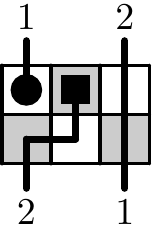} &
\raise 1cm\hbox{5.} \includegraphics{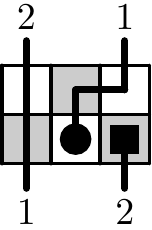} &
\raise 1cm\hbox{6.} \includegraphics{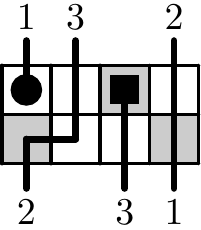} \\
\raise 1cm\hbox{7.} \includegraphics{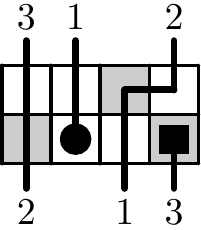} &
\raise 1cm\hbox{8.} \includegraphics{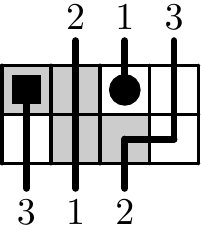} &
\raise 1cm\hbox{9.} \includegraphics{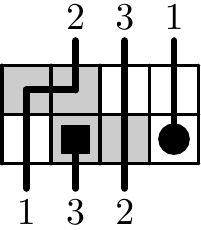} \\
\raise 1cm\hbox{10.} \includegraphics{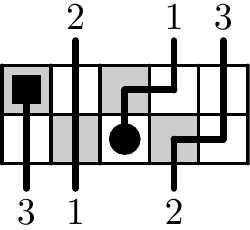} &
\raise 1cm\hbox{11.} \includegraphics{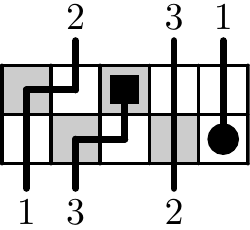} &
\quad\raise 1cm\hbox{12.} \includegraphics{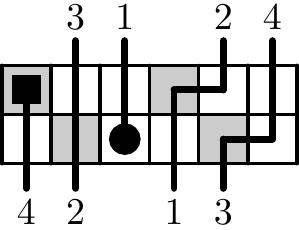}\quad \\
\raise 1cm\hbox{13.} \includegraphics{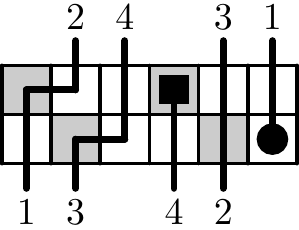} 
\end{tabular}
\caption{Hamiltonian paths in $\board{m}{2}$ in which all non-terminal diagonals travel north and $\term E \neq \init$.}
\label{n=2Fig}
\end{figure}

The only non-rowful diagonals of $\board{m}{2}$ are $S_0 \cup S_{m-1}$ and~$S_m$. Neither of these can be an inner diagonal, so (for $n = 2$) the conclusion of \fullcref{stretch}{HP} can be modified to say that either $(x,y)E = (p,q)$, or $\hampath'$ is one of the hamiltonian paths in \cref{n=2Fig}. For each of the 13 possibilities for~$\hampath'$, here are the initial square and terminal of the corresponding hamiltonian paths in $\board{m}{2}$ that are obtained by applying (the adapted version of) \cref{stretch}:

\begin{enumerate}

\item 
 From $(0,1)$ to $(0,0)$ for $m \ge 2$.

\item 
From $(1,0)$ to $(m-2,1)$ for $m \ge 2$.

\item 
From $(m-1,1)$ to $(m-1,0)$ for $m \ge 2$.

\item \label{HP4} 
From $(0,1)$ to $(m-2,1)$ for odd $m \ge 3$.

\item 
From $(1,0)$ to $(m-1,0)$ for odd $m \ge 3$.

\item \label{HP6} 
From $(0,1)$ to $(m-2,1)$ for $m \ge 4$.

\item 
From $(1,0)$ to $(m-1,0)$ for $m \ge 4$.

\item 
From $(p,1)$ to $(m - 2 - p,1)$ for $m \ge 4$ and  $\mp \le p \le m-2$. 

\item 
From $(p,0)$ to $(m-p,0)$ for $m \ge 4$ and $\mp + 1 \le p \le m - 1$.

\item \label{HP10} 
From $(p,0)$ to $(p-2,1)$ for odd $m \ge 5$ and $2 \le p \le \mm$.

\item \label{HP11} 
From $(p,0)$ to $(p-2,1)$ for odd $m \ge 5$ and $\mf + 2 \le p \le m-1$.

\item \label{HP12} 
From $(p,0)$ to $(p-2,1)$ for $m \ge 6$ and $2 \le p \le \mm$. 

\item \label{HP13} 
From $(p,0)$ to $(p-2,1)$ for $m \ge 6$ and $\mp + 2 \le p \le m-1$. 

\end{enumerate}

These yield the possibilities listed in the table of \fullcref{n=1or2}{2}.  Note:
	\begin{itemize}
	\item \pref{HP4} and \pref{HP6} are combined into row E of the table.
	
	\item \pref{HP10}, \pref{HP11}, \pref{HP12}, and \pref{HP13} are combined into row J of the table.
	\end{itemize}
\end{aid}

\begin{aid} \label{CurranScan}
A PDF scan of \cite{Curran} is available online at 
\\ \url{http://people.uleth.ca/~dave.morris/papers/CurranWitte.pdf}
\end{aid}

\begin{aid} \label{ForbushScan}
A PDF scan of \cite{Forbush} is available online at 
\\ \url{http://people.uleth.ca/~dave.morris/papers/ProjCbds.pdf}
\end{aid}

\end{appendix}

}

\end{document}